	\patchcmd{\section}{\scshape}{\scshape\bfseries}{}{}
	\renewcommand{\@secnumfont}{\scshape\bfseries}
	\let\epsilon\varepsilon
	\newtheorem{theorem}{Theorem}
	\newtheorem{corollary}[theorem]{Corollary}
	\newtheorem{lemma}[theorem]{Lemma}
	\newtheorem{proposition}[theorem]{Proposition}
	\numberwithin{theorem}{section}
	\theoremstyle{plain}
	\newtheorem{result}{Theorem}
	\theoremstyle{definition}
	\newtheorem*{definition}{Definition}
	\theoremstyle{remark}
	\newtheorem{remark}[theorem]{Remark}
	\numberwithin{equation}{section}
	\numberwithin{table}{section}
	\def\blfootnote{\gdef\@thefnmark{}\@footnotetext}
	\DeclareMathOperator{\codim}{codim}
	\DeclareMathOperator{\diag}{diag}
	\DeclareMathOperator{\Ric}{Ric}
	\newcommand{\inj}{\hookrightarrow}
	\newcommand{\of}[1]{\left( #1 \right)}
	\newcommand{\floor}[1]{\left\lfloor #1 \right\rfloor}
	\newcommand{\pr}[1]{\Ric_{#1} > 0}
	\newcommand{\gG}{\mathsf{G}}
	\newcommand{\gH}{\mathsf{H}}
	\newcommand{\gO}{\mathsf{O}}
	\newcommand{\gS}{\mathsf{S}}
	\newcommand{\SO}{\mathsf{SO}}
	\newcommand{\Spin}{\mathsf{Spin}}
	\newcommand{\SU}{\mathsf{SU}}
	\newcommand{\gT}{\mathsf{T}}
	\newcommand{\gU}{\mathsf{U}}
	\newcommand{\RP}{\mathbb{R}\mathrm{P}}
	\newcommand{\CP}{\mathbb{C}\mathrm{P}}
	\newcommand{\mCP}[1]{\overline{\mathbb{C}\mathrm{P}^{#1}}}
	\newcommand{\HP}{\mathbb{H}\mathrm{P}}
	\newcommand{\OP}{\mathbb{O}\mathrm{P}^2}
	\newcommand{\bR}{\mathbb{R}}
	\newcommand{\bZ}{\mathbb{Z}}
	\newcommand{\ep}{\epsilon}
	\DeclareMathSymbol{\lsb@l}{\mathalpha}{letters}{`l}
	\newcommand*{\defeq}{\mathrel{\vcenter{\baselineskip0.5ex \lineskiplimit0pt
				\hbox{\scriptsize.}\hbox{\scriptsize.}}}%
		=}
\providecommand{\noopsort}[1]{} %year = "unpublished manuscript\setbox0=\hbox{2003}"
\title[$\pr{k}$ with maximal symmetry rank]{Positive intermediate Ricci curvature with maximal symmetry rank}
\date{\today}
\author{Lee Kennard}
\address{Department of Mathematics \\ Syracuse University \\ Syracuse, NY USA.}
\email{ltkennar@syr.edu}
\author{Lawrence Mouill\'e}
\address{Department of Mathematics \\ Syracuse University \\ Syracuse, NY USA.}
\email{lawrence.mouille@gmail.com}
\begin{document}

\begin{abstract}
	Generalizing the foundational work of Grove and Searle, the second author proved upper bounds on the ranks of isometry groups of closed Riemannian manifolds with positive intermediate Ricci curvature and established some topological rigidity results in the case of maximal symmetry rank and positive second intermediate Ricci curvature. 
	Here, we recover even stronger topological rigidity, including results for higher intermediate Ricci curvatures and for manifolds with nontrivial fundamental groups. 
\end{abstract}

\maketitle

\blfootnote{\textup{2020} \textit{Mathematics Subject Classification}: 53C20 (Primary), 57S15 (Secondary)}

\section{Introduction}
	
	The study of manifolds with lower curvature bounds goes back to the origins of Riemannian geometry.
	In dimensions at least $3$, there are many notions of lower curvature bounds, two of the most common being positive sectional curvature and positive Ricci curvature. 
	For sectional curvature, the classification of positively curved manifolds is a wide open problem.
	However, apart from the compact rank one symmetric spaces $S^n$, $\CP^n$, $\HP^n$, $\OP$, the only dimensions that are known to admit other closed, simply connected manifolds with positive sectional curvature are $6$, $7$, $12$, $13$, and $24$; see \cite{Ziller06}.
	In contrast, for the weaker condition of positive Ricci curvature, there are far more known examples, including sequences of examples in any fixed dimension $\geq 4$ with unbounded total Betti numbers \cite{ShaYang91}; see also \cite{Wraith07,Burdick19,Burdick20,Reiser23,Reiser22preprint}.
	
	For positive sectional curvature, the Grove Symmetry Program has resulted in major classifications such as those in  \cite{FangGroveThorbergsson17, FangRong05, GorodskiKollrossWilking21preprint, GroveSearle94, GroveSearle97, GroveWilking14, GroveWilkingZiller08, HsiangKleiner89, Rong02, Wilking03, Wilking06, VerdianiZiller18}, constructions of new examples of manifolds with lower curvature bounds as in \cite{Dearricott11, GoetteKerinShankar20, GoetteKerinShankar21, GroveVerdianiZiller11, PetersenWilhelm08preprint}, and discoveries of unexpected and fundamental connections between curvature and topology \cite{Shankar98, Wilking07}.
	The overarching goal of this program, initiated by Karsten Grove in the 1990s, is to classify positively curved spaces with large isometry groups.
	A foundational result in the program is Grove and Searle's maximal symmetry rank theorem: 
	Any closed, connected, $n$-dimensional manifold with positive sectional curvature has symmetry rank (i.e. rank of the isometry group) bounded above by $\lfloor \frac{n+1}{2} \rfloor$, and in the case of equality (i.e. maximal symmetry rank), the manifold is diffeomorphic to $S^n$, $\bR\mathrm{P}^n$, $\CP^{n/2}$, or a lens space \cite{GroveSearle94}.
	Galaz-Garc\'ia later strengthened this conclusion to an equivariant diffeomorphism classification \cite{GalazGarcia14}.
	
	Because of the success of the Grove Symmetry Program, it is natural to ask which obstructions to positive sectional curvature generalize to weaker curvature conditions (e.g. positive Ricci curvature, non-negative sectional curvature, quasipositive curvature, almost positive curvature).
	However, it is typical that tools from the positive sectional curvature setting do not carry over to weaker curvature conditions.
	One exception was established by the second author for positive intermediate Ricci curvature.
	
	\begin{definition}
		Given an $n$-dimensional Riemannian manifold $M$ and $k \in \{ 1 , \dots , n - 1 \}$, we say $M$ has \textit{positive $k^{th}$-intermediate Ricci curvature ($Ric_k > 0$)} if, for every set of orthonormal vectors $x,y_1,\dots,y_k$ tangent to $M$, the sum of sectional curvatures $\sum_i \sec(x,y_i)$ is positive.\footnote{This notion of positive intermediate Ricci curvature should not be confused with $k$-positive Ricci curvature as defined in \cite{Wolfson11}; see also \cite{CrowleyWraith20preprint,WalshWraith22}.}
	\end{definition}
	
	Note $M$ has $\pr{k}$ if and only if, for every unit vector $x$ tangent to $M$, the sum of any $k+1$ eigenvalues of the Jacobi (directional curvature) operator $y \mapsto R(y,x)x$ is positive; see \cite[Lemma 1.2]{ReiserWraith22preprint2}.
	Thus $\pr{1}$ is equivalent to positive sectional curvature, $\pr{n-1}$ is equivalent to positive Ricci curvature, and if $\pr{k}$ for some $k$, then $\pr{l}$ for all $l\geq k$.
	Because the condition $\pr{k}$ is vacuous for dimensions $n \leq k$, we use the convention that an assumption of $\pr{k}$ implies $n \geq k+1$.
	
	Several results for manifolds with sectional curvature lower bounds have been extended to intermediate Ricci curvature lower bounds.
	These include generalizations of the Synge theorem and Weinstein fixed point theorem \cite{Wilhelm97}, the Gromoll-Meyer theorem and Cheeger-Gromoll Soul theorem \cite{Shen93,GuijarroWilhelm20}, the quarter-pinched sphere theorem \cite{Shen90,Xia97,GuijarroWilhelm22}, and the Heintze-Karcher inequality \cite{Chahine19}.
	Also, comparison results have been established by Guijarro and Wilhelm \cite{GuijarroWilhelm18,GuijarroWilhelm22,GuijarroWilhelm20}, and examples and constructions can be found in \cite{AmannQuastZarei20preprint,DVGAM22,ReiserWraith23,ReiserWraith23preprint1,ReiserWraith23preprint2,ReiserWraith22preprint2}.
	For a collection of publications and preprints concerning intermediate Ricci curvature, see \cite{MouilleWebpage}.
	
	In \cite{Mouille22a,Mouille22b}, the second author shows that closed, connected, $n$-dimensional manifolds with $\pr{2}$ have symmetry rank bounded above by $\lfloor \frac{n+1}{2} \rfloor$, the same bound Grove and Searle established for positive sectional curvature. 
%	Furthermore, the author showed in \cite{Mouille22b} that odd-dimensional, closed, simply connected manifolds with $\pr{2}$ and maximal symmetry rank are diffeomorphic to spheres.
{
Furthermore, the second author proved the following rigidity statement in the odd-dimensional case:% (see ):

\begin{theorem}[\cite{Mouille22b}]\label{thm:odd-dim}
If $\gT^n$ acts effectively by isometries on a closed, simply connected Riemannian $(2n-1)$-manifold $M$ with $\pr2$, then $M$ is diffeomorphic to $S^{2n-1}$.
\end{theorem}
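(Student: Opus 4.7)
My plan is to proceed by induction on $n \geq 2$. For the base case $n = 2$, the manifold $M^3$ is closed and simply connected, and on a $3$-manifold $\pr{2}$ coincides with positive Ricci curvature; Hamilton's theorem then forces $M \cong S^3$.

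For the inductive step in dimension $2n - 1$, I would imitate the strategy of Grove-Searle, substituting the $\pr{2}$ analogs of their sectional curvature inputs. The first step is to produce a circle subgroup $S^1 \subset \gT^n$ whose fixed-point set contains a component $N$ of codimension exactly $2$ in $M$. I would extract this from the same isotropy-lattice and slice-theoretic machinery the second author used in \cite{Mouille22a,Mouille22b} to establish the upper bound $\lfloor (n+1)/2 \rfloor$ on the symmetry rank: at the maximal rank, the combinatorial constraints on the weights of the isotropy representation at a principal point, together with the $\pr{2}$ obstruction, should force some isotropy circle to fix a submanifold of codimension at most $2$.

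Next I would verify that $N$ inherits the hypotheses of the inductive statement in dimension $2n - 3$. Since $N$ is a fixed-point component of an isometry, it is totally geodesic, and the Gauss equation transfers $\pr{2}$ to $N$. The quotient torus $\gT^{n-1} = \gT^n / S^1$ acts isometrically on $N^{2n-3}$, and this action is effective because a nontrivial element of its kernel would act trivially on a tubular neighborhood of $N$ by the slice theorem, contradicting effectiveness on $M$. To establish $\pi_1(N) = 1$, I would apply a connectedness principle for totally geodesic submanifolds under positive intermediate Ricci curvature, in the spirit of Wilhelm's generalized Synge-Weinstein theorem \cite{Wilhelm97} and the comparison results of Guijarro-Wilhelm \cite{GuijarroWilhelm18,GuijarroWilhelm20}, to the codimension-$2$ inclusion $N \hookrightarrow M$, which should be at least $1$-connected in our range and thus transfer simple connectivity from $M$ to $N$. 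The induction hypothesis then yields $N \cong S^{2n-3}$.

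To finish, the triple $(M, S^1, N)$ exhibits $M$ as $S^1$-fixed-point-homogeneous; the standard equivariant disk-bundle decomposition writes $M$ as the union of a tubular neighborhood of $N$ (a $D^2$-bundle over $N \cong S^{2n-3}$) and a complementary piece determined by the remaining fixed-point data, and in the simply connected odd-dimensional case this gluing reconstructs $S^{2n-1}$. The main obstacle I anticipate is the very first step: Grove-Searle's production of a codimension-$2$ fixed-point component rests on Berger's lemma about zeros of Killing fields, which requires positivity of every individual sectional curvature. Under $\pr{2}$ one controls only sums of two sectional curvatures, so extracting the same codimension bound demands the refined Jacobi-operator analysis developed in \cite{Mouille22b} as a replacement for Berger's lemma; establishing (iii) above is a secondary concern that relies on having a strong enough intermediate-Ricci connectedness theorem to close the induction cleanly.
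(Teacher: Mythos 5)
Your overall framework (induct on $n$, find a codimension-two fixed-point component $N$, push the hypotheses down to $N$, close the induction) is a reasonable imitation of Grove--Searle, and the first three steps are essentially sound: the codimension-two component is produced exactly as you suggest, the Connectedness Lemma (Theorem~\ref{thm:connprinc}) gives $\pi_1(N)=1$, and effectiveness of the residual torus on $N$ follows from maximality of $\dim N$. However, your final step contains a genuine gap, and it is a different gap from the one you flag. The ``standard equivariant disk-bundle decomposition'' you invoke to reassemble $M$ from the tubular neighborhood of $N$ is not a formal consequence of $N$ having codimension two; in Grove--Searle it comes from the fact that positive sectional curvature on $M$ descends to a positive Alexandrov curvature lower bound on the orbit space $M/\gS^1$, whose boundary is $N$, and the soul theorem for that orbit space is what supplies the second disk bundle. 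Under $\pr 2$ the orbit space has no known Alexandrov curvature lower bound, so there is no soul, no second disk bundle, and no fixed-point-homogeneous decomposition to speak of. This is precisely why the paper remarks that its methods differ from Grove--Searle and why only diffeomorphism (not equivariant diffeomorphism) is obtained.

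The route actually taken (in \cite{Mouille22b}, cited here as Theorem~\ref{thm:odd-dim}) is cohomological rather than decomposition-theoretic: after producing the codimension-two component $N$, apply the Connectedness Lemma to get $N\hookrightarrow M$ highly connected, then the Periodicity Lemma~\ref{lem:periodic} to force $M$ to be an integral cohomology sphere (using that $n$ is odd and $M$ is simply connected), then the Cohomology-to-Homotopy Lemma (Theorem~\ref{thm:CohomologyToHomotopy}) to upgrade to a homotopy sphere, and finally Montgomery--Yang (Theorem~\ref{thm:homotopysphere}) to conclude $M$ is diffeomorphic to $S^{2n-1}$, since the circle action has a simply connected codimension-two fixed-point set. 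This bypasses the orbit-space geometry entirely. Your identification of the ``main obstacle'' as Berger's lemma is also slightly off: the existence of a codimension-two fixed-point component is unproblematic here (one takes a maximal-dimension fixed-point component of any circle and applies the $\pr2$ symmetry-rank bound to it, exactly as in the proof of Theorem~\ref{thm:dim8+}); the real obstacle is the loss of Alexandrov control on the quotient.
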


The second author also proved a rigidity statement in even dimensions under the additional assumptions that the dimension is at least eight and the second Betti number is at most one. Here, we remove both of these assumptions and prove the following, our main result:
}

%	In this article, we establish a classification of even-dimensional closed, simply connected manifolds with $\pr{2}$ and maximal symmetry rank:
	
	\begin{result}\label{result:k=2}
		If $\gT^n$ acts effectively by isometries on a closed, 
		simply connected Riemannian $(2n)$-manifold $M$ with $\pr2$,
%		Let $M$ be a closed, simply connected Riemannian manifold of dimension $2n$ with $\pr{2}$.
%		If $\gT^n$ acts effectively by isometries on $M$,
		then one of the following holds:
		\begin{enumerate}
			\item $M$ is at least six-dimensional and is diffeomorphic to $S^{2n}$ or homeomorphic to $\CP^n$,\label{item:mainS2nCPn}
			\item $M$ is six-dimensional and $\chi(M) = 0$, or \label{item:maindim6}
			\item $M$ is equivariantly diffeomorphic to {$S^4$} with a linear $\gT^2$-action or to an equivariant connected sum $\CP^2 \# \dots \# \CP^2$ with a linear $\gT^2$-action on each summand. \label{item:mainconnsum}
		\end{enumerate}
	\end{result}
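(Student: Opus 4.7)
The plan is to extend the arguments of \cite{Mouille22b} so as to remove the two hypotheses used there, namely that $2n \geq 8$ and $b_2(M) \leq 1$, while also obtaining equivariant rigidity in dimension four and the dichotomy in dimension six. I would begin by importing the structural constraints from \cite{Mouille22a, Mouille22b}: an effective isometric $\gT^n$-action on a $(2n)$-manifold with $\pr 2$ admits a distinguished family of subcircles whose fixed-point components are codimension-two totally geodesic submanifolds. The $\pr 2$-analogue of Wilking's connectedness principle, due to Guijarro and Wilhelm \cite{GuijarroWilhelm20, GuijarroWilhelm22}, then ensures that these submanifolds include highly connectedly into $M$.

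For dimensions $2n \geq 6$ with $\chi(M) \neq 0$ the $\gT^n$-action has a fixed point $p$, and the weights of the isotropy representation at $p$ form a basis for the character lattice of $\gT^n$. To eliminate the hypothesis $b_2(M) \leq 1$ from \cite{Mouille22b}, I would combine the connectedness input with $\gT^n$-equivariant rational cohomology: when $M^{\gT^n}$ is finite, $H^*_{\gT^n}(M;\bQ)$ is a free $H^*(B\gT^n;\bQ)$-module of rank $\chi(M)$, with module structure determined by the weight data at the fixed points. Iterating the connectedness principle along the tower of fixed-point sets of subtori would then force the rational cohomology ring of $M$ to agree with that of $S^{2n}$ or $\CP^n$. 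Standard recognition theorems for homotopy spheres, and for homotopy $\CP^n$'s via surgery (the latter only up to homeomorphism), would then yield case~\eqref{item:mainS2nCPn}.

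The six-dimensional case is recovered from the same analysis: if $\chi(M) \neq 0$, it produces case~\eqref{item:mainS2nCPn}; otherwise there is no $\gT^3$-fixed point and we merely record case~\eqref{item:maindim6}. In dimension four, $\pr 2$ is strictly stronger than positive Ricci, so I would invoke the Orlik--Raymond equivariant classification of $\gT^2$-actions on closed simply connected four-manifolds, whose building blocks are $S^4$, $\CP^2$, $\overline{\CP^2}$, and $S^2 \times S^2$. I would then analyze the weights of the isotropy representation at each $\gT^2$-fixed point, and use the $\pr 2$ inequality together with the Jacobi operator characterization recalled after the definition of $\pr k$ to exclude the weight configurations coming from $\overline{\CP^2}$ and $S^2 \times S^2$ summands, leaving only the families listed in case~\eqref{item:mainconnsum}.

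The main obstacle I expect is the removal of the $b_2(M) \leq 1$ hypothesis in case~\eqref{item:mainS2nCPn}. Without it, a more delicate iterated analysis is needed: pass to the fixed-point set of a carefully chosen subcircle, verify that the fixed component inherits analogues of the main hypotheses (symmetry rank, $\pr 2$, and a controlled second Betti number), and use the connectedness principle to transfer information back to $M$. Ensuring that this induction closes uniformly across $2n \geq 6$, in particular that each inductive step stays within the class of manifolds to which the hypotheses apply, will be the technical heart of the argument.
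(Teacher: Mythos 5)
Your outline diverges from the paper's argument at the two places where the real work happens, and in both places the idea you propose would not close the gap.

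In dimension four, you propose to exclude $\overline{\CP^2}$ and $S^2\times S^2$ summands by a pointwise analysis of the isotropy weights at $\gT^2$-fixed points via the Jacobi operator characterization of $\pr2$. This is not how the obstruction arises, and I do not see how a local argument at fixed points could work, since $\pr2$ is a global hypothesis and the weight data at fixed points carries no direct curvature information. The actual mechanism is topological: the Connectedness Lemma applied to a codimension-two fixed-point component of a circle in a $\pr2$ four-manifold gives a $1$-connected inclusion, so every such fixed-point set is connected. In the Orlik--Raymond orbit data, the configurations that produce $S^2\times S^2$ and $\CP^2\#\overline{\CP^2}$ (and hence summands with mixed orientation in the general case) force some circle isotropy group to fix two disjoint $2$-spheres, i.e.\ some $r_{i,j}=0$, and connectedness kills exactly these. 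Without replacing your local argument with this global one, the four-dimensional classification does not come out, and in particular it is not about $\overline{\CP^2}$ summands being individually forbidden (orientation is a global choice) but about mixed orientations and $S^2\times S^2$.

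In dimension six with a $\gT^3$-fixed point, your proposal to deduce the cohomology of $S^6$ or $\CP^3$ from the freeness of $H^*_{\gT^n}(M;\bQ)$ over $H^*(B\gT^n;\bQ)$ and ``iterating the connectedness principle'' is too vague to be an argument and, as far as I can tell, does not yield the needed bound $b_2(M)\le 1$. In this dimension the Connectedness and Periodicity Lemmas are not strong enough on their own. The paper's proof instead takes the three circles $\gS^1_1,\gS^1_2,\gS^1_3$ at a fixed point with $4$-dimensional fixed components $N_1,N_2,N_3$, uses the Connectedness Lemma to identify each $N_i\cap N_j$ as a $2$-sphere, and runs an inclusion--exclusion Euler characteristic count against Conner's inequality $\chi(M)\ge\chi(N_1\cup N_2\cup N_3)$. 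That specific combination is what produces $b_2(M)\le 1$; your equivariant cohomology framing supplies nothing comparable. For the induction in dimensions $2n\ge 8$ you do correctly identify that one should descend to a codimension-two fixed-point component and check the hypotheses are inherited, but the thing that actually makes the induction close is observing that the Periodicity Lemma forces the odd Betti numbers of $M$, hence of $N$, to vanish, so $\chi(N)>0$ and the sub-torus acting on $N$ has a fixed point; this feeds the argument into the dimension-six fixed-point case and stops the descent. You flag this as the technical heart but do not supply it.
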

	
	Since $S^{2n}$ and $\mathbb C \mathrm P^n$ admit metrics with positive sectional curvature and $\gT^n$ symmetry, {Theorems \ref{thm:odd-dim} and \ref{result:k=2} } may be viewed as providing a complete homeomorphism classification in dimensions at least seven of manifolds admitting metrics with $\Ric_2 > 0$ and maximal symmetry rank. 
	As our methods differ from those in Grove-Searle \cite{GroveSearle94}, we do not know whether it is possible to upgrade the rigidity to equivariant diffeomorphism as in the case of positive sectional curvature, {even in view of the work of Montgomery and Yang that implies equivariance for a circle subaction but perhaps not for the torus action itself (see \cite{MontgomeryYang67} and Theorem \ref{thm:homotopysphere} below).}

	In dimension six, there is a metric on $S^3 \times S^3$ with $\pr{2}$ and maximal symmetry rank; see \cite[Example 2.3]{Mouille22b}. 
	We remark that, if additionally $M^6$ is $2$-connected, then the conclusion $\chi(M^6) = 0$ in (\ref*{item:maindim6}) is sufficient to imply that $M^6$ is diffeomorphic to $S^3 \times S^3$ (see Remark \ref{rem:2-connected} below). 
	We do not know if other closed, simply connected $6$-manifolds with $\chi(M) = 0$ admit metrics with $\Ric_2 > 0$.
	We remark that if a $\gT^3$-action $M^6$ has a fixed point, then we prove in Proposition \ref{prop:dim6fixedpt} below that $M^6$ is diffeomorphic to either $S^6$ or $\CP^3$.

	In dimension four, Orlik and Raymond showed that a smooth, simply connected, closed four-manifold with $\gT^2$ symmetry is equivariantly diffeomorphic to a connected sum of {$S^4$ and } copies of $S^2 \times S^2$, $\CP^2$, and $\mCP{2}$ (i.e. $\CP^2$ with the opposite orientation) \cite{OrlikRaymond70}. 
	Theorem \ref{result:k=2} shows that if additionally $M^4$ admits a $\gT^2$-invariant metric with $\pr{2}$, then $S^2 \times S^2$ summands do not appear and moreover that all summands of $\mathbb C \mathrm P^2$ come with the same orientation. 
	However, we currently cannot rule out the cases $b \geq 2$, and for these values, it is unknown whether any such manifold admits $\pr{2}$, much less whether such a metric can be invariant under a $\gT^2$-action. 
	Finally, we note that though $S^2 \times S^2$ cannot admit a metric with $\pr{2}$ and $\gT^2$-symmetry, it does admit one with $\gS^1$-symmetry; see \cite[Example 2.3]{Mouille22b}.
	In fact, this $\pr{2}$ metric on $S^2 \times S^2$ is invariant under a cohomogeneity one action by $\SO(3)$.
	
	We remark now on other known generalizations of Grove and Searle's maximal symmetry rank theorem. 
	First, in an unpublished manuscript, Wilking extended the Grove and Searle symmetry rank bound to manifolds that contain a point at which all sectional curvatures are positive \cite{Wilking08preprint}; for the proof, see \cite[Theorem 1.3]{GalazGarcia14}.
	Galaz-Garc\'ia then extended the Grove and Searle classification for maximal symmetry rank to manifolds with quasipositive curvature (sectional curvature non-negative everywhere and positive at a point) in dimensions four and five \cite{GalazGarcia14}.
	The second author established a generalized version of Wilking's symmetry rank bound for manifolds which have a point at which all intermediate Ricci curvatures are positive \cite{Mouille22a}.
	
	Second, for the case of positive weighted sectional curvature, the first author and Wylie proved the symmetry rank bound is the same as for positive sectional curvature, and they recover rigidity in the equality case up to homeomorphism; see \cite{KennardWylie17}.
	
	Third, for non-negative sectional curvature, Galaz-Garc\'ia and Searle conjectured a generalization of the maximal symmetry rank theorem \cite{GalazGarciaSearle11}, which was later reformulated and sharpened by Escher and Searle \cite{EscherSearle21}.
	%The present conjecture is that simply connected, closed $n$-dimensional manifolds with non-negative sectional curvature must have symmetry rank bounded above by $\lfloor \frac{2n}{3} \rfloor$, and in the case of equality, must be equivariantly diffeomorphic to {a linear action on } a product of spheres or a quotient thereof by a {free } linear torus action.
	{Work of Galaz-Garc\'ia and Searle \cite{GalazGarciaSearle11}, Galaz-Garc\'ia and Kerin \cite{Galaz-GarciaKerin14}, and Escher and Searle \cite{EscherSearle21} confirm this conjecture up to dimension nine and moreover prove the symmetry rank upper bound in dimensions up to $12$. } 
	% proved this conjecture in dimensions up to $6$, and and Escher and Searle showed that the conjectured symmetry rank bound holds up to dimension $12$, and the classification up to dimension $9$ \cite{EscherSearle21}.
	With the added assumption that the maximal torus action is isotropy-maximal, the conjecture was shown by Escher and Searle to hold in all dimensions in \cite{EscherSearle21}.
	{
	Analogously, the conjecture was established up to rational homotopy equivalence by Galaz-Garc\'ia, Kerin, and Radeschi in \cite{GGKR20} in the case where the assumption of non-negative sectional curvature is replaced with rational ellipticity, which is expected to follow from non-negative sectional curvature by the Bott-Grove-Halperin ellipticity conjecture (see \cite{GroveHalperin82,Grove02,GroveWilkingYeager19}).%, then the conjecture was established up to rational homotopy equivalence by Galaz-Garc\'ia, Kerin, and Radeschi in \cite{GGKR20}.
	}
	
	Finally, for the situation where the torus is replaced by an elementary $p$-group for some prime $p$, Fang and Rong proved the optimal upper bound and obtained homeomorphism rigidity in the equality case for $p$ larger than a constant depending only on the manifold dimension \cite{FangRong04}.
	There are two analogues of this result for $p = 2$ (see \cite{FangGrove16} and \cite[Theorems A and B]{KennardSamaniSearle21preprint}).
	
	Our second main result is a rigidity statement for Riemannian manifolds with $\pr{k}$ for larger values of $k$. 
	More precisely, given a closed connected $n$-dimensional Riemannian manifold $M$, the second author proved that the symmetry rank of $M^n$ is at most $\floor{\tfrac{n+k}{2}} - 1$ if $M^n$ has $\pr k$ with $k \geq 3$ (see \cite[Proposition 1.6]{Mouille22b}). This bound agrees with the classical bound of $\floor{\frac{n+1}{2}}$ for $k = 3$ and for $k = 4$ when $n$ is odd. 
	Since the condition $\pr{k}$ grows weaker as $k$ increases, the available tools also grow weaker for manifolds with $\pr{k}$, and one should not expect to be able to prove an analogue of Theorem \ref{result:k=2} in this setting without stronger hypotheses. We prove two results along these lines. The first is based on the model spaces of spheres and $S^3 \times S^3$.

	\begin{result}\label{result:k>2sphere}
	Fix $k \geq 3$, and assume $M^n$ is a $(k-1)$-connected, closed Riemannian manifold with $n\neq 7$ if $k = 3$. If $M^n$ has $\Ric_k > 0$ and admits an isometric $\gT^r$-action with $r = \floor{\frac{n+k}{2}} - 1$, then one of the following occurs:
		\begin{enumerate}
		\item $M$ is diffeomorphic to $S^n$ and $k \leq 4$, with equality only if $n$ is odd.
		\item $M$ is diffeomorphic to $S^3 \times S^3$ and $k = 3$.
		\end{enumerate}
	\end{result}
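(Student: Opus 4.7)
The plan is to follow the broad strategy of the Grove-Searle argument for maximal symmetry rank under $\pr 1$, replacing the role of positive sectional curvature with the $(k-1)$-connectedness hypothesis. Given a circle subgroup $\gS^1 \subset \gT^r$ with a fixed-point component $N \subset M$, $N$ is totally geodesic, inherits $\pr k$ when $\dim N \geq k+1$, and carries an effective $\gT^{r-1}$-action. Matching $r-1$ against the symmetry-rank upper bound $\floor{\tfrac{\dim N + k}{2}} - 1$ from \cite[Proposition 1.6]{Mouille22b} forces either $\codim N = 2$ or $\dim N < k+1$; this dichotomy drives the argument.

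In the main case, some circle in $\gT^r$ has a codimension-two fixed-point component $N$. Then $N$ has $\pr k$ and maximal symmetry rank for its dimension, and Wilhelm's connectedness principle for positive intermediate Ricci curvature (\cite{Wilhelm97}, refined in \cite{GuijarroWilhelm22}) transfers enough of the $(k-1)$-connectedness of $M$ onto $N$ to permit an induction on dimension. In the base case I would appeal to Theorem~\ref{thm:odd-dim} or Theorem~\ref{result:k=2}, concluding that $N$ is a sphere. Since $N$ is then a sphere of dimension at least three, its second integral cohomology vanishes, so the normal disk bundle of $N$ in $M$ is trivial and $M$ is the double of $D^2 \times N$, yielding $M \cong S^n$. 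The asymmetric restriction ``$k \leq 4$, with equality only if $n$ odd'' then emerges as the condition under which $S^n$ can support an effective $\gT^r$-action of rank $r = \floor{\tfrac{n+k}{2}} - 1$, by comparing this with the standard maximum $\floor{\tfrac{n+1}{2}}$ for a torus acting effectively on $S^n$.

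The exceptional outcome $M \cong S^3 \times S^3$ arises when no circle has a codimension-two fixed-point component. In that case either $\chi(M) = 0$ (so the $\gT^r$-action is fixed-point-free) or the fixed-point sets are too small to iterate the reduction above. Both possibilities are severely restricted by $(k-1)$-connectedness with Poincar\'e duality: in dimension six with $k = 3$ one pins down the cohomology ring as that of $S^3 \times S^3$ and concludes using the classification of simply connected closed six-manifolds together with the model metric of \cite[Example 2.3]{Mouille22b}. The main obstacle, I expect, is to verify that the Wilhelm connectedness estimate is strong enough at each inductive step to preserve the $(k-1)$-connectedness hypothesis on $N$ (rather than merely $(k-2)$-connectedness, which would fall outside the range where Theorem~\ref{thm:odd-dim} and Theorem~\ref{result:k=2} apply); the exclusion $n = 7$, $k = 3$ in the hypothesis reflects the familiar difficulty of distinguishing $S^7$ from its exotic smooth structures under only a $\gT^4$-action, as hinted at by the Montgomery-Yang remark following Theorem~\ref{result:k=2}.
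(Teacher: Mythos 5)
Your proposal follows the same broad outline as the paper—find a circle with a codimension-two fixed-point component $N$ via a symmetry-rank count, then exploit the Connectedness Lemma—but the crucial step from ``$N$ is a sphere'' to ``$M$ is diffeomorphic to $S^n$'' is wrong, and the explanation of the $n\neq 7$ hypothesis is incorrect.

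First, the double disk bundle argument does not work here. Under $\pr k$ with $k\geq 2$ one does not have Grove--Searle's critical point theory for the distance function from $N$, so there is no a priori decomposition of $M$ as a union of two disk bundles. Moreover, even granting that the normal disk bundle of $N\cong S^{n-2}$ is trivial, the double of $D^2\times S^{n-2}$ is $S^2\times S^{n-2}$, not $S^n$; that is simply not the conclusion you want. The paper avoids both problems by working cohomologically: once the inclusion $N\hookrightarrow M$ is $(n-k-1)$-connected (Connectedness Lemma), the Periodicity Lemma produces a class $e\in H^2(M;\bZ)$ inducing periodicity, and the hypothesis that $M$ is $(k-1)$-connected (with $k\geq 3$) forces $e=0$; combined with Poincar\'e duality this shows $M$ is an integral cohomology sphere. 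Diffeomorphism rigidity then comes from the homotopy-theoretic Theorem~\ref{thm:CohomologyToHomotopy} and the Montgomery--Yang Theorem~\ref{thm:homotopysphere}—not from a geometric decomposition. This is Proposition~\ref{prop:k-1connected} in the paper, and it replaces your entire inductive step. No induction on dimension is needed.

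Second, the role of $n\neq 7$ has nothing to do with exotic $S^7$; Montgomery--Yang rules out exotic differentiable structures once the cohomology is pinned down, regardless of dimension. The exclusion is needed because Lemma~\ref{lem:codim2Rick}(a)—the existence of a codimension-two fixed-point component—is proved only in the range $3\leq k\leq n-5$. When $n=7$ and $k=3$ that range is empty ($k=3 > n-5 = 2$), so the paper's method of producing the codimension-two $N$ fails; this is a limitation of the proof technique, not a statement about exotic spheres. Finally, your treatment is incomplete in the ranges $k\geq n/2$: the paper handles $k>n/2$ by noting that $(k-1)$-connectedness and Poincar\'e duality already force $M$ to be a homology sphere, and it handles the borderline case $k=n/2$ separately by examining whether $\gT^r$ has a fixed point. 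In the fixed-point-free case with $k=n/2$ odd, the Euler characteristic argument forces $k=3$, $n=6$, $b_3(M)=2$, and the identification $M\cong S^3\times S^3$ then follows from Corollary 2.6 of \cite{EscherSearle19}, not merely from the cohomology ring together with the 6-manifold classification.
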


The conclusions in (1) and (2) are optimal in the sense that $S^n$ and $S^3 \times S^3$ admit metrics with $\Ric_k > 0$ and maximal symmetry rank for all values of $k$ shown. The second result for large values of $k$ is modeled on complex projective space:

	\begin{result}\label{result:k>2CP}
		Fix $k \geq 3$, and let $M^n$ be a simply connected, closed Riemannian manifold.
		Assume further that $M^n$ is an integral cohomology $\CP$ up to degree $k + 2$. 
		If $M$ has $\Ric_k > 0$ and admits an effective, isometric $\gT^r$-action with $r = \floor{\frac{n+k}{2}} - 1$, then $n$ is even, $M^n$ is homeomorphic to $\CP^{\frac n 2}$, and $k = 3$.
	\end{result}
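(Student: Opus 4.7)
The plan is to reduce to the case $k=3$ and $n=2m$ even via a fixed point argument, upgrade the cohomology $\CP$ hypothesis to all degrees via induction on $m$, and then apply topological classification.

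First, since $M$ is an integral cohomology $\CP$ up to degree $k+2\geq 5$, we have $H^2(M;\bZ)\cong\bZ$. Following the approach of \cite{Mouille22b} combined with an intermediate-Ricci Weinstein-type theorem (see \cite{Wilhelm97}) applied iteratively to circle subactions, I would show $\chi(M)>0$ and consequently $M^{\gT^r}\neq\emptyset$. At a fixed point $p$, the isotropy representation embeds $\gT^r$ faithfully into $\SO(T_pM)\cong\SO(n)$, forcing $r\leq\lfloor n/2\rfloor$. A direct case-check using $r=\lfloor(n+k)/2\rfloor-1$ shows this inequality fails whenever $k\geq 4$, and also when $k=3$ with $n$ odd. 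Hence $k=3$, $n=2m$, and $r=m$, so the isotropy representation at $p$ is conjugate to the standard maximal torus representation on $\bC^m$, with weights $\pm\epsilon_1,\dots,\pm\epsilon_m$ spanning $\ft^*$.

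Next, I would identify the integral cohomology of $M$ with that of $\CP^m$. For $n\leq 10$ (equivalently $m\leq 5$), the cohomology hypothesis in degrees $\leq 5$ combined with Poincar\'e duality on $M$ already determines $H^*(M;\bZ)\cong H^*(\CP^m;\bZ)$ completely. For $n\geq 12$, I would argue by induction on $m$. The circle $S^1\subset\gT^m$ defined by the vanishing of a single weight $\epsilon_i$ has a codimension-$2$ totally geodesic fixed point component $F\ni p$ inheriting $\pr{3}$ and a residual effective $\gT^{m-1}$-action of rank $m-1=\lfloor((2m-2)+3)/2\rfloor-1$, so $F$ satisfies the hypotheses of the theorem in dimension $2m-2$. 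An intermediate-Ricci analog of Wilking's Connectivity Lemma (cf.\ the focal radius estimates in \cite{GuijarroWilhelm22}) yields that $F\hookrightarrow M$ is $(2m-3)$-connected; in particular $F$ is simply connected and, via Poincar\'e duality on $F$, an integral cohomology $\CP$ up to degree $5$. The inductive hypothesis then gives $F$ homeomorphic to $\CP^{m-1}$, and transferring back shows $H^i(M;\bZ)\cong H^i(\CP^m;\bZ)$ for $i\leq 2m-4$, with the remaining degrees determined by Poincar\'e duality on $M$.

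Finally, once $M$ is shown to have the integral cohomology ring of $\CP^m$, a generator of $H^2(M;\bZ)$ defines a classifying map $M\to\CP^\infty$ which lifts to a homotopy equivalence $M\simeq\CP^m$ by standard obstruction theory, and homeomorphism with $\CP^m$ then follows from Freedman's classification when $m=2$ and from the surgery-theoretic classification of simply connected closed manifolds with the cohomology ring of $\CP^m$ when $m\geq 3$. The main obstacle I anticipate is establishing a sharp enough intermediate-Ricci Connectivity Lemma over $\bZ$ for codimension-$2$ totally geodesic submanifolds in the $\pr{3}$ setting, and verifying that the residual $\gT^{m-1}$-action on each fixed point component $F$ is in fact effective and of maximal symmetry rank, so that the induction truly applies.
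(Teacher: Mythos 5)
Your proposal front-loads the claim that $\chi(M)>0$ (hence $M^{\gT^r}\neq\emptyset$) by appealing to ``an intermediate-Ricci Weinstein-type theorem applied iteratively to circle subactions,'' and this is where the argument breaks. The cohomology hypothesis only controls $H^*(M;\bZ)$ up to degree $k+2$, so $\chi(M)$ is not determined from the outset, and the Weinstein/Synge-type results for $\Ric_k>0$ (see \cite{Wilhelm97}) give fixed points of individual isometries, not of a full rank-$\lfloor(n+k)/2\rfloor-1$ torus. In fact, the best general statement in this direction (Lemma~\ref{lem:isotropyrank}(2)) only produces a fixed point for a subtorus $\gT^{r-k}$, not for $\gT^r$ itself. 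Your induction then needs a $\gT^r$-fixed point $p$ in order to identify the isotropy weights and build the codimension-two submanifold $F$, but the existence of $p$ is exactly what you are trying to deduce. The paper breaks this circularity by \emph{not} starting with a $\gT^r$-fixed point: Lemma~\ref{lem:codim2Rick} produces a circle whose fixed-point set has a codimension-two component using only the Isotropy Rank Lemma for a $\gT^{r-k}$-subtorus, the maximality of the chosen component, and the symmetry rank bound applied to that (totally geodesic, hence $\Ric_k>0$) component. Only then does the Periodicity Lemma, combined with the hypothesis that $M$ is a cohomology $\CP$ up to degree $k+2$, pin down the full cohomology ring; the fixed point of $\gT^r$ and the identities $k=3$, $r=n/2$ are consequences, not inputs.

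A secondary issue: your base case ``$n\le 10$ follows from Poincar\'e duality'' is too generous. The hypothesis controls $H^i(M;\bZ)$ only for $i\le 4$ (with injectivity in one more degree), so Poincar\'e duality closes the gap only when $k+2\ge\tfrac{n+3}{2}$, i.e.\ $n\le 7$ for $k=3$; in particular $H^5(M^{10};\bZ)$ is not determined this way. The paper handles all $n$ in one stroke via the Periodicity Lemma (showing the periodicity class $e=\lambda x$ has $\lambda=\pm1$ by evaluating in degree $2\lfloor k/2\rfloor$), avoiding an induction on dimension altogether. Your inductive scheme could be salvaged once a codimension-two fixed-point component is obtained non-circularly, and the effectivity of the residual $\gT^{m-1}$-action and the connectivity bounds you invoke are correct, but as written the argument does not get off the ground because the starting fixed point is not available.
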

	
	By the assumption on the cohomology of $M$, we mean that $H^1(M;\bZ) \cong 0$, $H^2(M;\bZ)\cong \bZ$, and the map $H^i(M;\bZ) \to H^{i+2}(M;\bZ)$ induced by multiplication by a generator $x\in H^2(M;\bZ)$ is surjective for $0 \leq i < k$ and injective for $0 < i \leq k$.
	As with Theorem \ref{result:k=2}, we cannot obtain rigidity up to diffeomorphism, and we do not know whether any exotic $\CP^{\frac n 2}$ admits $\pr{3}$ and maximal symmetry rank.
	
	Though Theorems \ref{result:k>2sphere} and \ref{result:k>2CP} partially generalize Theorem \ref{result:k=2} under stronger topological assumptions, we note that there are potentially more examples of manifolds that satisfy $\pr{k}$ with $k \geq 3$ than those listed in the conclusions of these results. For example, under the respective product metrics, $S^2 \times S^2$ has $\pr{3}$ with $\gT^2$-symmetry, $S^3 \times S^2$ has $\pr{4}$ with $\gT^3$-symmetry, and $S^3 \times S^3$ has $\pr{4}$ and $\gT^4$-symmetry.
	
	Finally, we analyze the case of non-trivial fundamental group:
	
	\begin{result}\label{result:pi1}
	Let $M^n$ be a closed, connected Riemannian manifold with $\pr{2}$ and $\gT^r$ symmetry with $r = \left\lfloor \frac{n+1}{2} \right\rfloor$. If $\pi_1(M)$ is non-trivial, then one of the following occurs:
		\begin{enumerate}
		\item $M$ is homotopy equivalent to $\RP^n$ or a lens space, or
		\item $M$ has dimension six and, if additionally the universal cover is $S^3 \times S^3$, then $\pi_1(M) \cong \bZ_l \times \bZ_m$ for some $l,m\geq 1$.
		\end{enumerate}% or $n$ is odd and $M$ is homotopy equivalent to a lens space $S^n/\bZ_l$ with $l \geq 3$ and $\bZ_l$-action given by $\omega \cdot (z_0,\ldots,z_m) = (\omega^d
	\end{result}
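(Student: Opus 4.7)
The plan is to reduce to the simply connected classification by passing to the universal cover $\tilde M \to M$. Since $\pr2$ implies positive Ricci curvature, Myers' theorem gives that $\pi_1(M)$ is finite, so $\tilde M$ is a closed, simply connected Riemannian $n$-manifold with $\pr2$. The $\gT^r$-action on $M$ lifts to an isometric action of a compact Lie group $\tilde G$ on $\tilde M$ whose identity component $\tilde G^0$ is a torus of rank $r$ (a finite connected cover of $\gT^r$, hence still $\gT^r$), while $\pi_1(M)$ acts freely on $\tilde M$ by deck transformations lying inside $\tilde G$. Thus $(\tilde M, \tilde G^0)$ satisfies the hypotheses of Theorem \ref{thm:odd-dim} (when $n$ is odd) or Theorem \ref{result:k=2} (when $n$ is even), pinning $\tilde M$ down up to homeomorphism or diffeomorphism.

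With $\tilde M$ identified, I would analyze the possible free $\pi_1(M)$-quotients case by case. When $\tilde M \approx S^n$ with $n$ even, the Lefschetz fixed-point theorem forces $\pi_1(M) \cong \bZ_2$ acting antipodally, yielding $M \simeq \RP^n$. When $\tilde M \approx S^n$ with $n$ odd, the deck group $\pi_1(M) \subset \tilde G$ normalizes $\tilde G^0 = \gT^r$; combined with maximality of $r$ and rigidity of finite subgroups of the normalizer of a maximal torus acting freely on $S^n$, this forces $\pi_1(M)$ to be cyclic and $M$ to be homotopy equivalent to a lens space. When $\tilde M$ is homeomorphic to $\CP^{n/2}$, a generator $\tau$ of any non-trivial cyclic subgroup of $\pi_1(M)$ acts on $H^2(\tilde M) \cong \bZ$ by an integer $d$, and freeness demands $L(\tau) = 1 + d + \cdots + d^{n/2} = 0$, so $d = -1$ and $n/2$ must be odd; the residual free $\bZ_2$ possibility is then excluded by exploiting the fixed-point data of $\tilde G^0$ on $\tilde M$. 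For the four-dimensional case, $\tilde M = S^4$ gives $\RP^4$, while the connected sum $\CP^2 \# \cdots \# \CP^2$ is ruled out by a similar Lefschetz argument together with the constraint that $\tau^*$ preserves the positive-definite intersection form on $H^2$.

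For the six-dimensional case, conclusion (2) of the theorem holds automatically whenever $\tilde M$ has dimension six. The refinement when $\tilde M \approx S^3 \times S^3$ follows by identifying the centralizer of $\tilde G^0 = \gT^3$ in $\mathrm{Iso}(\tilde M)$: the Lie-theoretic centralizer of $\gT^3$ inside $\mathrm{Iso}(S^3 \times S^3)^0$ is contained in a maximal torus $\gT^4$, so $\pi_1(M)$ embeds into this product of circles and hence is isomorphic to $\bZ_l \times \bZ_m$. The main obstacle will be the $\CP^{n/2}$ case for $n \geq 8$: ruling out a free $\bZ_2$-quotient requires a careful argument about the $\tilde G^0$-fixed-point structure, since equivariant rigidity of the torus action is not available in the $\pr2$ setting. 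A secondary delicate point is showing that $\pi_1(M)$ is cyclic (rather than an arbitrary free finite subgroup of $\mathrm{Iso}(S^n)$) in the odd-dimensional spherical case.
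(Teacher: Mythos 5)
Your overall strategy — pass to the universal cover $\tilde M$, use Myers to get $\pi_1(M)$ finite, classify $\tilde M$ via Theorems \ref{thm:odd-dim} and \ref{result:k=2}, then analyze which finite groups can act freely — is the same skeleton the paper uses. But the missing idea is the one tool that makes every case tractable: since $\Gamma := \pi_1(M)$ acts by deck transformations commuting with $\gT^r$, $\Gamma$ preserves and acts freely on the fixed-point components of the circle subgroups, and these smaller fixed-point manifolds carry decisive extra information. You never exploit this, and as a result three of your four cases have real gaps.

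For $\tilde M$ a cohomology $S^n$, the paper notes that $\Gamma$ acts freely on both $\tilde M$ and on the unique codimension-two circle fixed-point component $N$, which is a cohomology $S^{n-2}$ by the Betti number lemma; acting freely on cohomology spheres of both parities forces $\Gamma$ cyclic (and $\bZ_2$ if $n$ even), at which point Theorem \ref{thm:CohomologyToHomotopy} finishes. Your proposed route via rigidity of finite subgroups of the normalizer of a maximal torus in $\Iso(S^n)$ is not an argument — you would first need to know what $\Iso(\tilde M)$ is, which you do not. For $\tilde M$ a cohomology $\CP^{n/2}$, the paper's argument is immediate: $\Gamma$ acts freely on both $\CP^{n/2}$ and the codimension-two fixed manifold, which is a cohomology $\CP^{n/2-1}$ by Su's theorem; $|\Gamma|$ must divide both $\chi = n/2+1$ and $\chi = n/2$, which are coprime, so $\Gamma$ is trivial. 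This closes the free-$\bZ_2$ gap you correctly flag as the ``main obstacle'' of your Lefschetz approach. For the four-dimensional connected sum $\CP^2 \# \cdots \# \CP^2$ with $b\geq 2$, your ``Lefschetz plus positive-definite intersection form'' sketch does not close: for instance $\tau^* = -\mathrm{id}$ on $H^2(\#_2\CP^2) = \bZ^2$ has trace $-2$, is an isometry of the form, and is compatible with the ring structure, so no contradiction appears. The paper instead observes that $\Gamma$ permutes the $\gT^2$-fixed points and acts freely on the two-element sets $\{f_0,f_1\}$ and $\{f_1,f_2\}$ cut out by adjacent weighted edges, which is impossible for a nontrivial group since $f_1$ would be forced to map to two distinct points.

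Finally, the $S^3\times S^3$ case in your proposal is actually incorrect, not merely incomplete. You appeal to the centralizer of $\gT^3$ inside $\Iso(S^3\times S^3)^0$, but the $\pr2$ metric is not the product round metric, so you have no control over $\Iso(\tilde M)$; and even if the centralizer were contained in a $\gT^4$, a finite subgroup of $\gT^4$ need not be a product of two cyclic groups, so the desired conclusion $\pi_1(M) \cong \bZ_l\times\bZ_m$ would not follow. The paper instead uses Corollary \ref{cor:fixedptsS3xS3}: some circle $\gS^1\subset\gT^3$ has connected fixed-point set $F \cong \gT^2$, and evaluation at any $x\in F$ gives a $\gT^2$-equivariant diffeomorphism $\gT^2 = \gT^3/\gS^1 \to F$; pushing $\Gamma$ through this identification produces an injective homomorphism $\Gamma \hookrightarrow \gT^2$, whence $\Gamma$ is a subgroup of a rank-two torus and therefore of the form $\bZ_l\times\bZ_m$.
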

	
	The standard models in (1) can already be realized with positive sectional curvature, and in the case of maximal symmetry rank and positive sectional curvature, Grove and Searle's result recovers rigidity up to diffeomorphism instead of just homotopy, and Galaz-Garc\'ia later strengthened this rigidity to equivariant diffeomorphism. 
	Also note that our result rules out the possibility that the universal cover is homeomorphic to $\CP^{\frac n 2}$ when $M$ is not simply connected. 
	Finally, we remark that the fundamental groups as in (2) can be realized by the known metric on $S^3 \times S^3$ with $\gT^3$ symmetry and $\Ric_2 > 0$. 
	Indeed, the product of the Hopf actions on the $S^3$ factors gives rise to a $\gT^2$-subaction of the $\gT^3$-action. Since this action is free, we obtain products of lens space $S^3/\bZ_l \times S^3/\bZ_m$ as examples, as well as quotients $(S^3 \times S^3)/\bZ_l$ by possibly diagonal actions, for example, $(S^3 \times S^3)/\{\pm(1,1)\} = \SO(4)$. 
	We also remark that other finite groups act freely on $S^3 \times S^3$, including all finite subgroups of $\Spin(4) \cong S^3 \times S^3$ and, more surprisingly, the two-fold product $S_3 \times S_3$ of the symmetric group on three letters (see Davis \cite{Davis98preprint} and \cite{Hambleton06}). 
	Finally, we note that Dom\'inguez-V\'azquez, Gonz\'alez-\'Alvaro, and Rodr\'iguez-V\'azquez have determined that the Wallach flag manifold $W^6 = \SU(3)/\gT^2$ with the normal homogeneous metric has $\pr{2}$ \cite{DVGARVinprep}. 
	This metric  has a free, isometric $S_3$-action and has $\gT^2$-symmetry, which is not maximal for $\pr{2}$ in dimension $6$.
	
	Regarding the non-simply connected case of Theorems \ref{result:k>2sphere} and \ref{result:k>2CP}, the conclusions are the same as in Part (1) of Theorem \ref{result:pi1} under the modification that the cohomology of the {\it universal cover} of $M$ satisfies the topological assumptions stated in Theorems \ref{result:k>2sphere} and \ref{result:k>2CP}. The proof is a straightforward modification of the proof we give for the case of $k = 2$, so it is omitted. 
	
% Proof of Theorem A
	The key tool for establishing Theorem \ref{result:k=2} is the Connectedness Lemma (Theorem \ref{thm:connprinc}).
	In dimension $4$, we use this tool in conjunction with topological results by Orlik and Raymond \cite{OrlikRaymond70}; see Section \ref{sec:dim4}.
	In dimension $6$, we break the argument into the cases according to whether the torus has a fixed point; see Section \ref{sec:dim6}.
	If the torus has a fixed point, we employ an argument involving Euler characteristics to rule out connected sums of complex projective spaces.
	Curiously, this argument only eliminates such connected sums in dimensions strictly larger than four.
	If the torus has no fixed points, then it follows immediately that $\chi(M) = \chi(M^{\gT^3}) = 0$. 
	For dimensions $8$ or greater, we prove that the torus has a fixed point, and the result then follows by induction using the Connectedness Lemma, noting in dimension eight that the induced torus action on the $6$-dimensional submanifold involved in the proof has a fixed point; see Section \ref{sec:dim8+}.

% Proof of Theorem B	
	To prove Theorems \ref{result:k>2sphere} and \ref{result:k>2CP}, we show in most cases that such manifolds must have a circle action whose fixed-point set contains a component of codimension $2$, and then we apply the Connectedness Lemma (Theorem \ref{thm:connprinc}); see Section \ref{sec:k>2}. 
	The demand for the additional topological assumptions 
	is a consequence of the fact that the Connectedness Lemma provides less information about the topology of $M$ as $k$ increases.

% Proof of Theorem C
	Theorem \ref{result:pi1} is proved in Section \ref{sec:pi1}. It borrows standard and elementary results from group cohomology that have been used previously in the positive sectional curvature case together with special arguments in the cases where the universal cover $\tilde M$ is diffeomorphic to $S^3 \times S^3$ or $\CP^2\#\ldots\#\CP^2$.

\subsection*{Acknowledgements}
	
	The authors would like to thank William Wylie, Jason DeVito, Marco Radeschi, and Fernando Galaz-Garc\'ia for helpful discussions, along with Jason DeVito and Philipp Reiser for suggestions on a previous version of this article. We also thank the referees for their comments to improve the exposition. 
	The first author was funded by NSF Grant DMS-2005280 and by Simons Foundation Award MP-TSM-00002791, and the second author was funded by NSF Award DMS-2202826.
	
\section{Preliminaries}
	
	We begin with a discussion of fixed-point sets.
	Given an isometric action of a Lie group $\gG$ on a Riemannian manifold $M$, we let $M^{\gG}$ denote the fixed-point set of the $\gG$-action on $M$.
	Given a point $p\in M^{\gG}$, we denote the component of $M^{\gG}$ that contains $p$ by $M_p^{\gG}$, and refer to it as a fixed-point component.
	The following is a foundational structure result for isometric torus actions.
	
	\begin{lemma}\label{lem:torusfpc}
		Let $M$ be a closed Riemannian manifold.
		Assume a torus $\gT^r$ acts isometrically on $M$, and let $\gH$ be a closed subgroup of $\gT^r$ whose fixed-point set $M^{\gH}$ is non-empty.
		Then every component of $M^{\gH}$ is an embedded, totally geodesic submanifold of even codimension in $M$ that is invariant under the action of $\gT^r / \gH$.
		Furthermore, given any fixed-point component $M_p^{\gH}$, the following hold:
		\begin{enumerate}
			\item If $\gH$ is a torus and $M$ is orientable, then $M_p^{\gH}$ is also orientable.
			
			\item If $\dim \gH \geq 2$, then there exists a circle subgroup $\gS^1 \subset \gH$ whose fixed-point component $M_p^{\gS^1}$ strictly contains $M_p^{\gH}$.\label{item:dimG>1}
			
			\item If $\gH$ is disconnected and is the isotropy group at $p$, and if $\dim \gH \geq 1$, then there exists a non-trivial, finite isotropy group $\Gamma \subseteq \gH$ whose fixed-point component $M_p^{\Gamma}$ strictly contains $M_p^{\gH}$. \label{item:Gdisconnected}
		\end{enumerate}
	\end{lemma}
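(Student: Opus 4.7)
My plan is to localize all statements at a fixed point $p\in M^\gH$ via the exponential map, which is $\gH$-equivariant and identifies a tubular neighborhood of $p$ with a neighborhood of $0\in T_pM$ carrying the isotropy representation. Because $\gH\subseteq \gT^r$ is a closed subgroup of a compact abelian Lie group, it is itself compact abelian, isomorphic to $\gT^s\times F$ for a torus of dimension $s=\dim\gH$ and a finite abelian group $F$. In particular every real irreducible representation of $\gH$ is either one-dimensional and trivial on $\gH_0$, or two-dimensional and given by a character $\chi\colon \gH\to S^1$ whose image is not contained in $\{\pm 1\}$. This structural observation is the key input throughout.

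For the first assertions of the lemma, I would invoke the classical Kobayashi argument: uniqueness of geodesics with initial data tangent to $(T_pM)^\gH$, together with the fact that $\gH$ acts by isometries fixing $p$, shows that $M_p^\gH$ is a closed, embedded, totally geodesic submanifold with tangent space $(T_pM)^\gH$. Invariance of $M^\gH$ under $\gT^r/\gH$ is immediate because $\gT^r$ is abelian. For the even-codimension statement, my approach is to decompose $T_pM$ as a representation of the full $\gT^r$-isotropy $\gG_p\supseteq \gH$ into trivial summands and two-dimensional irreducible summands with characters $\chi_i$; each such two-dimensional piece is either contained in $(T_pM)^\gH$ (when $\chi_i|_\gH$ is trivial) or has trivial $\gH$-fixed subspace (since a nontrivial closed subgroup of $S^1$ rotating $\bR^2$ has no nonzero fixed vector). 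The normal space to $M_p^\gH$ is therefore a direct sum of two-dimensional real pieces, giving even codimension. The orientability claim in (1) follows because these two-dimensional pieces inherit complex structures from the characters of $\gH_0$, so the normal bundle is orientable, and an orientation on $M$ induces one on $M_p^\gH$.

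For (2), I would decompose the normal representation at $p$ into irreducible pieces. If some piece is one-dimensional real (necessarily trivial on $\gH_0$), then any circle in $\gH_0$ fixes it; otherwise every piece is two-dimensional with character $\chi\colon \gH_0\to S^1$ nontrivial, and the identity component of $\ker(\chi)$ is a subtorus of $\gH_0$ of codimension one, which contains a circle because $\dim\gH_0\geq 2$. In either case the chosen $\gS^1\subseteq \gH$ satisfies $(T_pM)^{\gS^1}\supsetneq (T_pM)^\gH$, yielding $M_p^{\gS^1}\supsetneq M_p^\gH$.

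Part (3) is the most delicate, and the main obstacle will be realizing $\Gamma$ as a genuine $\gT^r$-isotropy of some point of $M$ rather than as an abstract finite subgroup of $\gH$. Here I would use the slice theorem: since $\gG_p=\gH$, the isotropy at $\exp_p(v)$ for $v$ in the orthogonal complement of $(T_pM)^\gH$ equals the $\gH$-stabilizer $\gH_v$. My strategy is to select a character $\chi$ appearing in the normal decomposition and set $\Gamma=\ker(\chi|_\gH)$, which is automatically realized as $\gH_v$ for generic $v$ in the corresponding two-dimensional piece. Using the decomposition $\gH\cong \gT^s\times F$, finiteness of $\Gamma$ corresponds to $\chi|_{\gH_0}$ being injective, and nontriviality corresponds to $\chi|_\gH$ having nontrivial kernel. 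When $s=1$, every character of $\gH_0=S^1$ occurring in the normal has the form $z\mapsto z^n$ with $n\neq 0$ (otherwise $\gH_0$ would act trivially on a neighborhood of $p$, contradicting effectiveness of the action), and its extension $(z,f)\mapsto z^n\psi(f)$ to $\gH$ has kernel of order $|n|\cdot|F|\geq 2$, since $F$ is nontrivial by disconnectedness; this delivers the desired $\Gamma$. When $s\geq 2$, I would instead apply (2) to obtain a circle subgroup of $\gH$ with enlarged fixed set, take $\Gamma$ to be a nontrivial finite cyclic subgroup of that circle, and verify via the slice decomposition that $\Gamma$ is the $\gH$-stabilizer of a generic point on a suitable stratum of the normal representation.
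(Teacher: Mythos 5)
The paper does not actually supply a proof of this lemma. Its main assertions and part (1) are treated as standard, and for parts (2) and (3) it cites Petersen's textbook and a preprint of Kennard, Wiemeler, and Wilking, respectively. Your proof therefore cannot be compared against a proof in the paper, but it can be evaluated on its own terms. Your treatment of the main statement, part (1), and part (2) is essentially sound, modulo the observation that when $\gH$ is disconnected a two-dimensional irreducible $\gH$-summand of the normal space can restrict trivially to $\gH_0$ (its character factoring through $\gH/\gH_0$ with image of order at least three); the dichotomy in your argument for (2) omits this case, but the fix is immediate since any circle in $\gH_0$ again fixes such a summand. Similarly, in (3) with $\dim\gH=1$, your parenthetical claim that \emph{every} normal character restricts nontrivially to $\gH_0$ is false for the same reason; fortunately you only need one such character, and one exists because the isotropy representation is faithful.

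The genuine gap is in your treatment of (3) when $\dim\gH\geq 2$. The plan to take $\Gamma$ to be a nontrivial finite cyclic subgroup of the circle $\gS^1$ produced by (2) and then to ``verify'' that it is realized as an isotropy group does not go through: by construction the circle from (2) lies in $\ker(\chi_i|_{\gH_0})$ for one of the normal characters, so every stratum it meets already has positive-dimensional $\gH$-stabilizer, and the genuine finite isotropy groups need not meet $\gS^1$ at all. Concretely, let $\gH = \gT^2\times\bZ_2$ act faithfully on $\nu_p = \bC\oplus\bC$ by $(z_1,z_2,\epsilon)\cdot(v_1,v_2) = (\epsilon z_1 v_1,\,\epsilon z_2 v_2)$ with $\epsilon\in\{\pm 1\}$. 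Then $\nu_p^{\gH}=0$, the only circles produced by (2) are $\{1\}\times S^1$ or $S^1\times\{1\}$ inside $\gT^2$, and yet the unique nontrivial finite isotropy group is the diagonal $\bZ_2 = \{(1,1,1),(-1,-1,-1)\}$, which meets each of those circles trivially. The argument can be repaired uniformly for all $s=\dim\gH\geq 1$ as follows. Faithfulness of the isotropy representation forces the $\gH_0$-weights of the two-dimensional summands of $\nu_p$ to span the character lattice $\bZ^s$; choose $s$ summands $V_1,\dots,V_s$ with linearly independent weights $\bar\chi_1,\dots,\bar\chi_s$ and set $\Gamma = \bigcap_{i=1}^s\ker\chi_i$. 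The map $\gH\to(S^1)^s$ given by $(\chi_1,\dots,\chi_s)$ is already surjective on $\gH_0$, so $\Gamma$ is finite, and a fiber count using the splitting $\gH\cong\gT^s\times F$ with $F=\gH/\gH_0$ gives $|\Gamma| = |\det(\bar\chi_1,\dots,\bar\chi_s)|\cdot|F|\geq 2$. Finally $\Gamma = \gH_v$ for generic $v\in V_1\oplus\dots\oplus V_s$, so $\Gamma$ is indeed realized as an isotropy group by the slice theorem, and $M_p^\Gamma$ strictly contains $M_p^\gH$ because $\Gamma$ fixes the nonzero normal vector $v$.
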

	
	For justification for Part (\ref*{item:dimG>1}), see, for example, \cite[Proposition 8.3.8]{Petersen16}, and for Part (\ref*{item:Gdisconnected}), see \cite[Lemma 1.10]{KennardWiemelerWilking21preprint}.
	
	The next lemma, which was established by Conner \cite{Conner57}, will be especially useful in establishing Theorem \ref{result:pi1}.
	
	\begin{lemma}[Betti Number Lemma]\label{lem:Conner}
		If a torus $\gT$ acts smoothly on a closed, smooth manifold $M$, then $\chi(M) = \chi(M^{\gT})$, $\sum b_{2i+1}(M^{\gT}) \leq \sum b_{2i+1}(M)$, and $\sum b_{2i}(M^{\gT}) \leq \sum b_{2i}(M)$.
	\end{lemma}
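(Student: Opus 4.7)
The plan is to establish each of the three assertions first for a single smooth circle action and then deduce the general case by induction on $r = \dim \gT$. For the inductive step, write $\gT = \gT^{r-1}\times \gS^1$; since $\gT$ is abelian, the circle factor acts smoothly on the fixed-point set $M^{\gT^{r-1}}$, which is a disjoint union of closed smooth submanifolds of $M$ by the slice theorem (cf.\ Lemma \ref{lem:torusfpc}). Because $(M^{\gT^{r-1}})^{\gS^1} = M^\gT$, chaining the inductive hypothesis with the circle case applied to this restricted action delivers all three statements.

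For the Euler characteristic in the circle case, choose $g\in \gS^1$ generating a dense cyclic subgroup, so that $M^g = M^{\gS^1}$. The element $g$ is smoothly isotopic to the identity through the path connecting them in $\gS^1$, and hence the Lefschetz number satisfies $L(g) = L(\mathrm{id}) = \chi(M)$. On the other hand, $M^g = M^{\gS^1}$ is a submanifold whose normal bundle admits no $+1$ eigenvector of $dg$ (its closure $\gS^1$ acts without fixed vectors off the zero section), so the nondegenerate Lefschetz fixed-point formula gives $L(g) = \chi(M^g) = \chi(M^{\gS^1})$. Combining these yields $\chi(M) = \chi(M^{\gS^1})$.

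For the Betti number inequalities in the circle case, I would invoke Borel equivariant cohomology. Form the Borel construction $M_{\gS^1} = E\gS^1\times_{\gS^1} M$, whose rational cohomology $H^*_{\gS^1}(M;\bQ)$ is a graded module over $H^*(B\gS^1;\bQ) = \bQ[u]$ with $|u|=2$. Because $\bQ[u]$ is concentrated in even degrees, the Leray--Serre spectral sequence $E_2^{p,q} = H^p(B\gS^1;\bQ)\otimes H^q(M;\bQ)\Rightarrow H^{p+q}_{\gS^1}(M;\bQ)$ decouples by the parity of the total degree and produces the $\bQ[u]$-rank bounds
\[
\mathrm{rank}_{\bQ[u]}\, H^{\mathrm{even}}_{\gS^1}(M;\bQ) \,\leq\, \sum\nolimits_i b_{2i}(M), \qquad \mathrm{rank}_{\bQ[u]}\, H^{\mathrm{odd}}_{\gS^1}(M;\bQ) \,\leq\, \sum\nolimits_i b_{2i+1}(M).
\]
The Borel localization theorem then furnishes an isomorphism
\[
H^*_{\gS^1}(M;\bQ)[u^{-1}] \;\cong\; H^*(M^{\gS^1};\bQ)\otimes_\bQ \bQ[u,u^{-1}]
\]
of $\bQ[u,u^{-1}]$-modules; since $u$ has even degree, this isomorphism respects the induced $\bZ/2$-grading and identifies the parity-wise ranks on the left with $\sum_i b_{2i}(M^{\gS^1})$ and $\sum_i b_{2i+1}(M^{\gS^1})$, respectively. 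Comparing with the spectral sequence bounds yields the stated inequalities.

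\textbf{Main obstacle.} The principal technical input is the Borel localization theorem, together with the observation that a generic element of $\gS^1$ is simultaneously isotopic to the identity and has fixed-point set equal to $M^{\gS^1}$. With these in hand, the parity refinement follows formally from $|u|$ being even, and the reduction from a torus to a circle proceeds by a routine induction, so I do not expect any substantial further obstacle beyond verifying that each intermediate fixed-point set carries the requisite smooth structure for the next round of the induction.
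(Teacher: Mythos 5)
The paper does not prove this statement; it is cited to Conner \cite{Conner57} and used as a black box, so there is no ``paper's proof'' to compare against. Your argument is a correct and standard modern proof, and it is essentially the Smith--Borel spectral sequence approach that underlies Conner's original result, packaged through the Borel localization theorem. The induction on torus rank is clean and the reduction $(M^{\gT^{r-1}})^{\gS^1}=M^{\gT}$ is exactly right (each component of $M^{\gT^{r-1}}$ is a closed smooth manifold, so the circle case applies componentwise and Betti numbers add).

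Two small points deserve tightening. First, in the Euler characteristic step, ruling out a $+1$-eigenvector of $dg$ on the normal bundle only yields nondegeneracy of the fixed manifold; the clean Lefschetz formula then gives $L(g)=\sum_F \chi(F)\cdot \mathrm{sign}\det\bigl(I-dg|_{\nu F}\bigr)$, so you still need the signs to be $+1$. That follows because the isotropy representation of $\gS^1$ on $\nu F$ has no trivial real summand and hence splits into two-dimensional rotation representations; for each rotation by $\theta\ne 0$ one has $\det(I-R_\theta)=2-2\cos\theta>0$, so the determinant is a product of positive factors. (Equivalently, an averaged invariant metric makes $g$ an isometry in the identity component, and the normal representation is complex.) Second, ``decouples by parity'' is slightly misleading: the Leray--Serre differentials shift total degree by one and therefore \emph{exchange} parities. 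What actually saves you is that $E_{r+1}^{p,q}$ is a subquotient of $E_r^{p,q}$ in the same total degree, so the parity-graded ranks are nonincreasing in $r$; combined with $E_2^{\mathrm{even}}=\bQ[u]\otimes H^{\mathrm{even}}(M;\bQ)$ and the degree-preserving localization isomorphism, the parity-wise rank bound follows as you state. With these two clarifications the proof is complete.
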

	
	The two main ways in which our positive curvature assumptions play a role is via the following two results. 
	The first is a generalization of the Berger-Sugahara fixed-point theorem, which is stated in the next lemma.
	Part (\ref*{item:Berger}) was established by Berger \cite{Berger66}, the $k=1$ case of Part (\ref*{item:SugaharaMouille}) independently by Sugahara \cite{Sugahara82} and Grove and Searle \cite{GroveSearle94}, and the $k \geq 2$ case by the second author \cite{Mouille22b}.
	
	\begin{lemma}[Isotropy Rank Lemma]\label{lem:isotropyrank}
		Let $M$ be a closed Riemannian manifold with $\pr{k}$ and an isometric action by a torus $\gT^r$.
			\begin{enumerate}
			\item If $k = 1$ and $n$ is even, then $\gT^r$ has a fixed point. \label{item:Berger}
			\item For any $k\geq 1$ and $r \geq k$, there exists a subtorus $\gT^{r-k}$ that has a fixed point. \label{item:SugaharaMouille}
			\end{enumerate}
			%	If a torus $\gT^r$ of rank $r\geq k+1$ acts by isometries on $M$, then there exists a codimension-$k$ subtorus $\gT^{r-k} \subset \gT^r$ such that the $\gT^{r-k}$-action on $M$ has a fixed point. 
	\end{lemma}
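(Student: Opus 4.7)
The plan is to handle Part (\ref*{item:Berger}) by Berger's classical Hessian argument with a single Killing field, and Part (\ref*{item:SugaharaMouille}) by generalizing to $k + 1$ commuting Killing fields in a way that extracts the $\pr k$ curvature bound.

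For Part (\ref*{item:Berger}), I would choose $X \in \ft^r$ whose one-parameter subgroup is dense in $\gT^r$, so that it suffices to produce a zero of $X$. Suppose $X$ nowhere vanishes, and let $p \in M$ minimize $|X|^2$. The critical-point condition $d|X|^2|_p = 0$ combined with the Killing identity gives $\nabla_X X(p) = 0$, so $X(p) \in \ker(\nabla X|_p)$. Since $\nabla X|_p$ is skew-symmetric (as $X$ is Killing), its kernel has the same parity as $n$; when $n$ is even this kernel has even dimension at least $2$ and therefore contains a unit vector $v$ perpendicular to $X(p)$. Using the Killing-field Jacobi identity $\nabla_v \nabla_v X = -R(X, v)v$, one computes
\[(\Hess \, |X|^2)(v, v) \;=\; 2|\nabla_v X|^2 - 2 \sec(v, X(p))\, |X(p)|^2 \;=\; -2 \sec(v, X(p))\, |X(p)|^2 \;<\; 0,\]
where the middle equality uses $v \in \ker(\nabla X|_p)$ and the final inequality uses positive sectional curvature. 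This contradicts the minimality of $|X|^2$ at $p$, so $X$ has a zero, which by density of the flow is a $\gT^r$-fixed point.

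For Part (\ref*{item:SugaharaMouille}), I would argue by contradiction: suppose no $\gT^{r - k}$-subtorus of $\gT^r$ fixes a point, so that every $\gT^r$-orbit has dimension at least $k + 1$. At a point $p$ on a minimal-dimensional orbit, choose commuting Killing fields $X_0, \ldots, X_k \in \ft^r$ that are linearly independent at $p$. The plan is to run a multi-field analogue of Berger's Hessian argument, minimizing an appropriate scalar combination of the $|X_i|^2$ at $p$ and extracting $k + 1$ orthonormal directions---a distinguished direction provided by one of the $X_i$ together with $k$ perpendicular Jacobi-type directions obtained from the others---so that the summed Hessian assembles, via the Killing-Jacobi identity, into a sum of $k$ sectional curvatures of the form $\sum_{i=1}^k \sec(Y_i, \gamma')$. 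The $\pr k$ hypothesis forces this sum to be strictly positive, which reverses the sign in the Hessian computation and contradicts the minimality. The $k = 1$, $n$ odd case (Sugahara / Grove--Searle) is the specialization of this scheme to two commuting Killing fields with $\pr 1$.

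The main obstacle is the linear-algebra coordination in the $k \geq 2$ step: one must arrange the $k$ perpendicular variation directions so that they lie simultaneously in the kernels of the relevant $\nabla X_i$, are perpendicular to the distinguished direction, and at every point of the orbit assemble into an orthonormal $(k+1)$-frame that realizes the $\pr k$ sum---all while retaining enough freedom to force each individual Hessian contribution to vanish except for the curvature term. Berger's $k = 1$ case is driven by a single parity count in even dimensions, whereas the multi-field generalization requires a more delicate combined Jacobi-field and linear-algebra argument that locates a $k$-dimensional subspace with the required properties; this is the technical heart of Mouill\'e's contribution.
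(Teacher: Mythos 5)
The paper does not supply a proof of this lemma; it is stated as known and attributed to Berger (Part (1)), to Sugahara and to Grove and Searle (the $k=1$ case of Part (2)), and to Mouill\'e (the $k \geq 2$ case of Part (2)), with references in place of an argument. There is therefore no in-paper proof to compare yours against.

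Your Part (1) is a correct reproduction of Berger's classical argument: at a minimizer $p$ of $|X|^2$ the critical-point identity gives $X(p) \in \ker(\nabla X|_p)$, skew-symmetry of $\nabla X|_p$ together with $n$ even forces that kernel to have dimension at least two, and the resulting unit vector $v \perp X(p)$ in the kernel makes the Hessian equal $-2\sec(v,X(p))\,|X(p)|^2 < 0$, a contradiction; density of the one-parameter group then upgrades the zero of $X$ to a $\gT^r$-fixed point. This part is sound.

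Part (2), by contrast, is a plan rather than a proof, and you say so yourself. The reduction is correct: if no $\gT^{r-k}$ fixes a point, then every isotropy group has dimension at most $r-k-1$, so every orbit has dimension at least $k+1$, and one may pick $k+1$ commuting Killing fields linearly independent at a chosen point. But the steps that would make the Hessian computation close---(i) the choice of which scalar functional in the $|X_i|^2$ to minimize and where, (ii) the production of $k$ directions perpendicular to a distinguished Killing direction that simultaneously lie in the relevant kernels of the $\nabla X_i$, and (iii) the assembly of these into an orthonormal $(k+1)$-frame whose $\pr{k}$ sum produces the strictly negative contribution---are precisely what you label ``the main obstacle'' and defer. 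Notice that the difficulty is already present at $k=1$ with $n$ odd: there $\nabla X|_p$ is skew on an odd-dimensional space, its kernel is odd-dimensional and may be spanned by $X(p)$ alone, so no perpendicular kernel vector is available, the one-field argument fails, and one must genuinely engage the second commuting field. As written, your argument for Part (2) is an outline that identifies the crux but does not resolve it, so it does not establish the statement; the cited works \cite{Sugahara82,GroveSearle94,Mouille22b}, which the paper invokes in lieu of a proof, remain necessary.
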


{
We note that analogous conclusions hold if we replace the curvature assumption in Lemma \ref{lem:isotropyrank} by a topological one.
We believe this result is well known, but as we do not know of a reference, we provide a proof here for completeness. 
	\begin{lemma}[Spherical Isotropy Rank Lemma]\label{lem:isotropyrank_spheres}
		Let $M$ be a closed manifold with a smooth action by a torus $\gT^r$ with $r \geq 1$. If $M$ has the rational homology of a sphere, then the following hold:
			\begin{enumerate}
			\item If $n$ is even, then $\gT^r$ has a fixed point.
			\item If $n$ is odd, then some $\gT^{r-1}$ has a fixed point.
			\end{enumerate}
			%	If a torus $\gT^r$ of rank $r\geq k+1$ acts by isometries on $M$, then there exists a codimension-$k$ subtorus $\gT^{r-k} \subset \gT^r$ such that the $\gT^{r-k}$-action on $M$ has a fixed point. 
	\end{lemma}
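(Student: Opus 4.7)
Part~(1) is immediate from the Betti Number Lemma (Lemma~\ref{lem:Conner}): since $n$ is even, $\chi(M) = \chi(S^n) = 2$, so $\chi(M^{\gT^r}) = 2 \neq 0$ and $M^{\gT^r}$ must be non-empty. For Part~(2), I plan to prove by induction on $r$ the stronger statement that some $p \in M$ has $\dim (\gT^r)_p \geq r-1$ (from which any $\gT^{r-1}$-subgroup of $(\gT^r)_p^0$ fixes $p$). The base case $r=1$ is trivial, and the inductive step for $r \geq 2$ splits into two parts.

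Step one shows that $\gT^r$ does not act almost freely on $M$ (i.e.\ that some point has positive-dimensional isotropy). Consider the Borel fibration $M \to E\gT^r \times_{\gT^r} M \to B\gT^r$: almost freeness forces the total space to be rationally equivalent to the finite-dimensional orbifold $M/\gT^r$. On the $E_2$-page $\bQ[x_1,\ldots,x_r] \otimes H^*(M;\bQ)$ of the Serre spectral sequence, only the differential $d_{n+1}$ can be nonzero, sending the fundamental class $[M]$ to some $\alpha \in H^{n+1}(B\gT^r;\bQ)$, so $E_\infty^{*,0} = \bQ[x_1,\ldots,x_r]/(\alpha)$, a quotient that is finite-dimensional over $\bQ$ only when $r \leq 1$. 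Step two then iterates on a fixed-point component: pick $p$ with $\dim (\gT^r)_p = k \geq 1$, set $\gH := (\gT^r)_p^0$, and consider $F := M^\gH_p$. Because the normal bundle of $F$ in $M$ splits into $\gH$-rotation planes, $F$ is a closed submanifold of even positive codimension (so $\dim F$ is odd) whose normal bundle is canonically oriented, hence $F$ inherits orientability from $M$ (which is orientable as a rational sphere). The Betti Number Lemma applied to the $\gH$-action gives $\sum b_i(F;\bQ) \leq 2$, and combined with $\chi(F) = 0$, $b_0(F) = 1$, and Poincar\'e duality this forces $F$ to be a rational cohomology sphere of odd dimension $n' < n$. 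Applying the inductive hypothesis to $\gT^r/\gH \cong \gT^{r-k}$ acting on $F$ yields $q \in F$ with $\dim (\gT^{r-k})_q \geq r-k-1$; since $(\gT^r)_q$ contains $\gH$ and projects onto $(\gT^{r-k})_q$, we obtain $\dim (\gT^r)_q \geq k + (r-k-1) = r-1$.

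The main obstacle is step one. With no curvature assumption available, Berger's theorem and its generalizations (Lemma~\ref{lem:isotropyrank}) are inapplicable, so the impossibility of almost free $\gT^r$-actions with $r \geq 2$ on odd-dimensional rational spheres must be established cohomologically; the Borel-construction argument above is effectively a short proof of the classical fact that such a sphere has toral rank one. The rest of the argument is routine bookkeeping, but one must take care to verify that the fixed-point component $F$ is genuinely a rational sphere rather than only a space with bounded total rational Betti number, which is where orientability and Poincar\'e duality are needed so that the induction can be invoked cleanly.
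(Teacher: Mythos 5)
Your proof is correct, but it takes a genuinely different route from the paper's in two respects.

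For the key obstruction to an almost free action when $r \geq 2$, the paper invokes Smith's theorem that $\bZ_p \times \bZ_p$ cannot act freely on a $\bZ_p$-homology sphere (choosing $p$ large enough that $M$ is a $\bZ_p$-homology sphere and $\bZ_p^r \subseteq \gT^r$ meets no isotropy group), whereas you run the Serre spectral sequence of the Borel fibration to show directly that the toral rank of a rational cohomology sphere is at most one. Both are correct; your argument is self-contained modulo the Borel construction, while the paper's is shorter by citing Smith. For identifying the fixed-point component as a rational sphere, the paper again cites Smith theory (fixed sets of circle actions on rational spheres are rational spheres), while you deduce it from the Betti Number Lemma (Lemma~\ref{lem:Conner}) together with orientability of $F$ and Poincar\'e duality — a nice low-tech alternative that is essentially a restatement of Conner's result. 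Finally, the paper inducts on the odd dimension $n$, passing to the fixed-point set of a single circle and reducing the torus rank by one; you induct on $r$, passing to the fixed-point component of the full identity component $\gH$ of a non-principal isotropy group and applying the hypothesis to $\gT^r/\gH$. These are organizationally different but logically equivalent reductions.

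Two very small points worth tightening: you assert $\dim F$ is odd because the codimension is even and positive, but strictly speaking the codimension need not be positive if $\gH$ lies in the kernel of the action (your argument still goes through with $F = M$, so this is harmless, just phrase it as non-negative even codimension); and the invocation of $\chi(F)=0$ is superfluous once you have $b_0(F) = b_{n'}(F) = 1$ and the total Betti number bound $\leq 2$.
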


\begin{proof}
Since $M$ and the fixed-point set of $M$ have the same Euler characteristic, which is non-zero for even-dimensional spheres, the first conclusion follows immediately.

We prove the second conclusion by induction over odd integers $n \geq 1$. When $n = 1$, $M$ is a circle so the kernel of the action on $M$ contains a $\gT^{r-1}$ and the result follows. Assume now that $n\geq 3$. The result holds trivially if $r = 1$, so we assume $r \geq 2$. Smith proved that $\mathbb{Z}_p \times \mathbb{Z}_p$ cannot act freely on a $\mathbb{Z}_p$-homology sphere (see \cite{Smith44}). It follows that $\gT^r$ cannot act almost freely on $M$, since otherwise we can find a prime $p$ sufficiently large so that $M$ is a $\mathbb{Z}_p$-homology sphere and so that the subgroup $\mathbb{Z}_p^r \subseteq \gT^r$ has trivial intersection with all isotropy groups and hence acts freely. We can now choose an isotropy group of positive dimension and hence a circle $\gS^1 \subseteq \gT^r$ with non-trivial fixed-point set $F$. By another result of Smith, $F$ is a rational sphere (see \cite{Smith38}). By induction, the induced $\gT^r$-action on $F$ has a codimension one torus $\gT^{r-1}$ with non-trivial fixed-point set $F^{\gT^{r-1}}$. Since $F^{\gT^{r-1}} = F \cap M^{\gT^{r-1}}$, this subtorus is the one we seek.
\end{proof}
}

	The second main tool from positive curvature we use is Wilking's Connectedness Lemma (see \cite{Wilking03}), which is the $k = 1$ statement of the following theorem. The generalization of the first part of (1) and of (2) to the case where $k \geq 2$ is stated in \cite[Remark 2.4]{Wilking03}. 
	For the second part of (1), the generalization to $k \geq 2$ was proved by the second author (see \cite{Mouille22b}), building on work of Guijarro and Wilhelm (see \cite{GuijarroWilhelm22}).
	
	\begin{theorem}[Connectedness Lemma]\label{thm:connprinc}
		Let $M^n$ be a closed, connected Riemannian manifold with $\pr{k}$. 
		\begin{enumerate}
			\item If $N^{n-d}$ is an embedded totally geodesic submanifold of $M^n$, then the inclusion 
			\[
				N^{n-d} \inj M^n \text{ is } (n-2d+2-k)\text{-connected}.
			\]
			Furthermore, if there is a Lie group $\gG$ acting on $M^n$ by isometries and fixing $N^{n-d}$ pointwise, then the inclusion is $(n-2d+2-k+\delta(\gG))$-connected, where $\delta(\gG)$ is the dimension of the principal orbits of the $\gG$-action on $M^n$.
			
			\item If $N_1^{n-d_1}$ and $N_2^{n-d_2}$ are embedded totally geodesic submanifolds with 
			   $d_1 \leq d_2$,
			then the intersection $N_1^{n-d_1} \cap N_2^{n-d_2}$ is also totally geodesic, and the inclusion 
			\[
				N_1^{n-d_1} \cap N_2^{n-d_2} \inj N_2^{n-d_2}  \text{ is } (n-d_1-d_2+1-k)\text{-connected}.
			\]
		\end{enumerate}
	\end{theorem}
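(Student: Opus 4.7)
The plan is to prove both parts by Morse theory on the energy functional over an appropriate space of paths in $M$, following Wilking's strategy in the sectional curvature case and its refinement to $\pr{k}$ via the averaging technique of Guijarro and Wilhelm.

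For part (1), consider the space $P(M;p,N)$ of piecewise smooth paths from a fixed point $p \in M$ to $N$, with the energy functional $E$. The critical points of $E$ are geodesics from $p$ to $N$ meeting $N$ orthogonally, and Morse-Bott theory realizes $P(M;p,N)$, up to homotopy, as a CW complex obtained by attaching cells to the minimizing part (essentially a subset of $N$) in dimensions equal to the indices of the non-minimal critical geodesics. Since this path space is weakly equivalent to the homotopy fibre of $N \hookrightarrow M$ at $p$, a uniform lower bound $\lambda$ on these indices shows that the inclusion is $\lambda$-connected.

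The heart of the proof is the index estimate. For a critical geodesic $\gamma$ orthogonal to $N$ at both ends, one uses parallel normal fields along $\gamma$ as test variations in the second variation formula. Under $\pr{k}$, the Guijarro-Wilhelm averaging argument groups such fields into orthonormal $(k+1)$-frames and invokes the $\pr{k}$ hypothesis to produce a subspace of dimension at least $n-2d+2-k$ on which the index form is negative. When a Lie group $\gG$ fixes $N$ pointwise, the action fields of $\gG$ furnish $\delta(\gG)$ additional independent negative directions along the $\gG$-orbit through $\gamma(0)$, improving the index bound by $\delta(\gG)$. Part (2) proceeds analogously on the path space $P(M;N_1,N_2)$: the intersection is totally geodesic since a geodesic with initial velocity in $T_p N_1 \cap T_p N_2$ remains in each $N_i$, the constant paths form $N_1 \cap N_2$, and the analogous index estimate yields the symmetric bound $n-d_1-d_2+1-k$.

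The main obstacle is the index estimate under the weaker $\pr{k}$ hypothesis. Positive sectional curvature allows one to exhibit negative directions in the index form one at a time, but $\pr{k}$ controls only averages over $(k+1)$-tuples of curvatures, so the negative subspace must be produced via a more delicate linear-algebra argument. Tracking the dimensions lost to this averaging, and confirming that the $\delta(\gG)$-contribution from the action fields stacks additively with the curvature estimate, is the crux of the proof.
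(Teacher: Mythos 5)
The paper does not actually prove this theorem; it is cited. Part (1) for $k=1$ and the basic $k \geq 2$ generalization, along with Part (2), are attributed to Wilking (\cite{Wilking03}, Remark 2.4), while the equivariant improvement by $\delta(\gG)$ in Part (1) for $k \geq 2$ is attributed to \cite{Mouille22b}, building on \cite{GuijarroWilhelm22}. So there is no internal proof to compare against.

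That said, your Morse-theoretic sketch does identify the correct strategy used in those references: Morse--Bott theory on an appropriate path space, index estimates via the second variation formula on parallel normal fields, the Guijarro--Wilhelm averaging device to deal with $\pr{k}$ (where the sectional curvatures are only controlled in $(k+1)$-tuples), and action fields of $\gG$ to improve the index by $\delta(\gG)$. However, the proposal is a plan rather than a proof: the index estimate under $\pr{k}$ and the verification that the $\delta(\gG)$ contribution stacks additively with the averaged curvature estimate --- which you flag as ``the crux'' --- are precisely the non-trivial content of \cite{Mouille22b} and \cite{GuijarroWilhelm22}, and are not carried out. Two smaller points to tighten: the description of the minimizing critical set of $P(M;p,N)$ as ``essentially a subset of $N$'' is not quite right (it is the set of minimizing geodesics from $p$ to $N$, a point if $p\in N$); Wilking's own formulation uses the space of paths with both endpoints on $N$, whose zero-energy critical set is exactly $N$, which makes the connectedness bookkeeping cleaner. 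And the off-by-one translation between a lower bound on the indices of critical geodesics and the stated connectedness of the inclusion should be stated explicitly, since the $+2$ and $+1$ in the exponents of the theorem come precisely from that bookkeeping.
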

	The Connectedness Lemma forces restrictions at the level of cohomology when combined with the following lemma, which is a topological result about highly connected inclusions of Poincar\'e duality spaces that was proved by Wilking \cite{Wilking03}:
	\begin{lemma}[Periodicity Lemma]\label{lem:periodic}
		Suppose $N^{n-d} \inj M^n$ is a $(n - d - l)$-connected inclusion of connected, closed, orientable manifolds. If $e \in H^d(M^n;\bZ)$ denotes the Poincar\'e dual of the image in $H_{n-d}(M^n;\bZ)$ of the fundamental class of $N$, then the homomorphisms 
		$\cup e: H^i(M;\bZ)\to H^{i+d}(M;\bZ)$ given by $x\mapsto x\cup e$ are surjective for $l\leq i<n-d-l$ and injective for $l<i\leq n-d-l$.
	\end{lemma}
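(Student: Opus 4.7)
The plan is to factor the cup-product map $\cup e \colon H^i(M;\bZ) \to H^{i+d}(M;\bZ)$ as a composition
\[
H^i(M;\bZ) \xrightarrow{\,i^*\,} H^i(N;\bZ) \xrightarrow{\,i_!\,} H^{i+d}(M;\bZ),
\]
where $i \colon N \inj M$ is the inclusion and $i_!$ is the Gysin (umkehr) map, and then to analyze each factor separately. The restriction $i^*$ is controlled directly by the connectedness hypothesis, while $i_!$ is controlled by applying Poincar\'e duality to the same hypothesis phrased in homology.

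First I would convert the $(n-d-l)$-connectedness of the inclusion into the vanishing $H_j(M, N;\bZ) = 0$ for $j \leq n-d-l$ via relative Hurewicz, and likewise the vanishing $H^j(M, N;\bZ) = 0$ in the same range via the universal coefficient theorem (here the simple-connectedness assumptions needed for Hurewicz are included in the connectedness hypothesis when $l$ is small enough, and are vacuous otherwise). From the long exact sequence of $(M, N)$ in cohomology, $i^* \colon H^i(M) \to H^i(N)$ is then an isomorphism for $i < n-d-l$ and injective for $i = n-d-l$. Dually, from the long exact sequence in homology, $i_* \colon H_j(N) \to H_j(M)$ is an isomorphism for $j < n-d-l$ and surjective for $j = n-d-l$. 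Defining $i_! \colon H^i(N) \to H^{i+d}(M)$ to be Poincar\'e--Lefschetz dual to $i_* \colon H_{n-d-i}(N) \to H_{n-d-i}(M)$, which is well-defined because $M$ and $N$ are orientable, translates this information to: $i_!$ is an isomorphism for $i > l$ and surjective for $i = l$. Combining the two factor analyses, $\cup e$ is surjective exactly when both $i^*$ and $i_!$ are, giving the range $l \leq i < n-d-l$, and injective exactly when both are, giving $l < i \leq n-d-l$.

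The core of the argument is therefore the factorization identity $x \cup e = i_!(i^* x)$. To prove it, let $\tau \in H^d(M, M \setminus N;\bZ)$ denote the Thom class of the normal bundle of $N$, identified as usual via a tubular neighborhood and excision, so that $e = j^* \tau$, where $j^* \colon H^d(M, M \setminus N) \to H^d(M)$ is the natural restriction. By naturality of the relative cup product, $x \cup e = j^*(x \cup \tau)$ in $H^{i+d}(M)$. The Thom isomorphism $H^i(N) \to H^{i+d}(M, M\setminus N)$ sends $\alpha$ to $p^*\alpha \cup \tau$, where $p$ is the projection of the tubular neighborhood onto $N$; since $p^* i^* = \mathrm{id}$ and $\tau$ is supported in the tubular neighborhood, this isomorphism sends $i^* x$ to $x \cup \tau$. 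By definition, $i_!$ is the composition of this Thom isomorphism with $j^*$, so $i_!(i^* x) = j^*(x \cup \tau) = x \cup e$, as desired. The main obstacle in the plan is keeping the Thom class, the Poincar\'e dual, and their various naturalities straight; once the identity $\cup e = i_! \circ i^*$ is in place, the rest of the proof is a clean assembly of the two factor analyses.
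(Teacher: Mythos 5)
The paper does not prove this lemma; it is quoted directly from Wilking \cite{Wilking03}, so there is no ``paper's own proof'' to compare against. Your argument is correct and is, up to presentation, the standard (and, I believe, Wilking's) argument: factor $\cup e$ as the Gysin map $i_!$ following the restriction $i^*$, read off the range of injectivity and surjectivity of $i^*$ directly from the connectedness hypothesis via the long exact sequence of the pair, and obtain the corresponding ranges for $i_!$ by applying Poincar\'e duality to $i_*$ in homology. The factorization $x\cup e = i_!(i^*x)$ is the projection formula, which your Thom-class derivation establishes.

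Two small points worth flagging, neither of which affects the substance. First, your parenthetical about simple connectedness for relative Hurewicz isn't accurate as written: the lemma is stated for general closed, connected, orientable manifolds, and $(n-d-l)$-connectedness of $N \inj M$ only relates $\pi_1(N)$ to $\pi_1(M)$ --- it never makes either trivial, regardless of $l$. Fortunately the needed vanishing $H_j(M,N;\bZ)=0$ for $j \leq n-d-l$ does hold without any $\pi_1$ assumption: one can replace the inclusion by a CW pair in which $M$ is built from $N$ by attaching cells only in dimensions greater than $n-d-l$, or equivalently invoke the general form of relative Hurewicz in which the $\pi_1(N)$-action on the (vanishing) relative homotopy groups is irrelevant. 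Second, the identity you want is $i^*p^* = \mathrm{id}$ on $H^*(N)$, coming from $p\circ i = \mathrm{id}_N$, rather than $p^*i^*$ as written; the point being used is that $p^*(i^*x)$ agrees with the restriction of $x$ to the tubular neighborhood, since $i$ and $p$ are inverse homotopy equivalences between $N$ and the tubular neighborhood.
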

	
	Of particular importance to us is the case in the Periodicity Lemma when $l = 1$ and $d = 2$.
	Based on whether $e$ is zero or non-zero, if $M^n$ is simply connected, we find that $M^n$ has the cohomology of $S^n$, $\CP^{\frac n 2}$, or more generally a finite connected sum $\CP^{\frac n 2} \# \ldots \# \CP^{\frac n 2}$ with at least two summands. For the first two of these manifolds, it is well known that homotopy rigidity is automatic. One has some level of rigidity in the non-simply connected case as well, according to the following (for a proof, see \cite[Theorem 3.4]{KennardSamaniSearle21preprint}):
	
	\begin{theorem}[Cohomology-to-homotopy Lemma]\label{thm:CohomologyToHomotopy}
	Let $M^n$ be a closed, smooth manifold. 
	The following hold:
		\begin{enumerate}
		\item If $\pi_1(M)$ is cyclic {(possibly trivial) } and the universal cover $\tilde M$ is a cohomology sphere, then $M$ is homotopy equivalent to $S^n$, $\RP^n$, or a lens space.
		\item If $\pi_1(M)$ is trivial and $M$ is a cohomology $\CP^{\frac n 2}$, then $M$ is homotopy equivalent to $\CP^{\frac n 2}$. 
		\end{enumerate}
	\end{theorem}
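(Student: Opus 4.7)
The plan is to treat the two parts separately. For Part (2), the plan is to represent a generator $x \in H^2(M;\bZ) \cong \bZ$ by a map $f: M \to K(\bZ,2) \simeq \CP^\infty$, and then use cellular approximation (noting that $\CP^{n/2}$ is the $n$-skeleton of $\CP^\infty$ and $M$ is $n$-dimensional) to arrange $f$ to have image in $\CP^{n/2}$. By construction, $f^*$ sends the generator of $H^2(\CP^{n/2};\bZ)$ to $x$, and the hypothesis forces the cohomology of $M$ in degrees $\leq n$ to be the truncated polynomial ring on $x$. Hence $f^*$ is an isomorphism on $H^*(-;\bZ)$, and since both spaces are simply connected, Whitehead's theorem yields that $f$ is a homotopy equivalence.

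For Part (1), the simply connected case is direct: $M$ is a simply connected cohomology $n$-sphere, so Hurewicz gives $\pi_n(M) \cong H_n(M;\bZ) \cong \bZ$, a generator is represented by a map $S^n \to M$ inducing isomorphisms on cohomology, and Whitehead again supplies a homotopy equivalence. For $\pi_1(M) \cong \bZ/m$ with $m \geq 2$, the simply connected case shows $\tilde M \simeq S^n$, and the covering $\bZ/m$-action is free. The plan here is to identify a standard spherical space form $L$ — either $\RP^n$ or a lens space — with matching fundamental group and matching action of the deck group on $H_n(\tilde M;\bZ)$, and then to construct a homotopy equivalence $M \to L$.

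To select the model $L$: an Euler characteristic argument via $\chi(M) = (1+(-1)^n)/m$ forces $m \in \{1,2\}$ when $n$ is even, and a Lefschetz fixed-point computation on the non-trivial deck transformation shows it must act by $-1$ on $H_n(\tilde M;\bZ)$ when $m=2$, so $L = \RP^n$. For $n$ odd, any $m \geq 2$ is allowed, and the first Postnikov $k$-invariant of $M$ lies in $H^{n+1}(\bZ/m;\bZ) \cong \bZ/m$ and must be a generator since the $\bZ/m$-action on $\tilde M$ is free; every such generator is realized by some standard lens space, chosen as $L$. To assemble the equivalence $M \to L$: since $\pi_k(L) = 0$ for $1 < k < n$, obstruction theory extends a chosen $\pi_1$-isomorphism on $1$-skeleta to a map $f: M \to L$; the homotopy class can be arranged so that the lift $\tilde f: \tilde M \to \tilde L$ has degree $\pm 1$. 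The lift is then a homotopy equivalence of homotopy spheres by Whitehead, and applying the five-lemma to the long exact sequences of homotopy groups for the two universal covers (using the $\pi_1$-isomorphism and the lifted equivalence) shows that $f$ itself is a weak equivalence, hence a homotopy equivalence by Whitehead. The main obstacle is the identification of the correct model $L$ in the non-simply-connected case, which relies on the classical classification of spherical space forms with cyclic fundamental group by their first $k$-invariants (going back to Olum and J.\,H.\,C. Whitehead).
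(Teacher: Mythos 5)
The paper does not prove this lemma itself --- it cites \cite[Theorem 3.4]{KennardSamaniSearle21preprint} --- so there is no in-text argument for you to have matched; your proposal is a genuine reconstruction, and in outline it is correct. Part (2) (the classifying map $M\to\CP^\infty$, cellular approximation into the $n$-skeleton $\CP^{n/2}$, and the homology Whitehead theorem) and the simply connected case of Part (1) (Hurewicz plus Whitehead) are the standard arguments. Your treatment of the non-simply connected case also follows the classical route: the Lefschetz/Euler characteristic argument correctly pins down $m\in\{1,2\}$ for $n$ even and shows the deck involution is orientation-reversing, and for $n$ odd the Serre spectral sequence of $\tilde M\to M\to B\bZ/m$ (with $\tilde M\simeq S^n$ and $\dim M=n$) does force the first $k$-invariant to generate $H^{n+1}(\bZ/m;\bZ)\cong\bZ/m$, every generator being realized by a linear lens space $L(m;q_1,1,\ldots,1)$. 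Two small points deserve more care. First, for $m=2$ and $n$ even the coefficient system $\pi_n(S^n)\cong\bZ$ in the lifting-obstruction groups is twisted by the orientation character, so the relevant groups are $H^*(M;\bZ_w)$; the conclusion is unchanged but the statement should say so. Second, the step ``arrange the lift to have degree $\pm1$'' is cleanest if you build $f$ by matching $n$-th Postnikov stages: once $L$ is chosen so $\kappa_L=\kappa_M$, the Postnikov stages $P_n(M)\simeq P_n(L)$ agree over $B\bZ/m$, the map $M\to P_n(L)$ lifts to $L$ because $\dim M=n$ and the fiber of $L\to P_n(L)$ is $n$-connected, and then the lift is automatically an isomorphism on $\pi_n$, hence of degree $\pm1$ on universal covers --- with no need for a separate normalization step. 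Alternatively your two-step version (lift, then adjust by $H^n(M;\bZ)$) works, but then one must observe that changing the lift shifts $\deg\tilde f$ by multiples of $m$ and that the residue $\deg\tilde f\bmod m$ is controlled by the ratio $\kappa_M/\kappa_L$, which is $1$ by the choice of $L$.
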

	
	Finally, to upgrade further from homotopy rigidity to homeomorphism or diffeomorphism rigidity, we use the following two results. 
	The first is for spheres and was proved by Montgomery and Yang \cite{MontgomeryYang67}, and the second is for complex projective spaces and was proved by Fang and Rong \cite{FangRong04}.
	
	\begin{theorem}
	[Diffeomorphism rigidity for spheres]
	\label{thm:homotopysphere}
		Suppose $M$ is a homotopy sphere, and assume the circle $\gS^1$ acts smoothly on $M$ such that the fixed-point set $N$ is simply connected and of codimension $2$. 
		Then $M$ is diffeomorphic to the standard sphere $S^n$ such that the $\gS^1$-action on $M$ is smoothly equivalent to a linear circle action on $S^n$.
	\end{theorem}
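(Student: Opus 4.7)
My plan is to decompose $M$ equivariantly into an invariant tubular neighborhood of the codimension-two fixed set $N$ and its complement, and then identify each of these two pieces with the corresponding piece of the standard linear $\gS^1$-action on $S^n$. As a first step, Smith-Borel theory applied to the $\gS^1$-action on the $\bZ$-cohomology sphere $M$ shows that $N$ is itself a $\bZ$-cohomology sphere; combined with the simple-connectivity hypothesis and the Hurewicz and Whitehead theorems, this realizes $N$ as a homotopy $(n-2)$-sphere.

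Next, the equivariant slice theorem provides an equivariant diffeomorphism between an invariant tubular neighborhood $T$ of $N$ and the total space $D(\nu)$ of the normal rank-$2$ disk bundle, with $\gS^1$ acting by standard rotation in each fiber $D^2$. Because $N$ is a simply connected homotopy sphere, $H^2(N;\bZ) = 0$ as soon as $n \geq 5$, so $\nu$ is trivial and $T \cong_{\gS^1} D^2 \times N$; the low-dimensional cases follow from a direct Euler-class computation using that $M$ is a homology sphere. Setting $W = M \setminus \mathrm{int}(T)$, the action on $W$ is free (since rotation fixes only the origin of each fiber), and $Q := W/\gS^1$ is a smooth compact manifold with $\partial Q = N$.

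The technical heart of the argument is to show that $Q$ is the standard disk $D^{n-1}$. Combining Alexander duality (which yields $H_*(M \setminus N) \cong H_*(\gS^1)$) with the Gysin sequence for the principal $\gS^1$-bundle $W \to Q$ and a van Kampen argument on $M = T \cup W$, one deduces that $Q$ is a simply connected integral homology disk. Applying the smooth $h$-cobordism theorem to the cobordism obtained by removing an open ball from the interior of $Q$ then gives $Q \cong D^{n-1}$ in high dimensions, with low-dimensional cases handled by classical surface, $3$-manifold, and $4$-manifold arguments; in particular, $N = \partial Q \cong S^{n-2}$. Since $Q$ is contractible, the principal bundle $W \to Q$ is trivial, so $W \cong_{\gS^1} S^1 \times D^{n-1}$ with rotation on the $S^1$ factor.

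Finally, the equivariant decomposition
\[
M \;=\; T \cup_{\partial T} W \;\cong\; (D^2 \times S^{n-2}) \cup_{\phi} (S^1 \times D^{n-1})
\]
must be matched with the standard splitting of $S^n = \partial(D^2 \times D^{n-1})$ equipped with the linear $\gS^1$-action on the $D^2$ factor. The main obstacle I expect is controlling the equivariant gluing diffeomorphism $\phi$ of $S^1 \times S^{n-2}$: one must show it can be isotoped through $\gS^1$-equivariant diffeomorphisms to the standard identification, which reduces to triviality of the relevant equivariant mapping class group together with the observation that any such diffeomorphism extends over $S^1 \times D^{n-1}$. The conclusion is that $M$ is equivariantly diffeomorphic to $S^n$ with a linear $\gS^1$-action, as desired.
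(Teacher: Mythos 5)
This theorem is not proved in the paper; it is cited from Montgomery and Yang \cite{MontgomeryYang67}, so there is no ``paper's proof'' to compare against. That said, your decomposition strategy is the standard route and is in the right spirit, so let me focus on where it has gaps.

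\textbf{Freeness on $W$.} You assert that $\gS^1$ acts freely on $W = M \setminus \mathrm{int}(T)$ ``since rotation fixes only the origin of each fiber.'' That observation only controls a tubular neighborhood of $N$; a priori, points of $W$ far from $N$ could have finite cyclic isotropy. This has to be ruled out, for instance by Smith theory: if some $\bZ_p \subset \gS^1$ had nontrivial isotropy off $N$, then $F = \mathrm{Fix}(\bZ_p)$ would be a $\bZ_p$-cohomology sphere strictly containing the $(n-2)$-dimensional $N$. Connectivity of a cohomology sphere of positive dimension forces $F$ to be connected, and then either $\dim F = n$ (so $\bZ_p$ acts trivially, contradicting effectivity), or (for $p=2$) $\dim F = n-1$, in which case $N$ would sit inside $F$ with odd codimension as the fixed set of the induced circle action — impossible. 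So $F = N$ and the action on $W$ is free, but this needs to be said.

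\textbf{The gluing step.} You correctly identify this as the crux, but your proposed resolution is not right as stated. The equivariant mapping class group of $S^1 \times S^{n-2}$ (with $\gS^1$ rotating the first factor) is not trivial: an equivariant diffeomorphism has the form $(z,x) \mapsto (z\,\alpha(x), \psi(x))$, and it extends over $S^1 \times D^{n-1}$ only if $\psi \in \mathrm{Diff}(S^{n-2})$ extends over $D^{n-1}$; the obstruction lives in $\pi_0\mathrm{Diff}(S^{n-2}) \cong \Theta_{n-1}$, which is generally nonzero. What actually saves the argument is that the \emph{specific} gluing map one encounters can be taken to be the identity: the trivialization of $\nu N \cong N \times \bR^2$ and the trivialization of the principal bundle $W \to Q$ are each unique up to homotopy because $H^1(N;\bZ) = H^1(Q;\bZ) = 0$, and these trivializations restrict compatibly on the common boundary $\partial T = \partial W$ because the circle-orbit map and the normal-disk projection agree there. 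Once you have this, $M \cong \partial(Q \times D^2) \cong \partial(D^{n-1} \times D^2) = S^n$ equivariantly, which gives both the diffeomorphism and the linearity of the action. So the strategy works, but the argument as written does not close this gap; appealing to ``triviality of the equivariant MCG'' is the wrong lemma.

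A cleaner formulation, which is essentially Montgomery--Yang's, is to invoke the classification of smooth semi-free $\gS^1$-actions with codimension-two fixed set: such an action is determined up to equivariant diffeomorphism by the orbit space $M^* = M/\gS^1$ (a smooth manifold with boundary $N$) together with the isomorphism class of the principal $\gS^1$-bundle over $\mathrm{int}(M^*)$. Here $M^* = Q \cong D^{n-1}$ and the bundle is trivial, so $M$ is the standard linear model.
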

	
	\begin{theorem}
	[Homeomorphism rigidity for complex projective spaces]
	\label{thm:homotopyCPn}
		Suppose $M$ is a homotopy $\CP^n$, and assume a submanifold $N$ of codimension $2$ is homeomorphic to $\CP^{n-1}$.
		If the inclusion map $N \inj M$ is at least $3$-connected, then $M$ is homeomorphic to $\CP^n$.
	\end{theorem}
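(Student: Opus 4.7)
The plan is to build $M$ from two standard pieces by identifying a tubular neighborhood of $N$ with a disk bundle over $\CP^{n-1}$ and showing that the complement is a topological disk. First, since $M$ is orientable (being a homotopy $\CP^n$) and $N$ has codimension $2$, the normal bundle $\nu$ of $N$ in $M$ is an oriented rank-two real vector bundle, i.e., a complex line bundle over $N \cong \CP^{n-1}$. Such bundles are classified by $c_1(\nu) \in H^2(\CP^{n-1};\bZ) \cong \bZ$, and I would identify $c_1(\nu)$ with the restriction of the Poincar\'e dual $e \in H^2(M;\bZ)$ of $[N]$.

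Next, the $3$-connectedness of $N \inj M$, together with the fact that $M$ is a homotopy $\CP^n$, forces $e$ to be a generator of $H^2(M;\bZ)$. Indeed, by Lemma \ref{lem:periodic} with $d=2$ and $l=1$, cupping with $e$ is an isomorphism between generators of $H^0(M;\bZ)$ and $H^2(M;\bZ)$. The restriction $e|_N$ is therefore a generator of $H^2(N;\bZ)$, so $\nu$ is (up to orientation) the tautological line bundle over $\CP^{n-1}$, whose disk bundle $D(\nu)$ is the standard model for $\CP^n$ minus an open ball and whose sphere bundle is the Hopf fibration $S^{2n-1} \to \CP^{n-1}$.

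Third, I would analyze the complement $W := M \setminus \mathrm{int}(D(\nu))$, a compact $2n$-manifold with $\partial W \cong S^{2n-1}$. Using the Mayer--Vietoris sequence for $M = D(\nu) \cup W$, together with the $3$-connectedness of $N \inj M$ and the computed cohomology of $D(\nu)$, I would verify that $W$ has the integral homology of a point and is simply connected, hence contractible. Applying the topological Poincar\'e conjecture in the relevant dimension (Newman/Smale for $2n \geq 5$ and Freedman for $2n = 4$), $W$ is homeomorphic to the disk $D^{2n}$. Gluing $D^{2n}$ back onto $D(\nu)$ along $S^{2n-1}$ reconstructs $M$ as the standard $\CP^n$, because any self-homeomorphism of $S^{2n-1}$ extends over $D^{2n}$ (by coning), so the attaching map is irrelevant up to homeomorphism.

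The main obstacle I expect is verifying the contractibility of $W$ rigorously: the Mayer--Vietoris calculation needs the pair $(M, D(\nu))$ to be sufficiently well-behaved, and one must be careful with Poincar\'e--Lefschetz duality for the manifold-with-boundary $W$ to conclude simple connectivity and vanishing of all homology. A secondary subtlety is the low-dimensional case $n = 2$, where the $3$-connectedness hypothesis is especially strong and Freedman's work is needed to pass from contractibility of $W$ to a homeomorphism $W \cong D^4$; this is precisely why the conclusion is stated only up to homeomorphism rather than diffeomorphism.
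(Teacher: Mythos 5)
The paper does not supply a proof of this statement; it attributes it to Fang and Rong \cite{FangRong04} and uses it as a black box. So there is no in-paper argument to compare your proposal against, and I will just evaluate the proposal on its own terms.

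Your strategy — decompose $M$ as a tubular neighborhood of $N$ glued to its complement, identify the tubular neighborhood as the standard disk bundle over $\CP^{n-1}$, show the complement $W$ is a topological disk, and then appeal to the Alexander trick — is sound and does yield the result. However, one step is not justified as written: invoking the Periodicity Lemma with $d=2$ and $l=1$ requires the inclusion $N\inj M$ to be $(2n-3)$-connected, which is strictly stronger than the assumed $3$-connectedness once $n\geq 4$. The claim you need — that the Poincar\'e dual $e\in H^2(M;\bZ)$ of $[N]$ is a generator — is nonetheless true and follows directly from Poincar\'e duality and the projection formula: since $i^*\colon H^2(M;\bZ)\to H^2(N;\bZ)$ is an isomorphism by $3$-connectedness, a generator $u$ of $H^2(M;\bZ)$ has $(i^*u)^{n-1}$ generating $H^{2n-2}(N;\bZ)$; then $\langle u^{n-1}\cup e,[M]\rangle=\langle i^*u^{n-1},[N]\rangle=\pm1$, so $u^{n-1}\cup e$ generates $H^{2n}(M;\bZ)$ and hence $e=\pm u$. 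With this repair, the rest of the outline (Mayer--Vietoris plus Lefschetz duality to see $W$ is contractible with boundary $S^{2n-1}$, the topological Poincar\'e conjecture to recognize $W\cong D^{2n}$, and coning to remove dependence on the attaching homeomorphism) goes through. One small correction to your closing remark: the conclusion is homeomorphism rather than diffeomorphism not only because of $n=2$; even in high dimensions both the hypothesis that $N$ is only \emph{homeomorphic} to $\CP^{n-1}$ and the Alexander-trick gluing are intrinsically topological, so smooth rigidity would require a different argument.
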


\section{Maximal symmetry rank for $\mathbf{Ric_2 > 0}$ in dimension 4}\label{sec:dim4}
	
	In this section, we establish the four dimensional case of Theorem \ref{result:k=2}.
	First, we survey the topology of the spaces in question without curvature considerations.
	
	Throughout this section, let $M$ be a closed, $4$-dimensional, simply connected, $\gT^2$-manifold, and let $M^*$ denote the orbit space $M/\gT^2$. 
	The orbit structure of such spaces were studied by Orlik and Raymond in \cite{OrlikRaymond70}, which we will summarize here.
	For the manifolds $M$ under consideration, the isotropy groups are connected, meaning possible isotropy groups are either trivial or isomorphic to $\gS^1$ or $\gT^2$; see Lemma 5.2 in \cite{OrlikRaymond70}.
	The orbit space $M^*$ is homeomorphic to a closed $2$-dimensional disk, and the boundary $\partial M^*$ consists of a cycle (graph) with the number of vertices equal to the Euler characteristic of $M$.
	We will assume an orientation on $M^*$, and hence on $\partial M^*$, and we will accordingly fix an enumeration for the vertices of $\partial M^*$: $f_0^*, f_1^* , \dots , f_{t-1}^*$, where $t = \chi(M)$.
	Each vertex $f_i^*$ in $\partial M^*$ corresponds to an isolated fixed point $f_i\in M$ of the $\gT^2$-action.
	Let $\Sigma_i^*$ denote the edge connecting $f_i^*$ to $f_{i+1}^*$ (counting mod $t$). 
	Points along $\Sigma_i^*$ correspond to $1$-dimensional orbits in $M$, all of which have the same isotropy group, which is isomorphic to $\gS^1$.
	In other words, $\Sigma_i^*$ corresponds to a $2$-dimensional sphere $\Sigma_i$ in $M$ that is fixed by a circle subgroup of $\gT^2$.
	Fixing a parametrization $(z_1,z_2)$ of $\gT^2 = \bR^2/\bZ^2$, each $\gS^1$ isotropy is equal to $\{(z_1,z_2):m z_1 + n z_2 = 0\}$ for some relatively prime integers $m$ and $n$.
	For a given $\gS^1$ isotropy, the associated vector $(m,n)\in \bZ^2$ is unique up to sign. 
	Given an edge $\Sigma_i^*$ of $\partial M^*$, we will call the vector $(m_i,n_i)$ that corresponds to the $\gS^1$ isotropy of the edge the \textit{weight} of $\Sigma_i^*$; see Figure \ref{fig:orbitspace}.
	
	\begin{figure}[ht]
		\includegraphics[width=0.4\textwidth]{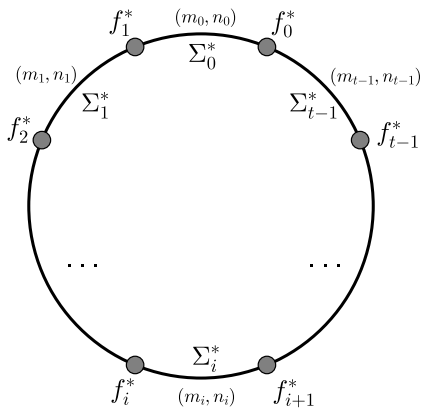}
		\caption{Weighted orbit space of closed simply connected $4$-dimensional $\gT^2$-manifold.}
		\label{fig:orbitspace}
		% Exporting from Geogebra: 1 unit = 5 cm
	\end{figure}
	
	Given two adjacent edges $\Sigma_{i-1}^*$ and $\Sigma_i^*$, the common vertex $f_i^*$ corresponds to a point of intersection $f_i$ between the two spheres $\Sigma_{i-1}$ and $\Sigma_i$ in $M$.
	Define the determinant of the weights of these edges
	\[
		\ep_i \defeq \det\begin{bmatrix} m_{i-1} & m_i \\ n_{i-1} & n_i \end{bmatrix}.
	\]
	Because the $\gS^1$ isotropies of these spheres must generate the homology of $\gT^2$, it follows that the determinant of the weights must satisfy $\ep_i = \pm 1$.
	
	More generally, given (not necessarily adjacent) edges $\Sigma_i^*$ and $\Sigma_j^*$ in $\partial M^*$, we will denote the determinant of their weights by
	\[
		r_{i,j} \defeq \det\begin{bmatrix} m_i & m_j \\ n_i & n_j \end{bmatrix}.
	\]
	Notice that $r_{i-1,i} = \ep_i = \pm 1$ for all $i$ (counting mod $t$) and that $r_{i,j} = 0$ for some $i$ and $j$ {if and only if } the corresponding spheres $\Sigma_i$ and $\Sigma_j$ in $M$ are fixed by the same circle subgroup of $\gT^2$.
	
	For the next few remarks, the isomorphisms mentioned are in the category of equivariant diffeomorphisms.
	If the number of fixed points $t = 2$, then $M \cong S^4$.
	If $t=3$ and $-\ep_0 \ep_1 \ep_2 = 1$ (resp. $-1$), then $M\cong \CP^2$ (resp. $\mCP{2}$).
	Note, when a parametrization of $\gT^2$ is specified, an orientation of $M^*$ determines an orientation of $M$, and vice versa.
	For the case $t=4$, $M\cong \CP^2 \# \CP^2$, $\mCP{2} \# \mCP{2}$, $S^2 \times S^2$, or $\CP^2 \# \mCP{2}$ depending on the values of $\ep_0, \ep_1, \ep_2, \ep_3, r_{0,2},$ and $r_{1,3}$.
	The conditions on $\ep_i$ and $r_{i,j}$ that determine $M$ for $t\leq 4$ are given in Table \ref{table:dim4} (see \cite[page 552]{OrlikRaymond70}):
	
	\begin{table}[H]
		\begin{tabular}{||c|c|l||}
			\hline
			$t$ & $M$ & \textit{Condition} \\
			\hline
			\hline
			$2$ & $S^4$ & \\
			\hline
			$3$ & $\CP^2$ & $-\ep_0 \ep_1 \ep_2 = +1$ \\
			& $\mCP{2}$ & $-\ep_0 \ep_1 \ep_2 = -1$ \\
			\hline
			$4$ &  $\CP^2 \# \CP^2$ & $-\ep_0 \ep_1 \ep_2 \ep_3 = +1$, and $r_{1,3} \in \{\ep_2 \ep_3, 2\ep_2 \ep_3\}$ \\
			& $\mCP{2} \# \mCP{2}$ & $-\ep_0 \ep_1 \ep_2 \ep_3 = +1$, and $r_{1,3} \in \{-\ep_2 \ep_3, -2\ep_2 \ep_3\}$ \\
			& $S^2 \times S^2$ & $-\ep_0 \ep_1 \ep_2 \ep_3 = -1$, and both $r_{0,2}$ and $r_{1,3}$ are even (at least one is $0$)\\
			& $\CP^2 \# \mCP{2}$ & $-\ep_0 \ep_1 \ep_2 \ep_3 = -1$, and either $r_{0,2}$ or $r_{1,3}$ is odd (the other one is $0$) \\
			\hline
		\end{tabular}
		\label{table:dim4}
		\caption{Equivariant diffeomorphism classes of simply connected $4$-dimensional $\gT^2$-manifolds with Euler characteristic $t \leq 4$.}
	\end{table}
	
	For the cases when $t\geq 5$, $M$ is equivariantly diffeomorphic to a connected sum of finitely many copies of $S^2\times S^2$, $\CP^2$, and $\mCP{2}$.
	In particular, for every pair of adjacent edges $\Sigma_{i-1}^*$ and $\Sigma_i^*$ in $\partial M^*$, there exists a third distinct edge $\Sigma_j^*$ such that
	\begin{enumerate}
		\item $r_{i,j} = \pm 1$ and $\Sigma_j^*$ is not adjacent to $\Sigma_i^*$, or \label{item:rij}
		\item $r_{i-1,j} = \pm 1$ and $\Sigma_j^*$ is not adjacent to $\Sigma_{i-1}^*$. \label{item:ri-1j}
	\end{enumerate}
	In the case of (\ref*{item:rij}), one can connect an interior point of edge $\Sigma_i^*$ to an interior point of edge $\Sigma_j^*$ using a simple curve $L^*$ through the interior of $M^*$.
	This curve $L^*$ separates $M^*$ into two disjoint regions, the closures of which we will denote by $X_1^*$ and $X_2^*$.
	For $k=1$ or $2$, consider $N_k^* \defeq X_k^*/\{L^* \sim \mathrm{pt.}\}$, i.e. the disk obtained from $X_k^*$ by identifying the portion of its boundary containing $L^*$ to a point.
	The edges of $\partial N_k^*$ inherit weights from corresponding edges on $\partial M^*$; see Figure \ref{fig:decomp}.
	Then $N_k^*$ corresponds to the orbit space $N_k/\gT^2$ of some closed, simply connected, $4$-dimensional $\gT^2$-manifold with $3 \leq \chi(N_k) \leq \chi(M) - 1$.
	The curve $L^*$ in $M^*$ corresponds to an invariant $3$-sphere $L$ in $M$.
	In particular, $M$ is equivariantly diffeomorphic to the connected sum $N_1 \# N_2$, where the gluing occurs along $L$.
	Case (\ref*{item:ri-1j}) above leads similarly to a decomposition of $M$ into a connected sum $N_1' \# N_2'$ for some $N_k'$ with $3 \leq \chi(N_k') \leq \chi(M) - 1$.
	
	\begin{figure}[ht]%[H]
		\includegraphics[width=\textwidth]{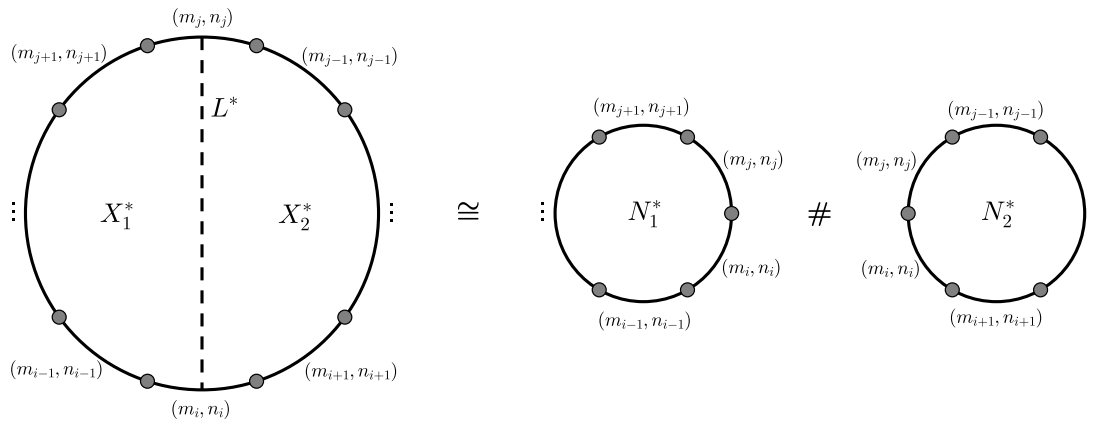}
		\caption{Decomposing the orbit space of a simply connected $4$-dimensional $\gT^2$-manifold with Euler characteristic $\geq 5$.}
		\label{fig:decomp}
		% during export from Geogebra, 1 unit = 5 cm
	\end{figure}

	%Notice that in $N_1^*$, the value of $\ep$ for the edges that meet at the point corresponding to $L^*$ (namely $\Sigma_i^*$ and $\Sigma_j^*$) will be opposite of the value of $\ep$ at $L^*$ in $N_2^*$.
	Notice that in $N_1^*$, the {$\epsilon$-value } for the edges that meet at the point corresponding to $L^*$ (namely $\Sigma_i^*$ and $\Sigma_j^*$) will be {negative } of the {$\ep$-value } at $L^*$ in $N_2^*$.
	Note that this type of decomposition of $M$ can also be carried out if $M \cong \CP^2 \# \CP^2$ or $\mCP{2} \# \mCP{2}$, but not for $S^2 \times S^2$ or $\CP^2 \# \mCP{2}$; c.f. Remark 5.10 in \cite{OrlikRaymond70}.
	Therefore, if $\chi(M) \geq 5$, then by repeating the procedure outlined above, $M^*$ can always be partitioned into finitely many pieces, each corresponding to $\CP^2$, $\mCP{2}$, or $S^2\times S^2$. 
	Furthermore, in such a decomposition $M \cong N_1 \# \dots N_m$, given a pair $N_i, N_{i+1}$ for $1\leq i\leq m-1$, the decomposition is done in such a way that the vertices on $\partial N_i^*$ and $\partial N_{i+1}^*$ at which the gluing $N_i \# N_{i+1}$ occurs have opposite signs for $\ep$.
	
	Now we establish curvature obstructions for the above spaces.
	In particular, our key observation is the following:
	
	\begin{lemma}\label{lem:disjointspheres}
		Let $M$ be a compact, connected, $4$-dimensional Riemannian manifold with $\pr{2}$.
		If $\gS^1$ acts effectively and by isometries on $M$ and fixes a $2$-dimensional submanifold $N$ pointwise, then $N$ must be connected.
	\end{lemma}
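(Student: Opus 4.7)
The plan is to apply the enhanced version of the Connectedness Lemma (Theorem \ref{thm:connprinc}(1)) directly to the fixed-point set $N = M^{\gS^1}$, treated as a single (a priori possibly disconnected) embedded totally geodesic submanifold of $M^4$. The key observation is that for the specific parameters in this situation, the enhancement by the symmetry group $\gS^1$ raises the connectivity of the inclusion $N \hookrightarrow M$ by exactly the one unit needed to force a bijection on $\pi_0$, and this will conclude the argument.

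Concretely, $N$ is totally geodesic as the fixed set of an isometric action, and by hypothesis it has dimension $2$ in $M^4$, so its codimension is $d = 2$. Because the $\gS^1$-action is effective and $N \neq M$, the action is nontrivial, so its principal orbits are one-dimensional, giving $\delta(\gS^1) = 1$. Substituting $n = 4$, $d = 2$, $k = 2$, and $\delta(\gS^1) = 1$ into the enhanced bound of Theorem \ref{thm:connprinc}(1), I would conclude that the inclusion
\[
N \hookrightarrow M \ \text{ is }\ (n - 2d + 2 - k + \delta(\gS^1)) = 1\text{-connected}.
\]
In particular, the induced map $\pi_0(N) \to \pi_0(M)$ is a bijection, and since $M$ is connected, $N$ must be connected as well.

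The only subtle point worth flagging is that this argument uses the Connectedness Lemma in the case where $N$ is not assumed connected; this is legitimate because the statement of Theorem \ref{thm:connprinc}(1) requires only that $N$ be an embedded totally geodesic submanifold, and the standard distance-function Morse-theoretic proof proceeds uniformly over all components of $N$. If one prefers to state the lemma only for connected $N$, the same reasoning applies verbatim to each connected component $N_i$ of $N$, yielding the existence of at most one such component, since otherwise applying the enhanced lemma to the disjoint union of any two components $N_i \sqcup N_j$ (still embedded, totally geodesic, of pure codimension $2$, and pointwise fixed by $\gS^1$) would give a $1$-connected inclusion into the connected space $M$, contradicting $|\pi_0(N_i \sqcup N_j)| = 2$. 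Either way, no further ingredients beyond the Connectedness Lemma are required; this is the main (and essentially the only) obstacle, and once that step is in place the conclusion is immediate.
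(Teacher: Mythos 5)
Your proof is correct and takes essentially the same approach as the paper: both apply the enhanced Connectedness Lemma with $n=4$, $d=2$, $k=2$, $\delta(\gS^1)=1$ to conclude the inclusion $N\hookrightarrow M$ is $1$-connected, hence a $\pi_0$-bijection onto connected $M$. The remark about applying the lemma to a possibly disconnected $N$ is a fair point of care, and your fallback (pairs of components) resolves it cleanly.
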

	
	\begin{proof}
		Since $\gS^1$ acts effectively on $M$, the principal orbits are $1$-dimensional. 
		By Theorem \ref{thm:connprinc}, the inclusion $N \inj M$ is $1$-connected.
		In particular, $N$ is connected since $M$ is.
	\end{proof}
	
	\begin{corollary} \label{cor:nospherebundles}
		Let $M$ be a compact, simply connected, $4$-dimensional Riemannian manifold with $\pr{2}$.
		If $\gT^2$ acts effectively and by isometries on $M$, and if $(m_i,n_i)$ and $(m_j,n_j)$ are weights for non-adjacent edges of $\partial M^*$, then {$r_{i,j} \neq 0$. }
%		\[
%			r_{i,j} = \det\begin{bmatrix} m_i & m_j \\ n_i & n_j \end{bmatrix} \neq 0.
%		\]
		In particular, neither $S^2 \times S^2$ nor $\CP^2 \# \mCP{2}$ admit a metric with $\pr{2}$ that is invariant under a $\gT^2$-action.
	\end{corollary}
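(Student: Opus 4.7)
The plan is to deduce both claims from Lemma \ref{lem:disjointspheres}. For the first, I would argue by contradiction, assuming $r_{i,j} = 0$ for some pair of non-adjacent edges $\Sigma_i^*$ and $\Sigma_j^*$. By definition of the weights and $r_{i,j}$, this vanishing means the vectors $(m_i,n_i)$ and $(m_j,n_j)$ are parallel, so the circle $\gS^1 \subseteq \gT^2$ fixing $\Sigma_i$ pointwise coincides with the one fixing $\Sigma_j$ pointwise. This $\gS^1$ acts effectively on $M$, because any nontrivial element of its kernel would also lie in the kernel of the $\gT^2$-action, which is trivial by assumption.

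Next I would examine the fixed-point set $M^{\gS^1}$. Since $\gS^1$ acts non-trivially on the connected manifold $M^4$ and its fixed-point components have even codimension, every component has dimension $0$ or $2$. Both $\Sigma_i$ and $\Sigma_j$ are contained in $M^{\gS^1}$; each is a closed, connected $2$-dimensional submanifold, so each coincides with a single $2$-dimensional connected component of $M^{\gS^1}$ (a closed connected submanifold of the same dimension as the ambient component is clopen, hence all of it). Because distinct edges of $\partial M^*$ lift via $M \to M^*$ to distinct spheres in $M$, the spheres $\Sigma_i$ and $\Sigma_j$ are distinct, hence disjoint, connected components of $M^{\gS^1}$. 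Consequently $N \defeq \Sigma_i \cup \Sigma_j$ is a disconnected $2$-dimensional submanifold of $M$ fixed pointwise by the effective $\gS^1$-action, which directly contradicts Lemma \ref{lem:disjointspheres}.

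For the \emph{in particular} statement, I would appeal to Table \ref{table:dim4}: each of $S^2 \times S^2$ and $\CP^2 \# \mCP{2}$ has Euler characteristic $4$, and the Orlik--Raymond classification forces at least one of $r_{0,2}$ or $r_{1,3}$ to vanish in either case. Since the pairs $(\Sigma_0^*, \Sigma_2^*)$ and $(\Sigma_1^*, \Sigma_3^*)$ are non-adjacent in a $4$-cycle, the first part of the corollary produces the required contradiction. The step I expect to require the most care is ensuring that $\Sigma_i$ and $\Sigma_j$ occur as \emph{distinct} connected components of $M^{\gS^1}$ rather than sitting inside a single larger fixed component; this relies on dimension reasons together with the Orlik--Raymond observation that distinct boundary edges of $M^*$ lift to distinct invariant $2$-spheres in $M$, after which the appeal to Lemma \ref{lem:disjointspheres} is immediate.
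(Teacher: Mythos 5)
Your argument is correct and follows essentially the same route as the paper's own proof: observe that $r_{i,j} = 0$ forces the same circle subgroup to fix the two disjoint spheres $\Sigma_i$ and $\Sigma_j$, invoke Lemma~\ref{lem:disjointspheres} to obtain the contradiction, and then read off the $S^2 \times S^2$ and $\CP^2 \# \mCP{2}$ cases from Table~\ref{table:dim4}. The extra care you take with effectiveness of the subcircle and with the distinctness of the two fixed spheres is sound but is precisely the content implicit in the paper's terser phrasing.
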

	
	\begin{proof}
	{
	If some $r_{i,j} = 0$, then the weight vectors at the corresponding edges of $\partial M^*$ are parallel, the circle isotropy groups associated with these edges agree, and hence this circle has fixed-point set in $M$ consisting of at least two two-dimensional components, a contradiction to Lemma \ref{lem:disjointspheres}.
	}
	%The first statement follows from Lemma \ref{lem:disjointspheres} and the definition of weights.
		From Table \ref{table:dim4}, the orbit structure of $S^2 \times S^2$ and $\CP^2 \# \mCP{2}$ require that $r_{i,j} = 0$ for $(i,j) = (0,2)$ or $(1,3)$.
		Thus, these manifolds cannot support a metric with $\pr{2}$ that is invariant under a $\gT^2$-action.
	\end{proof}
	
	We can now establish the four-dimensional case of Theorem \ref{result:k=2}.
	
	\begin{theorem}\label{thm:dim4}
		Let $M$ be a closed, simply connected, $4$-dimensional Riemannian manifold with $\pr{2}$.
		If $\gT^2$ acts effectively and by isometries on $M$, then $M$ is equivariantly diffeomorphic to $\#_{i=1}^b \CP^2$ for some $b\geq 0$.
	\end{theorem}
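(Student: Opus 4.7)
The plan is to combine the Orlik--Raymond classification summarized before Corollary \ref{cor:nospherebundles} with the constraint that Corollary \ref{cor:nospherebundles} imposes on the weighted orbit space. By that classification, $M$ is equivariantly diffeomorphic to an iterated connected sum whose pieces are drawn from $\{S^4, \CP^2, \mCP{2}, S^2\times S^2, \CP^2\#\mCP{2}\}$, with the combinatorics determined by the signs $\ep_i$ and the determinants $r_{i,j}$ on $\partial M^*$ as in Table \ref{table:dim4}. Under $\pr{2}$, Corollary \ref{cor:nospherebundles} forces $r_{i,j}\neq 0$ for every non-adjacent pair, which is a purely combinatorial restriction, so the argument reduces to proving inductively on $t=\chi(M)$ that this restriction forces all pieces to be $\CP^2$ (up to orientation reversal, which is an equivariant diffeomorphism).

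First I would dispatch the base cases $t\leq 4$ directly from Table \ref{table:dim4}. For $t\in\{2,3\}$ the manifold is $S^4$, $\CP^2$, or $\mCP{2}$, and in the last two cases orientation reversal gives an equivariant diffeomorphism to $\CP^2 = \#^1 \CP^2$. For $t=4$, the two rows with $-\ep_0\ep_1\ep_2\ep_3=-1$ (corresponding to $S^2\times S^2$ and $\CP^2\#\mCP{2}$) are immediately eliminated because both require $r_{0,2}=0$ or $r_{1,3}=0$; the remaining two rows $\CP^2\#\CP^2$ and $\mCP{2}\#\mCP{2}$ are equivariantly diffeomorphic to $\#^2\CP^2$.

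For the inductive step $t\geq 5$, I would invoke Orlik and Raymond's decomposition: there is an equivariant decomposition $M\cong N_1\#N_2$ along an invariant $3$-sphere $L$, with $3\leq \chi(N_k)\leq t-1$ for each $k$. The key point is that the combinatorial hypothesis transfers to each summand. Indeed, $\partial N_k^*$ is obtained from a path in $\partial M^*$, consisting of the edges of $\partial M^*$ lying in $X_k^*$ together with the two partial edges at the cut, by collapsing the two endpoints of that path to a single vertex; the weights along $\partial N_k^*$ are inherited from $\partial M^*$. The collapse only creates one new adjacency, so any two edges that are non-adjacent in $\partial N_k^*$ are also non-adjacent in $\partial M^*$, and hence their $r$-determinant (computed from the inherited weights) is nonzero. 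Thus $N_1$ and $N_2$ also satisfy the combinatorial restriction, and by the inductive hypothesis each $N_k$ is equivariantly diffeomorphic to $\#^{b_k}\CP^2$. The equivariant connected sum then yields $M\cong \#^{b_1+b_2}\CP^2$.

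The main obstacle is careful bookkeeping of weights and adjacencies when passing to the inductive summands: one must confirm that non-adjacency in $\partial N_k^*$ implies non-adjacency in $\partial M^*$, that the weights of the two partial edges at the new vertex do indeed satisfy $|\det|=1$ (which is exactly the condition Orlik and Raymond use to produce the decomposition in the case $r_{i,j}=\pm 1$), and that the equivariant connected sum of two copies of $\#^{b_k}\CP^2$ along an invariant $3$-sphere in each respects the Orlik--Raymond classification so that the result is $\#^{b_1+b_2}\CP^2$ rather than being tainted by an $\mCP{2}$ summand. Orientation plays no role in the final conclusion because the theorem only claims equivariant diffeomorphism and orientation reversal is an equivariant diffeomorphism.
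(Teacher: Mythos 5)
Your proposal follows essentially the same route as the paper: the Orlik--Raymond weighted-orbit-space decomposition combined with Corollary \ref{cor:nospherebundles}, driven by the observation that non-adjacency of edges in $\partial N_k^*$ is inherited from $\partial M^*$. The paper performs the decomposition into atomic pieces in one pass, you do one cut at a time; that difference is cosmetic, and your transfer-of-nonadjacency observation is exactly the key step in the paper's argument.

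The one genuine issue is the concluding sentence ``Orientation plays no role in the final conclusion \ldots'', which is not correct and elides a real step. Reversing the orientation of all of $M$ shows $\#^b\CP^2 \cong \#^b\mCP{2}$, but one cannot reverse the orientation of a single connected summand independently: $\CP^2\#\mCP{2}$ is not diffeomorphic to $\CP^2\#\CP^2$, as their intersection forms differ. Consequently the inference ``$N_1\cong\#^{b_1}\CP^2$, $N_2\cong\#^{b_2}\CP^2$, therefore $M = N_1\#N_2 \cong \#^{b_1+b_2}\CP^2$'' is not automatic: a priori one could obtain $(\#^{b_1}\CP^2)\#(\#^{b_2}\mCP{2})$ if the orientations induced on $N_1$ and $N_2$ from $M$ disagree at the gluing. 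You correctly flag this earlier in the same paragraph as something that must be confirmed, but the final sentence dismisses it. This is precisely the content behind the paper's statement that the summands ``all have the same orientation.'' To close the gap, note that a four-vertex sub-orbit-space producing a $\CP^2\#\mCP{2}$ summand would force $r_{0,2}=0$ or $r_{1,3}=0$ by Table \ref{table:dim4}, and since the relevant non-adjacent pairs in such a sub-orbit-space remain non-adjacent in $\partial M^*$, this too is ruled out by Corollary \ref{cor:nospherebundles}. Adding this orientation-consistency argument completes your inductive step.
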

	
	\begin{proof}
		Fix a parametrization of $\gT^2$ and an orientation of $M$, which then fixes an orientation of $M^*$.
		If $\chi(M) \leq 4$, then the only candidates are $S^4$, $\CP^2$, $S^2 \times S^2$, $\CP^2 \# \mCP{2}$, or $\CP^2 \# \CP^2$, up to a change in orientation.
		However, by Corollary \ref{cor:nospherebundles}, neither $S^2 \times S^2$ nor $\CP^2 \# \mCP{2}$ can admit an invariant metric with $\pr{2}$.
		
		If $\chi(M) \geq 5$, then following the procedure outlined in the beginning of this section, there exist non-adjacent edges of the boundary $\partial M^*$ of the orbit space whose respective weights have determinant $\pm 1$. 
		The orbit space $M^*$ can then be separated along a curve joining these two edges, and accordingly, $M$ decomposes as $N_1 \# N_2$ for some closed, simply connected, $4$-dimensional $\gT^2$-manifolds $N_k$ with $3 \leq \chi(N_k) \leq \chi(M) - 1$, for $k=1,2$.
		The weights of the edges of the boundaries $\partial N_k^*$ are inherited from the corresponding edges in $\partial M^*$, along with the orientations of their boundaries.
		This process can be repeated until $M$ is written as a connected sum $N_1 \# \dots \# N_m$ such that each $N_k$ if equivariantly diffeomorphic to $\CP^2$, $\mCP{2}$, or $S^2\times S^2$. 
		
		Because the weights of the edges of the boundaries $\partial N_k^*$ are inherited from edges of $\partial M^*$, if $\partial N_k^*$ has non-adjacent edges whose weights have determinant zero for some $k$, then so does $\partial M^*$.
		Thus, by Corollary \ref{cor:nospherebundles}, each space $N_k$ must be a complex projective space, and furthermore, they all have the same orientation.
	\end{proof}

\section{Maximal symmetry rank for $\mathbf{Ric_2 > 0}$ in dimension 6}\label{sec:dim6}
	
	In this section, we establish the six-dimensional case of Theorem \ref{result:k=2}. In contrast to the case of positive sectional curvature ($\pr 1$), the $\gT^3$-action on $M^6$ need not have a fixed point. Additionally, it does not follow immediately from the Connectedness and Periodicity Lemmas that the second Betti number satisfies $b_2(M) \leq 1$ as it does in the positive sectional curvature case. 
	The following is the case in which we can argue that $b_2(M) \leq 1$:
	
	\begin{proposition}\label{prop:dim6fixedpt}
		Let $M$ be a $6$-dimensional, closed, simply connected Riemannian manifold with $\pr{2}$.
		Suppose $\gT^3$ acts effectively and by isometries on $M$.
		If the $\gT^3$-action has a fixed point, then $M$ is diffeomorphic to $S^6$ or $\CP^3$.
	\end{proposition}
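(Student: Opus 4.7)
The plan is to exploit the $\gT^3$-fixed point to produce a codimension-$2$ totally geodesic submanifold, and then run a Connectedness/Periodicity argument to pin down the cohomology of $M$, upgrading rigidity via the theorems from the preliminaries.

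At $p \in M^{\gT^3}$, the isotropy representation decomposes $T_pM$ into three $2$-dimensional irreducible $\gT^3$-summands with integer weights $\alpha_1,\alpha_2,\alpha_3$. Effectiveness forces the three weights to be linearly independent, so for any pair $\{i,j\}$ the circle $H_{ij} := (\ker\alpha_i\cap\ker\alpha_j)^0 \subset \gT^3$ acts trivially on two of the weight spaces and nontrivially on the third. Its fixed-point component $F := M_p^{H_{ij}}$ is thus a $4$-dimensional totally geodesic submanifold of $M$, fixed pointwise by $H_{ij}$ and invariant under the induced action of $\gT^3/H_{ij}\cong\gT^2$. Applying Theorem \ref{thm:connprinc}(1) with $\gG = H_{ij}$ (so $\delta(\gG)=1$) gives that the inclusion $F \inj M$ is $3$-connected; in particular $F$ is simply connected.

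Let $e \in H^2(M;\bZ)$ be the Poincar\'e dual of $[F]$ and apply Lemma \ref{lem:periodic} with $d=2$ and $l=1$. If $e = 0$, surjectivity at $i=1,2$ together with injectivity at $i=2,3$ forces $H^2(M) = H^3(M) = H^4(M) = 0$; Poincar\'e duality then yields $H^5(M) = 0$, so $M$ is an integral cohomology sphere. Theorem \ref{thm:CohomologyToHomotopy}(1) gives $M \simeq S^6$, and Theorem \ref{thm:homotopysphere} (applied to $F$, which is simply connected and of codimension two) upgrades this to a diffeomorphism. If instead $e \neq 0$, the Periodicity Lemma gives $H^3(M)=0$ and $\cup e \colon H^2(M) \to H^4(M)$ is an isomorphism; since $F \inj M$ is $3$-connected, $H^2(M) \cong H^2(F)$, and Theorem \ref{thm:dim4} yields $F \cong \#_{i=1}^{b}\CP^2$ with $b \geq 1$ (the case $b=0$ is excluded by $e \neq 0$).

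The hard part is to show $b = 1$. The plan is to use all three circles $H_{12}, H_{13}, H_{23}$ simultaneously, producing $4$-dimensional fixed-point components $F_{12}, F_{13}, F_{23}$ through $p$, each satisfying $H^2(F_{ij}) \cong H^2(M) \cong \bZ^b$. By Theorem \ref{thm:connprinc}(2), each pairwise intersection is a connected $2$-dimensional totally geodesic submanifold, necessarily a $2$-sphere, and sits inside each $F_{ij}$ as a codimension-$2$ fixed-point set of a circle in the residual $\gT^2$-action. Combining this geometric configuration with the orbit-space classification of Section \ref{sec:dim4} rules out $b \geq 2$, leaving $F \cong \CP^2$ and hence $H^*(M) \cong H^*(\CP^3)$. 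Theorem \ref{thm:CohomologyToHomotopy}(2) then gives $M \simeq \CP^3$, Theorem \ref{thm:homotopyCPn} gives a homeomorphism, and standard smooth classification results for simply connected $6$-manifolds upgrade this to a diffeomorphism. The delicate step is the $b=1$ argument: the Periodicity Lemma alone does not control the rank of $H^2(M)$, so one must exploit the full $\gT^3$-structure at $p$ in conjunction with the dimension-$4$ classification.
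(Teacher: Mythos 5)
Your setup mirrors the paper's quite closely: using the isotropy representation at $p\in M^{\gT^3}$ to produce three codimension-two totally geodesic submanifolds through $p$, applying the Connectedness and Periodicity Lemmas, and handling the two cases $e=0$ (sphere) and $e\neq 0$ (projective-type). The upgrades from cohomology to homotopy to diffeomorphism/homeomorphism via Theorems \ref{thm:CohomologyToHomotopy}, \ref{thm:homotopysphere}, \ref{thm:homotopyCPn} are also used as in the paper.

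However, there is a genuine gap precisely where you flag it: you never actually prove $b=1$. The assertion that ``combining this geometric configuration with the orbit-space classification of Section \ref{sec:dim4} rules out $b\geq 2$'' is not an argument, and it cannot be made into one along the lines you indicate, because the dimension-four classification in Theorem \ref{thm:dim4} does not forbid large $b$ — it explicitly allows $F\cong\#_b\CP^2$ for every $b\geq 0$. Nothing in the two-dimensional intersections $F_{ij}\cap F_{kl}$ being $2$-spheres or in the weighted orbit graph of a single $F_{ij}$ distinguishes $b=1$ from $b=3$. Some additional quantitative input from the ambient $6$-manifold is needed, and your proposal does not supply it.

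The paper closes this gap with an Euler-characteristic inclusion–exclusion argument driven by Conner's Betti Number Lemma (Lemma \ref{lem:Conner}), not by the dimension-four classification at all. From the $3$-connected inclusions $N_i\inj M$ one gets $\chi(M)=2+2b$ and $\chi(N_i)=2+b$; each pairwise intersection $N_i\cap N_j$ is a $2$-sphere with $\chi=2$; and since the $\gT^3$-invariant union $N_1\cup N_2\cup N_3$ has fixed-point set contained in the finite set $M^{\gT^3}$, one has $\chi(M)\geq \chi(N_1\cup N_2\cup N_3) = 3(2+b)-3(2)+\chi(N_1\cap N_2\cap N_3)$. Since the triple intersection is a nonempty finite set of fixed points, $\chi(N_1\cap N_2\cap N_3)\geq 1$, and the inequality collapses to $b\leq 1$. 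You would need to add this (or an equivalent) argument to make the proposal a complete proof; once $b\leq 1$ is established, the rest of your outline is correct and essentially matches the paper.
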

	
	\begin{proof}
		Suppose $p$ is a fixed point for the $\gT^3$-action on $M$.
		{
		Because the $\gT^3$-action on $M$ is effective, the isotropy representation of $\gT^3$ on the normal space to $p$ is faithful and hence has complex dimension at least three. In other words, $p$ is an isolated fixed point. At $p$, 
		}
		%Then at $p$, 
		the isotropy representation $\gT^3 \to \gU(3) \subset \gO(6)$ is of the form $(z_1,z_2,z_3) \mapsto \diag(z_1,z_2,z_3) \in \gU(3)$ for some choice of parametrization of $\gT^3$ and basis in $T_pM$.
		For each $i\in\{1,2,3\}$, define $\gS_i^1 \subset \gT^3$ to be the circle subgroup parametrized by $z_i$ in this representation, and let $N_i$ denote the four-dimensional fixed-point component $M_p^{\gS_i^1}$.

		By Theorem \ref{thm:connprinc}, the inclusions $N_i^4 \inj M^6$ are $3$-connected. This implies that $N_i$ is simply connected since $M$ is, that $H^2(N_i;\bZ) \cong H^2(M;\bZ)$ for all $i$, and that $H^3(M^6;\bZ) = 0$ since it injects into $H^3(N_i^4;\bZ)$, which is zero by Poincar\'e duality. 
		In particular, defining $b = b_2(M^6)$, we have $\chi(M^6) = 2 + 2b$ and $\chi(N_i^4) = 2 + b$ for all $i$.
		
		Furthermore, by Theorem \ref{thm:connprinc}, the inclusions $N_i \cap N_j \inj N_j$ are $1$-connected, and in particular, $N_i \cap N_j$ is connected for all $i\neq j$. 
		Also, each $2$-dimensional intersection $N_i \cap N_j$ is orientable by Lemma \ref{lem:torusfpc} and has an effective $\gS^1$ action with non-empty fixed-point set that contains $p$.
		Thus, each $N_i \cap N_j$ is a $2$-sphere, and $\chi(N_i \cap N_j) = 2$ for all $i\neq j$.
		
		Because $(N_1 \cup N_2 \cup N_3)^{\gT^3} \subseteq M^{\gT^3}$, by Lemma \ref{lem:Conner} we have
		\begin{align*}
		\chi(M) &\geq \chi(N_1 \cup N_2 \cup N_3) \\
		&= \sum_{i} \chi(N_i) - \sum_{i<j} \chi(N_i \cap N_j) + \chi(N_1 \cap N_2 \cap N_3)\\
		&= 3(2+b) - 3(2) + \chi(N_1 \cap N_2 \cap N_3).
		\end{align*}
		Since $\chi(M) = 2 + 2b$ and $N_1 \cap N_2 \cap N_3$ is a non-empty collection of isolated fixed points for the $\gT^3$-action on $M$, we have
		\[
			b_2(M) = b \leq 2 - \chi(N_1 \cap N_2 \cap N_3) \leq 1.
		\]
		It follows that $M$ has the homology groups of $S^6$ or $\CP^3$, and moreover by Lemma \ref{lem:periodic}, $M$ has the cohomology of one of these spaces. 
		Finally, Theorems \ref{thm:CohomologyToHomotopy}, \ref{thm:homotopysphere}, and \ref{thm:homotopyCPn} along with the classification of closed, simply connected $6$-manifolds imply that $M$ is diffeomorphic to $S^6$ or $\CP^3$.		
	\end{proof}
	
	To finish the proof of Theorem \ref{result:k=2} in dimension six, it suffices to consider the case where the torus action does not have a fixed point. 
	We seek to show that $\chi(M^6) = 0$ and, moreover, that $M^6$ is diffeomorphic to $S^3 \times S^3$ if the second Betti number of $M$ vanishes. 
	In the interest of potentially proving $M$ is diffeomorphic to $S^3 \times S^3$ without the assumption that $b_2(M)$ vanishes, we present the following partial progress:
	
	\begin{proposition}\label{prop:dim6nofixedpt}
		Let $M$ be a $6$-dimensional, closed, simply connected Riemannian manifold with $\pr{2}$.
		Suppose $\gT^3$ acts effectively and by isometries on $M$.
		If the $\gT^3$-action has no fixed points, then $\chi(M) = 0$. 
		
		Moreover, the $\gT^3$-action is not free, all non-trivial isotropy groups are isomorphic to $\gS^1$, and the singular $\gT^3$-orbits are isolated and diffeomorphic to $\gT^2$. 
		In particular, the orbit space $M^* = M/\gT^3$ is homeomorphic to $S^3$.
	\end{proposition}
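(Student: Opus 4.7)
The Euler characteristic claim is immediate: since $M^{\gT^3} = \emptyset$, the Betti Number Lemma (Lemma \ref{lem:Conner}) yields $\chi(M) = \chi(M^{\gT^3}) = 0$. The action is not free by the Isotropy Rank Lemma (Lemma \ref{lem:isotropyrank}(\ref{item:SugaharaMouille})) with $r = 3$, $k = 2$, which produces a circle subgroup of $\gT^3$ with a non-empty fixed-point set.

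The main work is to classify the isotropy groups and fixed-point components. I first rule out $\gT^2$-isotropy at a hypothetical point $p$ with isotropy $\gT_p^2$ via a slice analysis: the fixed component $M_p^{\gT_p^2}$ has dimension $1 + s$ with $s$ odd and $s \leq 3$. If $s = 3$, Lemma \ref{lem:torusfpc}(\ref{item:dimG>1}) produces a circle $\gS^1 \subset \gT_p^2$ with $\dim M_p^{\gS^1} = 6$, violating effectiveness. If $s = 1$, the same lemma yields a $4$-dimensional $N := M_p^{\gS^1}$, and the Connectedness Lemma (Theorem \ref{thm:connprinc} with $\delta(\gS^1) = 1$ giving $3$-connectedness) makes $N$ simply connected. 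The induced $\gT^3/\gS^1 = \gT^2$-action on $N$ is effective modulo a finite kernel, since any positive-dimensional kernel would lift to a $\gT^2 \subset \gT^3$ fixing $N$ pointwise, and combining this lifted $\gT^2$ with the original $\gT_p^2$-isotropy at $p$ would produce a $\gT^3$-fixed point. The Orlik--Raymond classification of Section \ref{sec:dim4} then forbids the $2$-torus $M_p^{\gT_p^2}$ as a circle-isotropy fixed component of the simply connected $4$-manifold $N$, a contradiction.

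Once $\gT^2$-isotropy is excluded, the same strategy forces every circle fixed component $F \subseteq M$ to have dimension $2$: dimensions $0$ and $6$ fail by effectiveness and $\gT^3$-fixed-point considerations, and dimension $4$ fails because $F$ is simply connected by the Connectedness Lemma, the induced $\gT^2 = \gT^3/\gS^1_0$-action on $F$ is now automatically effective modulo finite and fixed-point free, so Conner gives $\chi(F) = 0$ while Orlik--Raymond forces $\chi(F) \geq 2$. Such $2$-dimensional components are orientable (Lemma \ref{lem:torusfpc}(1)) and carry a free $\gS^1$-action from $\gT^3/\gS^1_0$, hence are diffeomorphic to $\gT^2$. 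For any hypothetical finite or disconnected isotropy $\gH$ at $q$ with non-trivial $\gamma \in \gH$, the even-codimension requirement of Lemma \ref{lem:torusfpc} combined with the dimension of the orbit through $q$ forces $\dim M_q^{\langle \gamma \rangle} = 4$ (for positive-dimensional $\gH$, Lemma \ref{lem:torusfpc}(\ref{item:Gdisconnected}) instead supplies a finite $\Gamma \subset \gH$ with $\dim M_q^\Gamma = 4$), so the same $\chi = 0$ versus $\chi \geq 2$ contradiction applies on the simply connected $4$-manifold $M_q^{\langle \gamma \rangle}$ (or $M_q^\Gamma$) with its induced fixed-point-free $\gT^3$-action.

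With all non-trivial isotropy groups now isomorphic to $\gS^1$, every singular orbit is diffeomorphic to $\gT^2$, and distinct circle isotropies have disjoint fixed sets (their combined span at an intersection would be a $\gT^2$-isotropy). The finitely many singular orbits are thus isolated in $M^*$. At each $\gS^1$-isotropy point the faithful $4$-dimensional slice representation must have weights $(\pm 1, \pm 1)$, since any larger weight would create a nearby exceptional finite isotropy, which has already been ruled out; hence the local orbit space is $\bC^2/\gS^1 \cong \bR^3$ and $M^*$ is a closed $3$-manifold. It is simply connected by the standard lift-and-connect argument valid whenever a connected group acts on a simply connected space, so the Poincar\'e conjecture gives $M^* \cong S^3$. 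The main obstacle throughout is arranging the case analysis so that it is non-circular: $\gT^2$-isotropy is handled first, then $2$-dimensionality of circle fixed components, and finally finite or disconnected isotropy together with the manifold structure of $M^*$.
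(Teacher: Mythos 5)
Your proof is correct but takes a genuinely different route from the paper's in the way it constrains the isotropy structure, and the comparison is instructive. The paper first excludes \emph{disconnected} isotropy: if $\Gamma \subset \gT^3$ is a non-trivial finite isotropy group, then $\gT^3/\Gamma \cong \gT^3$ acts effectively on the totally geodesic $N = M_p^\Gamma$, which inherits $\pr 2$, so the symmetry rank bound forces $\dim N = 5$; the Connectedness and Periodicity Lemmas then kill $H^1,H^2,H^3$ and yield $\chi(M)>0$, a contradiction. Only afterward does the paper show fixed sets of non-trivial (now connected) isotropy are $2$-dimensional, via Connectedness, Poincar\'e duality, and Conner, and deduces via Lemma \ref{lem:torusfpc}(\ref{item:dimG>1}) that $\gT^2$-isotropy cannot occur. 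You invert this ordering: you kill $\gT^2$-isotropy first by a slice analysis plus Connectedness plus Orlik--Raymond, then show circle fixed components are $2$-dimensional, and only at the end handle finite/disconnected isotropy by the even-codimension observation combined with the Conner/Poincar\'e duality Euler-characteristic clash on a $4$-dimensional fixed component. Both orderings are consistent and both arguments are valid, though a few of your steps can be compressed: the Orlik--Raymond invocation in the $\gT^2$-elimination is heavier than needed, since once $N = M_p^{\gS^1}$ is known to be a simply connected closed $4$-manifold on which $\gT^3/\gS^1$ acts without fixed points, the same clash $\chi(N)=\chi(N^{\gT^2})=0$ versus $\chi(N)=2+b_2(N)\geq 2$ that you deploy two paragraphs later already finishes it. Your approach also avoids the $\pr 2$ symmetry rank bound entirely, which the paper leans on, replacing it with the even-codimension fact; that is a mild but genuine simplification. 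Your closing argument (slice weights $(\pm1,\pm1)$ once finite isotropy is gone, so the local orbit space is $\bR^3$, simple connectivity of $M^*$ from Bredon's lift-and-connect argument, then Poincar\'e) matches the paper's, which phrases the weight condition more tersely as freeness on the normal sphere.
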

	
	\begin{proof}
		The conclusion $\chi(M) = 0$ follows immediately from the equality $\chi(M) = \chi(M^{\gT^3})$ from Lemma \ref{lem:Conner}. 
		First, we claim that all isotropy groups must be connected.
		Otherwise, suppose $\Gamma \subset \gT^3$ is a finite isotropy group at a point $p \in M$.
		Then the torus $\gT^3/\Gamma \cong \gT^3$ acts effectively and by isometries on the totally geodesic fixed-point component $N = M_p^\Gamma$.
		Because the action is effective, $3 \leq \dim (N) \leq 5$.
		Since $N$ is totally geodesic in $M$, $N$ has $\pr{2}$.
		Then by the symmetry rank bound for $\pr{2}$, we have $3 \leq \lfloor \frac{\dim(N)+1}{2} \rfloor$, which implies that $\dim(N) = 5$.
		Then by Theorem \ref{thm:connprinc}, the inclusion $N \inj M$ is $4$-connected.
		Because $M$ is simply connected, so is $N$, and hence $N$ is orientable.
		Thus by Lemma \ref{lem:periodic}, $H^3(M;\bZ) \cong H^2(M;\bZ) \cong H^1(M;\bZ) \cong 0$.
		Hence, it follows from Poincar\'e duality and the Universal Coefficients theorem that $\chi(M) > 0$, which contradicts the hypothesis that the $\gT^3$-action on $M$ has no fixed points.
		Therefore, all isotropy groups must be connected.
		
		Next we claim that the components of the fixed-point set of any non-trivial isotropy group must be $2$-dimensional.
		Otherwise, there exists a connected isotropy group $\gT_p^3$ that fixes a connected submanifold $F$ of dimension $0$ or $4$.
		If $\dim(F) = 0$, then the induced action of $\gT^3$ on $F$ is trivial, and hence the $\gT^3$-action on $M$ has a fixed point, which is again a contradiction.
		If $\dim(F) = 4$, then because $F$ is fixed by $\gT_p^3$, which is isomorphic to $\gS^1$ or $\gT^2$, the inclusion $F \inj M$ is at least $3$-connected.
		Thus, $F$ is simply connected, and by Poincar\'e duality, has $\chi(F) > 0$.
		Hence, $\gT^3$ has a fixed point in $F \subset M$, which again is a contradiction.
		Therefore, the components of the fixed-point set of any non-trivial isotropy group must indeed be $2$-dimensional.
		
		It then follows from Part (\ref*{item:dimG>1}) of Lemma \ref{lem:torusfpc} that each non-trivial isotropy group is isomorphic to $\gS^1$.
		Thus, each singular orbit of the $\gT^3$-action is diffeomorphic to $\gT^2$ and coincides with a component of the fixed-point set of some $\gS^1$ isotropy group.
		Furthermore, given a point $p$ on a singular orbit, because circles are the only possible isotropy groups and components of their fixed-point sets are only $2$-dimensional, $\gT^2 \defeq \gT^3/\gS^1$ must act freely on the normal space to the singular orbit at $p$.
		In particular, the singular orbits are isolated.
		
		Since $M$ is simply connected and all $\gT^3$-orbits are connected, the orbit space $M^* = M/\gT^3$ is a simply connected $3$-manifold (see \cite[Corollary IV.4.7]{Bredon72}). 
		Because the $\gT^3$-action on $M$ only has $\gS^1$ isotropy groups whose fixed-point components are $2$-dimensional and isolated, it follows that $M^*$ has no boundary, and by the resolution to the Poincar\'e conjecture \cite{Perelman02preprint,Perelman03preprint1,Perelman03preprint2}, we have that $M^*$ is homeomorphic to $S^3$.
	\end{proof}

	We remark that Galaz-Garc\'ia and Searle show if $M^n$ is closed, simply connected, and has $\gT^{n-3}$-symmetry, if the orbit space $M^* = M^n/\gT^{n-3}$ is homeomorphic to $S^3$, and if all non-trivial isotropy groups are isomorphic to $\gS^1$, then $\pi_2(M^n) \cong \bZ^{s-n+2}$, where $s$ is the number of isolated singular orbits \cite[Proposition 4.5]{GalazGarciaSearle14}. 
	It then follows from \cite[Lemma 2-6]{GroveWilking14} that if $n = 6$ and $M^6$ has non-negative sectional curvature, then $s \leq 4$, and hence $\pi_2(M^6) \cong 0$ (see \cite[Proposition 4.12]{EscherSearle19}). 
	Escher and Searle then use these observations to prove such a manifold $M^6$ must be diffeomorphic to $S^3 \times S^3$ \cite[Proposition 4.13]{EscherSearle19}.
	Their conclusion relies on the fact that $M^*$ has non-negative curvature in the sense of Alexandrov geometry. 
	Since our condition of $\Ric_2 > 0$ allows for some negative sectional curvatures, we do not know whether it is possible to establish an upper bound on $s$ in our case.
	This leaves us with the following:
	
	\begin{remark}\label{rem:2-connected}
		If $M$ is a manifold as in Proposition \ref{prop:dim6nofixedpt}, then by \cite[Proposition 4.5]{GalazGarciaSearle14}, $\pi_2(M) \cong \bZ^{s-4}$, where $s$ is the number of isolated singular orbits. 
		If one could show that $s = 4$, then it would follow as in the proof of \cite[Proposition 4.13]{EscherSearle19} that $M$ is diffeomorphic to $S^3 \times S^3$.
		Such a result, along with those established here, would imply that the only closed, simply connected 6-manifolds with $\pr{2}$ and $\gT^3$-symmetry are $S^6$, $\CP^3$, and $S^3 \times S^3$.
	\end{remark}
	
	Finally, we include the following observation, as it is used in the proof of Theorem \ref{result:pi1} on the non-simply connected case.	
	
	\begin{corollary}\label{cor:fixedptsS3xS3}
		If $S^3 \times S^3$ is equipped with a metric having $\pr{2}$ that is invariant under an effective $\gT^3$-action, 
		then every non-trivial isotropy group is isomorphic to $\gS^1$, and the fixed-point set of any such $\gS^1$ is connected and diffeomorphic to $\gT^2$.
	\end{corollary}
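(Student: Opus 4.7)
The strategy is to cascade the two preceding propositions and then rule out disconnected $\gS^1$ fixed sets via a Betti number count. First I would verify that the $\gT^3$-action on $M = S^3 \times S^3$ has no fixed point: if it did, Proposition \ref{prop:dim6fixedpt} would force $M$ to be diffeomorphic to $S^6$ or $\CP^3$, contradicting the given diffeomorphism type. Proposition \ref{prop:dim6nofixedpt} then applies and immediately delivers the first assertion---every non-trivial isotropy group is isomorphic to $\gS^1$---together with the supporting fact that each fixed-point component of such an $\gS^1$ is a $2$-dimensional singular $\gT^3$-orbit diffeomorphic to $\gT^2$.

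The substantive remaining claim is that $M^{\gS^1}$ is connected for each circle isotropy $\gS^1 \subset \gT^3$ with non-empty fixed-point set. For any component $F$ of $M^{\gS^1}$, the induced action of $\gT^2 = \gT^3/\gS^1$ on $F$ is fixed-point free, since otherwise a $\gT^3$-fixed point on $M$ would arise, contradicting the first step. Being $2$-dimensional, closed, and admitting a fixed-point-free torus action, $F$ must be diffeomorphic to $\gT^2$ (as $S^2$ admits no such action). Thus every component of $M^{\gS^1}$ is a $2$-torus.

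To bound the number of such components, I would apply Lemma \ref{lem:Conner} (the Betti Number Lemma) to the $\gS^1$-action on $M$. The even and odd Betti sums of $S^3 \times S^3$ both equal $2$, and the even and odd Betti sums of $\gT^2$ also both equal $2$. Hence if $M^{\gS^1}$ had $k$ components, each diffeomorphic to $\gT^2$, Lemma \ref{lem:Conner} would yield $2k \leq 2$, forcing $k \leq 1$. Since $\gS^1$ fixes some point by assumption, $k = 1$, and $M^{\gS^1}$ is a single $\gT^2$, as required. The main obstacle is the identification $F \cong \gT^2$ (ruling out $F \cong S^2$) that unlocks the clean Betti count; once this is in hand, the Betti number inequality immediately closes the argument.
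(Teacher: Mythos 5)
Your proposal is correct and takes essentially the same approach as the paper: invoke Proposition \ref{prop:dim6nofixedpt} to classify the isotropy groups and their fixed-point components, then apply Lemma \ref{lem:Conner} to rule out disconnectedness via a Betti number count (you use the even-degree sum $2k \leq 2$, the paper uses the total sum $4k \leq 4$; either works). Your explicit verification that the $\gT^3$-action has no fixed point (via Proposition \ref{prop:dim6fixedpt}) is a hypothesis the paper leaves implicit, and your second paragraph rederiving $F \cong \gT^2$ is redundant with what Proposition \ref{prop:dim6nofixedpt} already gives, but neither affects correctness.
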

	
	\begin{proof}
		In Proposition \ref*{prop:dim6nofixedpt}, we established that every non-trivial isotropy group is isomorphic to $\gS^1$, and the components of the fixed-point sets of of these $\gS^1$ isotropies must be isolated and diffeomorphic to $\gT^2$.
		Now given an arbitrary $\gS^1$ isotropy group, by Lemma \ref{lem:Conner}, we have 
		\[
			\sum b_i\of{(S^3\times S^3)^{\gS^1}} \leq \sum b_i(S^3 \times S^3) = 4.
		\]
		Therefore, $(S^3\times S^3)^{\gS^1}$ must consist of a single torus $\gT^2$.
	\end{proof}
	
\section{Maximal symmetry rank for $\mathbf{Ric_2 > 0}$ in dimensions $\mathbf{2n \geq 8}$}\label{sec:dim8+}
	
	In this section, we finish the proof of Theorem \ref{result:k=2} by induction. The result in dimension six is used to prove dimension eight, and this result is then used as our base for higher dimensions.
	
	\begin{theorem}\label{thm:dim8+}
		Let $M$ be closed, simply connected Riemannian manifold of even dimension $2n\geq 8$ with $\pr{2}$.
		If $\gT^n$ acts effectively and by isometries on $M$, then $M$ is either diffeomorphic to $S^{2n}$ or homeomorphic to $\CP^n$.
	\end{theorem}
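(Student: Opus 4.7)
My plan is to prove Theorem \ref{thm:dim8+} by induction on $n \geq 4$, with Proposition \ref{prop:dim6fixedpt} serving as the base case input for the reduction. The argument separates into two main components: first, establishing the existence of a $\gT^n$-fixed point on $M^{2n}$, and second, using this fixed point together with the Connectedness and Periodicity Lemmas to complete the classification.

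Given a $\gT^n$-fixed point $p \in M^{2n}$, the isotropy representation factors through $\gU(n) \subset \SO(2n)$ and decomposes $T_pM$ as a sum of complex lines $L_1 \oplus \cdots \oplus L_n$. Taking the circle $\gS^1 \subset \gT^n$ that acts nontrivially on $L_n$ alone, its fixed-point component $N^{2n-2} \defeq M_p^{\gS^1}$ is a totally geodesic codimension-two submanifold containing $p$ on which the residual $\gT^{n-1}$ acts effectively and isometrically. By the Connectedness Lemma (with $\delta = 1$), the inclusion $N \inj M$ is $(2n-3)$-connected, so $N$ is simply connected and inherits $\pr{2}$. When $n = 4$, the point $p$ is a $\gT^3$-fixed point on $N^6$, so Proposition \ref{prop:dim6fixedpt} identifies $N^6$ as $S^6$ or $\CP^3$; for $n \geq 5$, the inductive hypothesis identifies $N^{2n-2}$ as diffeomorphic to $S^{2n-2}$ or homeomorphic to $\CP^{n-1}$. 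The Periodicity Lemma applied with $d = 2$ and $l = 1$ then forces the cohomology of $M$ to be periodic of period two, and the isomorphism $H^2(M;\bZ) \cong H^2(N;\bZ)$ coming from the high connectivity of $N \inj M$ gives $b_2(M) \leq 1$, so $M$ has the integral cohomology of $S^{2n}$ or $\CP^n$. The Cohomology-to-Homotopy Lemma upgrades this to a homotopy equivalence, and Theorems \ref{thm:homotopysphere} and \ref{thm:homotopyCPn}, whose hypotheses match precisely the setup of the circle action $\gS^1$ with simply connected fixed set $N$, upgrade to diffeomorphism or homeomorphism respectively.

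The main obstacle is establishing the existence of the $\gT^n$-fixed point. My plan is to apply Part 2 of the Isotropy Rank Lemma with $k = 2$ to obtain a subtorus $\gT^{n-2} \subseteq \gT^n$ with a fixed point $p \in M$, let $H \supseteq \gT^{n-2}$ denote the isotropy group at $p$, and analyze the totally geodesic fixed-point component $F \defeq M_p^{H^0}$. A representation-theoretic analysis of the faithful complex representation of $H^0$ on $T_pM \cong \bR^{2n}$ bounds $\dim F \leq 4$, and when $\dim F = 0$ the residual torus $\gT^n/H^0$ fixes the isolated point $p$ by connectedness, yielding a $\gT^n$-fixed point on $M$ immediately. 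When $\dim F = 4$, the quotient $\gT^2 \cong \gT^n/\gT^{n-2}$ acts effectively on $F^4$ with induced $\pr{2}$; for $n = 4$ the Connectedness Lemma guarantees that $F^4$ is simply connected, so Theorem \ref{thm:dim4} classifies $F^4$ as $S^4$ or a connected sum of copies of $\CP^2$, all having positive Euler characteristic and thus admitting a $\gT^2$-fixed point that lifts to a $\gT^n$-fixed point on $M$. The remaining subcases---notably $\dim F = 2$, and the configurations for $n \geq 5$ where the Connectedness Lemma provides weaker connectivity information on $F^4$---require a finer analysis of the isotropy lattice, combining further applications of the Isotropy Rank Lemma with the inductive hypothesis on lower-dimensional totally geodesic submanifolds in order to locate a codimension-two circle fixed-point component. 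I expect this case analysis of the isotropy structure to be where the hardest work lies.
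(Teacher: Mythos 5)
Your second half---from the existence of a $\gT^n$-fixed point to the conclusion---matches the paper's in all essentials: the codimension-two fixed-point component $N^{2n-2}$ of a circle through the fixed point, the $(2n-3)$-connected inclusion, the induction with Proposition~\ref{prop:dim6fixedpt} as the base, the Periodicity Lemma, and the rigidity theorems (Theorems~\ref{thm:CohomologyToHomotopy}, \ref{thm:homotopysphere}, \ref{thm:homotopyCPn}). The gap is in your first half, and you identify it yourself.

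You structure the argument as ``first show $\gT^n$ has a fixed point, then use it,'' and propose to get the fixed point by applying Lemma~\ref{lem:isotropyrank} to a $\gT^{n-2}$ and analyzing the fixed-point component $F$ case by case. The paper does not establish the $\gT^n$-fixed point as a prerequisite; it obtains it as a byproduct. It uses Lemma~\ref{lem:isotropyrank} only to conclude that \emph{some} circle has a nonempty fixed-point set, and then chooses, among all circles in $\gT^n$ and all components of their fixed-point sets, a pair $(\gS^1, N)$ with $\dim N$ maximal. Maximality together with Lemma~\ref{lem:torusfpc}(2) forces the residual $\gT^{n-1}$-action on $N$ to be almost effective, so $\dim N \geq n-1 \geq 3$. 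Since $N$ is totally geodesic it has $\pr{2}$, so its symmetry rank is at most $\floor{\tfrac{\dim N + 1}{2}}$, and because $\codim N$ is even and positive, this pins down $\dim N = 2n-2$. The Connectedness Lemma then gives that $N \inj M$ is $(2n-3)$-connected (so $N$ is simply connected), the Periodicity Lemma kills the odd Betti numbers of $M$, and Lemma~\ref{lem:Conner} kills them on $N$, so $\chi(N) > 0$ and $\gT^{n-1}$ has a fixed point on $N$---hence $\gT^n$ has a fixed point on $M$.

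Your route through $F = M_p^{H^0}$ with $\dim F \in \{0,2,4\}$ is genuinely harder, for precisely the reasons you anticipate. When $\dim F = 2$ or $4$ you have no control over whether the residual torus acts effectively on $F$, whether $F$ is simply connected (for $n \geq 5$ the Connectedness Lemma is too weak once $\codim F \geq 4$), or whether $\chi(F) > 0$; using Theorem~\ref{thm:dim4} on $F^4$ requires simple connectivity, which you only get for $n=4$. The maximality argument sidesteps all of this by producing a codimension-two $N$ in one step, which is exactly the regime where the Connectedness Lemma carries the full argument. Replace your fixed-point section with that maximality argument and the rest of your outline goes through as written.
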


\begin{proof}
	{
	We induct over the dimension $2n \geq 8$, and we prove the base case and the induction step simultaneously. 
	
	First, we claim that there exists a simply connected fixed-point component $N$ of a circle subgroup in $\gT^n$ such that $N$ has codimension two and such that the induced torus action on $N$ has a fixed point. All of this follows from Lemma 7.8 in \cite{Mouille22b}, but we include a direct argument here for completeness. 
	}
		Because $n\geq 4$, by Lemma \ref{lem:isotropyrank}, there exist circle subgroups of $\gT^n$ whose fixed-point sets are non-empty.
		Among all the circle subgroups and all components of their fixed-point sets, choose a subgroup $\gS^1$ and a component $N$ of its fixed-point set such that $N$ has maximal dimension.
		By Lemma \ref{lem:torusfpc}, $N$ is invariant under the action of $\gT^{n-1} = \gT^n / \gS^1$, and because $N$ was chosen to be maximal, the $\gT^{n-1}$-action on $N$ must be almost effective.
		Thus $\dim N \geq n-1 \geq 3$.
		{
		On the other hand, because 
		}
		$N$ is totally geodesic, it has $\pr{2}$, and hence the symmetry rank of $N$ is at most $\floor{\frac{\dim N + 1}{2}}$.
		Because $\dim M = 2n$ and $N$ has even codimension in $M$ (Lemma \ref{lem:torusfpc}), it follows that $\dim N = 2n-2$.
		Thus by Theorem \ref{thm:connprinc}, the inclusion $N \inj M$ is $(2n-3)$-connected, and thus $N$ is simply connected. 
		Note moreover that the odd Betti numbers of $M$ vanish by the Periodicity Lemma, and hence the odd Betti numbers of $N$ vanish by Lemma \ref{lem:Conner}.
		In particular, $\chi(N) > 0$ and hence the $\gT^{n-1}$-action on $N$ has a fixed point.
		
		%We now finish the proof.
		%We will now prove Theorem \ref{thm:dim8+} by induction.
		In summary, $N$ is a closed, $(2n-2)$-dimensional, simply connected manifold with $\pr{2}$ and maximal symmetry rank, and the induced $\gT^{n-1}$-action on $N$ has a fixed point.
		For the base case $2n=8$, it follows from Proposition \ref{prop:dim6fixedpt} that $N$ is diffeomorphic to $S^6$ or $\CP^3$.
		In the cases $2n\geq 10$, it follows from the induction hypothesis that $N$ is diffeomorphic to $S^{2n-2}$ or homeomorphic to $\CP^{n-1}$.
		Because $N \inj M$ is $(2n-3)$-connected, $M$ has the cohomology of $S^{2n}$ or $\CP^n$ up to degree $2n-3$, and it follows from Poincar\'e duality that $M$ is a cohomology $S^{2n}$ or $\CP^n$ (see \cite[Lemma 4.8.(1)]{KennardWiemelerWilking22preprint}). 
		Because $M$ is simply connected, $M$ is either a homotopy $S^{2n}$ or $\CP^n$ by Theorem \ref{thm:CohomologyToHomotopy}, and it follows from Theorems \ref{thm:homotopysphere} and \ref{thm:homotopyCPn} that $M$ is either {diffeomorphic } to $S^{2n}$ or {homeomorphic } to $\CP^n$.
%		
%		For the base case, assume $\dim M = 8$.
%		Then $N$ is a closed, $6$-dimensional, simply connected manifold with $\pr{2}$ and maximal symmetry rank. Moreover, the induced torus action on $N$ has a fixed point, so the proof in dimension six implies that $N^6$ is homeomorphic to $S^6$ or $\CP^3$. 
%		The Connectedness Lemma now implies that $M$ has the cohomology of $S^8$ or $\CP^4$ up to degree $5$, and it follows from Poincar\'e duality $M$ is a cohomology $S^8$ or $\CP^4$ (see \cite[Lemma 4.8.(1)]{KennardWiemelerWilking22preprint}). 
%		Because $M$ is simply connected, it then follows that $M$ is either a homotopy $S^8$ or $\CP^4$, and it follows from Theorems \ref{thm:homotopysphere} and \ref{thm:homotopyCPn} that $M$ is either {diffeomorphic } to $S^8$ or {homeomorphic } to $\CP^4$.
%		
%		For dimensions $2n\geq 10$, the result follows {in the same way except that we 
%		apply the induction hypothesis } since $N$ has $\dim N \geq 8$. 
		%must be either diffeomorphic to $S^{2n-2}$ or homeomorphic to $\CP^{n-1}$ by the induction hypothesis
		%{
		%. The claim for $M$ follows since 
		%} the inclusion $N \inj M$ is $(2n-3)$-connected.
	\end{proof}

\section{Maximal symmetry rank for $\mathbf{Ric_k > 0}$ with $\mathbf{k \geq 3}$}\label{sec:k>2}
	
	In this section, we prove Theorems \ref{result:k>2sphere} and \ref{result:k>2CP}. 
	First, we establish general results for manifolds with $\pr{k}$ for $k\geq 3$ that have maximal symmetry rank.
	The second author shows in \cite{Mouille22b} that any closed, connected, $n$-dimensional Riemannian manifold with $\pr{k}$ for some $k\in\{3,\dots,n-1\}$ has symmetry rank bounded above by $\floor{\frac{n+k}{2}} - 1$.
	Notice if $k=3$, or if $k=4$ and $n$ is odd, then this upper bound is equal to $\floor{\frac{n+1}{2}}$, which is the same bound as for positive sectional curvature or $\pr{2}$.
	Our first lemma applies the Isotropy Rank Lemma to show that manifolds with $\pr{k}$ and maximal symmetry rank often have circle actions with codimension-$2$ fixed-point components. {We will also need an analogous topological statement for rational spheres. These results are contained in the following: }
	
	\begin{lemma}\label{lem:codim2Rick}
		Let $M$ be a closed, $n$-dimensional Riemannian manifold equipped with an effective, isometric action by $\gT^r$. Assume one of the following:
			\begin{enumerate}
			\item[(a)] $M$ has $\pr k$ with $3 \leq k \leq n - 5$ and $r = \floor{\frac{n+k}{2}}-1$.
			\item[(b)] { $M$ is a rational homology sphere with $n \geq 2$ and $r \geq \floor{\frac{n+1}{2}}$.}
			\label{item:b}
			\end{enumerate}
		Then there exists a circle $\gS^1 \subset \gT^r$ 
		whose fixed-point set contains a component of codimension $2$ in $M$. 
		{
		In addition, the lower bound on $r$ in (b) is an equality.
		}
%		Let $M$ be a closed, $n$-dimensional Riemannian manifold with $\pr{k}$, and assume $3 \leq k \leq n-5$. 
%		If a torus $\gT^r$ of rank $r = \lfloor \frac{n+k}{2} \rfloor - 1$ acts effectively and by isometries on $M$, then there exists a circle $\gS^1 \subset \gT^r$ whose fixed-point set contains a component of codimension $2$ in $M$.
	\end{lemma}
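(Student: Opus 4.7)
The strategy is to choose a circle $\gS^1 \subset \gT^r$ together with a fixed-point component $N \subseteq M^{\gS^1}$ of maximal dimension, and then apply the appropriate symmetry rank bound to $N$ in order to force $\dim N = n - 2$.

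The first step is to produce such a pair. In case (a), the assumption $n \geq k + 5$ gives $r = \floor{\frac{n+k}{2}} - 1 \geq k$, so Part (\ref*{item:SugaharaMouille}) of Lemma \ref{lem:isotropyrank} yields a subtorus $\gT^{r-k}$ with a fixed point, and any circle in this subtorus fixes the same point. In case (b), Lemma \ref{lem:isotropyrank_spheres} supplies a $\gT^r$- or $\gT^{r-1}$-fixed point directly, so a circle with a fixed point again exists. Once a maximal-dimensional $N$ is selected, Lemma \ref{lem:torusfpc} ensures that $N$ is totally geodesic, has even codimension, and is preserved by $\gT^{r-1} = \gT^r/\gS^1$; moreover, this induced action is almost effective since any positive-dimensional kernel would produce a larger circle fixed-point component and violate maximality. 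In particular, $r - 1$ is at most the symmetry rank of $N$.

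The heart of the argument is the numerical comparison. In case (a), $N$ inherits $\pr k$ as a totally geodesic submanifold, so \cite[Proposition 1.6]{Mouille22b} gives $r - 1 \leq \floor{\frac{\dim N + k}{2}} - 1$ whenever $\dim N \geq k + 1$. Substituting $r = \floor{\frac{n+k}{2}} - 1$ and comparing the floors across the possible parities of $n+k$ and $\dim N + k$ forces $\dim N \geq n - 2$. To verify the side condition $\dim N \geq k + 1$, one uses the trivial bound $r - 1 \leq \dim N$: if instead $\dim N \leq k$, then $r \leq k + 1$, which combined with $n \geq k + 5$ and the fact that $n - \dim N$ is even (in particular ruling out $\dim N = k$ when $n$ and $k$ have opposite parity) leads to a contradiction. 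Since $\dim N < n$ with even codimension, the conclusion is $\dim N = n - 2$. In case (b), Smith theory ensures $N$ is also a rational homology sphere, and applying Lemma \ref{lem:isotropyrank_spheres} to $N$ together with the embedding of a fixed-point isotropy group into $\gO(\dim N)$ yields the symmetry rank bound $\symrank(N) \leq \floor{\frac{\dim N + 1}{2}}$. Hence $r - 1 \leq \floor{\frac{\dim N + 1}{2}}$, and the analogous parity-tracking argument forces $\dim N = n - 2$. The equality statement for $r$ in (b) follows from the same rank bound applied to $M$ itself, which gives $r \leq \floor{\frac{n+1}{2}}$.

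The main difficulty I anticipate is the parity bookkeeping across floor functions, in particular justifying that $\dim N \geq k + 1$ in case (a) at the boundary case $n = k + 5$, where the numerical inequality alone is not sharp enough and must be combined with the parity of the codimension. A secondary point is recording the symmetry rank bound for rational homology spheres in case (b); this is precisely why Lemma \ref{lem:isotropyrank_spheres} was inserted in the preliminaries, since it supplies the bound in a single step via the isotropy representation at a fixed point.
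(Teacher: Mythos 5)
Your proof is correct, and case (a) is essentially the paper's argument: maximize the dimension of a circle fixed-point component $N$, use almost-effectivity of the induced $\gT^{r-1}$-action to get $\dim N\geq r-1$, rule out $\dim N\leq k$ at the boundary $n=k+5$ via the parity of the codimension, and then apply the $\Ric_k$ symmetry rank bound to $N$ together with the same parity bookkeeping to force $\dim N=n-2$. Where you genuinely depart from the paper is in case (b). You again maximize a circle fixed-point component $N$, observe via Smith theory that $N$ is a rational homology sphere, and then derive an auxiliary symmetry rank bound $\symrank(N)\leq\floor{(\dim N+1)/2}$ by running Lemma \ref{lem:isotropyrank_spheres} on $N$ together with the (almost-)faithful normal isotropy representation at a $\gT^{r-1-\delta'}$-fixed point; the parity argument then gives $\dim N=n-2$, and applying the same rank bound to $M$ itself yields the equality $r=\floor{(n+1)/2}$. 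The paper instead works directly with a fixed-point component $F$ of the large subtorus $\gT^{r-\delta}$: the normal isotropy representation at a point of $F$ is faithful, so $r-\delta\leq\tfrac12\codim F\leq\tfrac{n-\delta}{2}$, and the lower bound on $r$ forces equality throughout (giving $r=\floor{(n+1)/2}$ and identifying the representation as $\diag(z_1,\dots,z_{r-\delta})$), after which the desired circle is produced explicitly by intersecting kernels of all but one irreducible summand. Your approach has the virtue of mirroring case (a) and making the two halves of the proof structurally parallel; the paper's is more constructive, produces the circle and the rank equality in one stroke, and avoids the detour of packaging a symmetry rank bound for rational homology spheres. Both are valid; the only thing I would tighten in your write-up is the phrase "embedding of a fixed-point isotropy group into $\gO(\dim N)$," which should be stated as an almost-faithful isotropy representation since the action on $N$ is only almost effective.
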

	
	\begin{proof}
		{
		First we prove the lemma under the assumption of (a). 
		} 
		Because $n \geq k+5$, it follows from the equation $r = \lfloor \frac{n+k}{2} \rfloor - 1$ that $r \geq k+1$.
		Thus by Lemma \ref{lem:isotropyrank}, there exists a circle subgroup of $\gT^r$ with non-empty fixed-point set in $M$.
		Now among all the components of fixed-point sets for all circle subgroups of $\gT^r$, choose a component $F^f$ whose dimension $f$ is maximal, and let $\gS^1$ denote a circle that fixes $F$.
		Because the $\gT^r$ action on $M$ is effective, and since the codimension of $F$ is even (Lemma \ref{lem:torusfpc}), we must have $f \leq n-2$.
		By Lemma \ref{lem:torusfpc}, since the dimension of $F$ is maximal, the action of $\gT^{r-1} \defeq \gT^r / \gS^1$ on $F$ must be almost effective.
		Thus, $f \geq r-1 = \lfloor \frac{n+k}{2} \rfloor - 2$.
		In particular, since $n \geq k+5$, we have $f \geq \lfloor \frac{n+k}{2} \rfloor - 2 \geq \lfloor \frac{2k+5}{2} \rfloor - 2 =  k$.
		Now if $f = k$, then the constraints $\lfloor \frac{n+k}{2} \rfloor = \lfloor \frac{2k+5}{2} \rfloor$ and $n \geq k+5$ imply that $n = k+5 = f+5$, which contradicts the fact that $F$ has even codimension in $M$.
		Hence, we must have $f \geq k+1$, and because $F$ is totally geodesic in $M$, $F$ has $\pr{k}$.
		Then by the
		{
		Maximal Symmetry Rank bound applied to $F$, we have $r-1 \leq \lfloor \frac{f+k}{2} \rfloor -1$.
		Combining this with the assumption $r = \floor{\frac{n+k}{2}}-1$ implies $n+k \leq f+k+3$.
		}
		Therefore, because $n \equiv f \bmod 2$ and $f \leq n-2$, it follows that $f = n-2$.
		
	{
	Now we prove the lemma under the assumption of (b).
	By Lemma \ref{lem:isotropyrank_spheres}, 
	there exists a subgroup $\gT^{r-\delta}$ with non-empty fixed-point set,
	where $\delta$ is zero if $n$ is even and one if $n$ is odd.
	Let $F$ be a fixed-point component of $\gT^{r-\delta}$.
	Since the action is effective, the isotropy representation at a normal space $\nu_p F$ to $F$ is faithful.
	Hence $r - \delta \leq \tfrac 1 2 \codim F$.
	In addition, $\tfrac 1 2 \codim F \leq \tfrac{n-\delta}{2}$ by Lemma \ref{lem:torusfpc}.
	By the assumption $r \geq \floor{\frac{n+1}{2}}$, it follows that $r - \delta = \tfrac 1 2 \codim F= \tfrac{n-\delta}{2}$.
	In particular, $r = \floor{\frac{n+1}{2}}$.
	In addition, there are exactly $r - \delta$ irreducible subrepresentations, and 
	the representation is equivalent to the map sending $(z_1,\ldots,z_{r-\delta}) \in \gT^{r-\delta}$ 
	to $\diag(z_1,\ldots,z_{r-\delta}) \in \gU(r-\delta) \subseteq \SO(\nu_p F)$.
	As in the proof of Proposition \ref{prop:dim6fixedpt}, 
	we can intersect the kernels of any $r-\delta-1$ of the irreducible subrepresentations
	to obtain the desired circle.
	}
	\end{proof}
	
	Next, we establish a rigidity result for highly connected manifolds with $\pr{k}$ that have a codimension-two circle fixed-point component.
	
	\begin{proposition}\label{prop:k-1connected}
		Let $M$ be a closed, $n$-dimensional Riemannian manifold with $\pr{k}$. 
		Assume that {$3 \leq k \leq n - 3$ 
		and moreover that $k$ is odd if $k = \tfrac{n}{2}$. %$n = 6$ if $k = \tfrac{n}{2}$. 
		}
		If $M$ is $(k-1)$-connected,
		and if $\gS^1$ acts effectively and by isometries on $M$ such that 
		its fixed-point set contains a component $N$ of codimension $2$ in $M$, 
		then $M$ is diffeomorphic to $S^n$. 
	\end{proposition}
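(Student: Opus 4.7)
The plan is to apply the Connectedness Lemma and the Periodicity Lemma to show $M$ is an integer cohomology $n$-sphere, invoke Theorem \ref{thm:CohomologyToHomotopy} to upgrade to a homotopy equivalence $M \simeq S^n$, and then apply Theorem \ref{thm:homotopysphere} to promote this to a diffeomorphism.

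First, because $\gS^1$ acts effectively on $M$ while fixing $N$ pointwise, the enhanced form of Part (1) of Theorem \ref{thm:connprinc} (with $\delta(\gS^1) = 1$) shows that the inclusion $N^{n-2} \inj M^n$ is $(n-k-1)$-connected. Since $M$ is $(k-1)$-connected with $k \geq 3$, we have $H^2(M;\bZ) = 0$, and hence the Poincar\'e dual $e \in H^2(M;\bZ)$ of $[N]$ vanishes. I would then establish $H^i(M;\bZ) = 0$ for $1 \leq i \leq n-1$ by case analysis on $n$ vs.\ $2k$. When $n \leq 2k - 1$, Poincar\'e duality (valid since $M$ is simply connected, hence orientable) combined with the $(k-1)$-connectedness already produces vanishing in all of $[1, n-1]$, because the vanishing ranges $[1, k-1]$ and $[n-k+1, n-1]$ together cover $[1, n-1]$. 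When $n \geq 2k+1$, the Periodicity Lemma (Lemma \ref{lem:periodic}) with $d = 2$, $l = k-1$, and $e = 0$ forces $H^i(M;\bZ) = 0$ for $k \leq i \leq n-k$ (surjectivity and injectivity of the zero map require the target resp.\ source to vanish), which together with Poincar\'e duality and $(k-1)$-connectedness covers all of $[1, n-1]$.

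Once $M$ is a simply connected integer cohomology sphere, Theorem \ref{thm:CohomologyToHomotopy} makes it a homotopy sphere. The assumption $k \leq n-3$ ensures $n-k-1 \geq 2$, so the $(n-k-1)$-connected inclusion $N \inj M$ forces $N$ to be simply connected. Smith theory applied to the $\gS^1$-action on the integer homology sphere $M$ ensures that $M^{\gS^1}$ is itself a $\bZ$-homology sphere and hence connected, so $N$ is the entire fixed-point set. Theorem \ref{thm:homotopysphere} then yields $M$ diffeomorphic to $S^n$.

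The main obstacle is the borderline case $n = 2k$, which the hypothesis permits only when $k$ is odd and in which the Periodicity Lemma's range is empty, leaving $H^k(M;\bZ)$ potentially nonzero; Poincar\'e duality together with $H_{k-1}(M;\bZ) = 0$ shows this group is torsion-free, so it suffices to prove $b_k(M) = 0$. I would address this via an Euler characteristic and Betti number argument built on Lemma \ref{lem:Conner}, which gives $\chi(M) = \chi(M^{\gS^1})$ and $\sum b_i(M^{\gS^1}) \leq 2 + b_k(M)$; using $\chi(M) = 2 - b_k(M)$ (valid because $k$ is odd and $M$ is $(k-1)$-connected of dimension $2k$), the computation $\chi(N) = 2 + b_{k-1}(N)$ from the $(k-2)$-connectedness and Poincar\'e duality on $N$, and Part (2) of Theorem \ref{thm:connprinc} to ensure any other fixed component has codimension at least $k+1$, the arithmetic should force $b_{k-1}(N) = 0$ and $M^{\gS^1} = N$, yielding $b_k(M) = 0$ and completing the proof.
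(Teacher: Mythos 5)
Your proposal is correct and follows essentially the same strategy as the paper: it splits into the cases $n \leq 2k-1$, $n \geq 2k+1$, and $n = 2k$, using the Connectedness Lemma plus the Periodicity Lemma (with $e = 0$) for the generic cases, a Betti number and Euler characteristic argument via Lemma~\ref{lem:Conner} for the borderline case, and Theorems~\ref{thm:CohomologyToHomotopy} and~\ref{thm:homotopysphere} to conclude. One small point: in the $n = 2k$ case, ruling out extra fixed-point components requires the \emph{even} Betti number estimate $\sum b_{2i}(M^{\gS^1}) \leq \sum b_{2i}(M) = 2$ from Lemma~\ref{lem:Conner} (so that any additional component, which contributes at least one to the even Betti sum, is impossible, and simultaneously $b_{k-1}(N) = 0$); the total Betti bound $\sum b_i(M^{\gS^1}) \leq 2 + b_k(M)$ you wrote does not alone rule out an extra component with nonpositive Euler characteristic, and the codimension estimate from Part (2) of the Connectedness Lemma, while true, is not needed. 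Since you explicitly invoke Lemma~\ref{lem:Conner}, which supplies the parity-refined bound, this is a matter of spelling out the arithmetic rather than a gap.
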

	
	\begin{proof}
		{
		First assume $3 \leq k \leq \tfrac{n-1}{2}$. 
		}
		By the Connectedness Lemma, the inclusion $N \inj M$ is $(n-k-1)$-connected. 
		In particular, $N$ is simply connected since $n - k - 1 \geq 2$.
		We claim that $M^n$ is a cohomology sphere. 
		Given the claim, Theorems \ref{thm:CohomologyToHomotopy} and \ref{thm:homotopysphere} imply that $M$ is diffeomorphic to $S^n$.
		To prove the claim, we apply the Periodicity Lemma to the inclusion $N \inj M$, which has codimension two and is $(n - k - 1)$-connected. 
		We then have $e \in H^2(M)$ that induces periodicity from degree $k - 1$ to degree $n - (k - 1)$. That is, the map $H^i(M) \to H^{i+2}(M)$ induced by multiplication by $e$ is surjective for $k-1 \leq i < n - (k-1) - 2$ and injective for $k - 1 < i \leq n - (k-1) - 2$. 
		Because $M$ is $(k-1)$-connected and $k \geq 3$, we have $e = 0$, so combining with the injectivity property implies that $H^i(M) = 0$ for all $0 < i \leq n - (k-1) - 2$. 
		Since $n - (k-1) - 2 \geq \tfrac{n-1}{2}$, Poincar\'e duality implies that $M$ is cohomology sphere, as claimed.
		
		{
		Second assume $\tfrac{n+1}{2} \leq k \leq n - 3$. 
		The condition that $M$ is $(k-1)$-connected implies that 
		$M$ is a homology sphere by Poincar\'e duality.
		Since $M$ is simply connected, we see as in the previous case that 
		$N$ is simply connected and hence that $M$ is diffeomorphic to $S^n$ by 
		Theorems \ref{thm:CohomologyToHomotopy} and \ref{thm:homotopysphere}.
		}
		
		{
		%Finally assume $k = \tfrac n 2$ and $n = 6$.
		Finally assume $k = \tfrac n 2$ and that $k$ is odd.
		%By the Connectedness Lemma, $N\inj M$ is $2$-connected, 
		By the Connectedness Lemma, $N\inj M$ is $(k-1)$-connected, 
		%and hence $N$ is simply connected.
		and hence $N$ is $(k-2)$-connected.
		%Thus by Poincar\'e duality, $b_3(N) = b_1(N) = 0$, and hence $\chi(N) = 2 + b_2(N)$. 
		Thus by Poincar\'e duality, $\chi(N) = 2 + b_{k-1}(N)$.
		%Because $M$ is $2$-connected, 
		Because $M$ is $(k-1)$-connected, 
		it follows from the estimate on the sum of even Betti numbers (Lemma \ref{lem:Conner})
		%that $N$ is the only connected component of $M^{\gS^1}$ and $b_2(N) = 0$.
		that $N$ is the only connected component of $M^{\gS^1}$ and $b_{k-1}(N) = 0$.
		%Thus, $2 = \chi(N) = \chi(M^{\gS^1}) = \chi(M) = 2 - b_3(M)$, 
		Thus, $2 = \chi(N) = \chi(M^{\gS^1}) = \chi(M) = 2 - b_k(M)$, 
		%and hence $b_3(M) = 0$.
		and hence $b_k(M) = 0$.
		Therefore, $M$ and $N$ are both simply connected cohomology spheres, 
		and by Theorems \ref{thm:CohomologyToHomotopy} and \ref{thm:homotopysphere},
		%$M$ is diffeomorphic to $S^6$. 
		$M$ is diffeomorphic to $S^n$. 
		}
		\end{proof}
	
	Now we prove our second main result:
	
	\begin{theorem}[Theorem \ref{result:k>2sphere}]
	Let $M^n$ be a $(k-1)$-connected, closed Riemannian manifold with $\Ric_k > 0$ for some $k \geq 3$. If $n \neq 7$ and if $M^n$ admits an effective, isometric $\gT^r$-action with $r = \floor{\frac{n+k}{2}} - 1$, then one of the following occurs:
		\begin{enumerate}
		\item $M$ is diffeomorphic to $S^n$ and $k \leq 4$, with equality only if $n$ is odd.
		\item $M$ is diffeomorphic to $S^3 \times S^3$ and $k = 3$.
		\end{enumerate}
	\end{theorem}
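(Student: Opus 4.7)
The plan is to combine Lemma \ref{lem:codim2Rick} with Proposition \ref{prop:k-1connected} and Theorem \ref{thm:homotopysphere} for the bulk of cases, and to handle the exceptional pair $(k,n)=(3,6)$ and the middle-degree configurations $k=n/2$ with $k$ even separately.

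For the generic range $n\geq k+5$, and when $(k,n)$ does not fall in the excluded middle-degree case, Lemma \ref{lem:codim2Rick}(a) supplies a circle $\gS^1\subseteq\gT^r$ whose fixed-point set contains a component $N$ of codimension two in $M$. Proposition \ref{prop:k-1connected}, which applies for $3\leq k\leq n-3$ and requires $k$ odd only when $k=n/2$, then yields $M\cong S^n$. For smaller $n$ with $(k,n)\notin\{(3,6),(4,8)\}$, the $(k-1)$-connectedness of $M$ together with Poincar\'e duality forces $M$ to be an integral cohomology sphere, so I would apply Lemma \ref{lem:codim2Rick}(b) (after verifying $r\geq\lfloor (n+1)/2\rfloor$ from the formula for $r$) to produce the codimension-two circle fixed-point component $N$, observe via the Connectedness Lemma that $N$ is simply connected, and invoke Theorem \ref{thm:homotopysphere} to upgrade the homotopy equivalence to $M\cong S^n$.

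Once $M\cong S^n$ has been obtained, a short calculation comparing $r=\lfloor (n+k)/2\rfloor-1$ to the maximal torus rank $\lfloor (n+1)/2\rfloor$ of $S^n$ shows that conclusion (1) is forced: $k\leq 4$, with equality only when $n$ is odd. In all parameter combinations where this calculation fails --- that is, all $k\geq 5$ as well as $k=4$ with $n$ even --- the hypothesis is vacuous since $S^n$ cannot support the required torus action. For the middle-degree configurations $k=n/2$ with $k$ even, starting with $(k,n)=(4,8)$, Proposition \ref{prop:k-1connected} does not apply. Here I would argue directly: the $(k-1)$-connectedness gives $\chi(M)=2+b_k(M)\geq 2$, so by Lemma \ref{lem:Conner} the full torus $\gT^r$ has a fixed point, and its isotropy representation at that point is a faithful orthogonal representation of $\gT^r$ on $T_pM\cong\bR^n$, which is impossible because $r$ exceeds the maximal torus rank $\lfloor n/2\rfloor$ of $\SO(n)$ in every such case.

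It remains to handle $(k,n)=(3,6)$, where $M$ is only $2$-connected and hence may have $b_3>0$. I plan to split on whether the $\gT^3$-action has a fixed point. If yes, Proposition \ref{prop:dim6fixedpt} gives $M\cong S^6$ or $\CP^3$, and $\CP^3$ is excluded because it is not $2$-connected. If no, Proposition \ref{prop:dim6nofixedpt} gives $\chi(M)=0$; combined with $2$-connectedness, this forces $\pi_2(M)=0$ and hence $s=4$, so Remark \ref{rem:2-connected} yields $M\cong S^3\times S^3$, producing conclusion (2). The main obstacle of the proof is the middle-degree case $k=n/2$ with $k$ even, because the periodicity-based machinery underlying Proposition \ref{prop:k-1connected} does not control the middle Betti number $b_k(M)$ there; the separate torus-in-$\SO(n)$ obstruction is what allows one to eliminate these configurations cleanly.
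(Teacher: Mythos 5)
Your overall architecture mirrors the paper's proof: split into the range where Lemma \ref{lem:codim2Rick}(a) applies, a low-dimensional range, the middle-degree configurations $k = n/2$, and the exceptional case $(k,n) = (3,6)$. Your treatment of the generic range and of $k = n/2$ with $k$ even via the faithful-isotropy-representation argument at a $\gT^r$-fixed point is sound and matches the paper. However, there are two genuine gaps.

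First, in the low-dimensional range $n \leq k+4$, you claim that after producing a codimension-two component $N$ via Lemma \ref{lem:codim2Rick}(b) you can ``observe via the Connectedness Lemma that $N$ is simply connected.'' The Connectedness Lemma gives the inclusion $N \hookrightarrow M$ is $(n-k-1)$-connected, and for the non-vacuous parameter pairs $(k,n) \in \{(3,4),(3,5),(4,5)\}$ this degree of connectivity is $0$ or $1$; neither forces $\pi_1(N) = 0$ from $\pi_1(M) = 0$ (one needs $2$-connectivity). So Theorem \ref{thm:homotopysphere} cannot be invoked directly. The paper avoids this: it uses the equality $r = \lfloor(n+1)/2\rfloor$ from Lemma \ref{lem:codim2Rick}(b) together with $k \geq n-2$ to force $n \leq 6$, and then appeals to the classification of simply connected homology spheres in those low dimensions rather than to the Montgomery--Yang theorem.

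Second, and more seriously, your treatment of $(k,n) = (3,6)$ invokes Propositions \ref{prop:dim6fixedpt} and \ref{prop:dim6nofixedpt}, both of which carry the hypothesis $\mathrm{Ric}_2 > 0$. Here you only have $\mathrm{Ric}_3 > 0$, which is strictly weaker, so neither proposition applies as stated. (The conclusion $\chi(M) = 0$ in the fixed-point-free case survives because it is really Lemma \ref{lem:Conner}, but the orbit-structure analysis underlying Remark \ref{rem:2-connected} uses $\mathrm{Ric}_2 > 0$ essentially.) The paper instead handles $(3,6)$ directly: in the fixed-point case, $2$-connectivity plus the faithful isotropy representation produce a codimension-two circle fixed-point component and Proposition \ref{prop:k-1connected} gives $S^6$; in the fixed-point-free case, it computes $\chi(M) = 2 - b_3(M) = 0$, so $b_3 = 2$, and then cites a purely topological classification (Corollary 2.6 of Escher--Searle) rather than the $\mathrm{Ric}_2$-specific orbit analysis.

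Your separate torus-in-$\SO(n)$ obstruction for $k = n/2$ with $k$ even is a nice clean way to dispose of those configurations, essentially the same as the paper's, but the two gaps above need to be repaired before the argument is complete.
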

	
	\begin{proof}
		{
		The claim on $k$ in (1) follows as soon as we know that $M$ is a rational sphere by combining the assumption $r = \floor{\frac{n+k}{2}} - 1$ with the upper bound $r \leq \floor{\frac{n+1}{2}}$ from Lemma \ref{lem:codim2Rick}.
		}
		The claim on $k$ in (2) follows from the assumption that $M$ is $(k-1)$-connected.
		It suffices to prove the diffeomorphism claims.
		
	First, assume $k < \tfrac{n}{2}$. 
	Since $n \geq 2k + 1 \geq 7$ and $n \neq 7$, we have $k \leq n - 5$. 
	Hence Lemma \ref{lem:codim2Rick} implies the existence of a circle $\gS^1$ 
	with fixed-point set of codimension two, and Proposition \ref{prop:k-1connected} implies 
	$M$ is diffeomorphic to $S^n$. 
		
	Second, assume $k > \frac n 2$. 
	By the assumption that $M$ is $(k-1)$-connected and Poincar\'e duality, 
	we see that $M$ is a homology sphere.
	%Notice in particular that $k \leq n - 3$ since otherwise the upper bound $r \leq \floor{\tfrac{n+1}{2}}$ from Lemma \ref{lem:codim2Rick} is contradicted.
	{
	Since $k \geq 3$, the assumption on $r$ implies $r \geq \floor{\frac{n+1}{2}}$.		
	By Lemma \ref{lem:codim2Rick}, there exists a subgroup $\gS^1$ having
	a fixed-point component $N$ with codimension two.
	If $k \leq n - 3$, then Proposition \ref{prop:k-1connected} implies again that $M$ is diffeomorphic to $S^n$. If $k \geq n - 2$, then we apply Lemma \ref{lem:codim2Rick} once more to conclude that $r = \floor{\tfrac{n+1}{2}}$ and hence that $n \leq 6$. In this range, simply connected homology spheres are diffeomorphic to standard spheres, so again $M$ is diffeomorphic to $S^n$.
	}
	%By Lemma \ref{lem:k-1connected}, $M$ is diffeomorphic to $S^n$.
	%It follows for topological reasons that $\gT^r$ has a fixed point in even dimensions 
	%and that a codimension one subtorus $\gT^{r-1}$ has a fixed point in odd dimensions.
	%Indeed, this follows immediately in even dimensions since $\chi(M) = 2$, 
	%and it follows by a straightforward induction argument in odd dimensions 
	%using the classical fact proved by Smith that $\gT^2$ cannot act freely 
	%on a homology sphere; see \cite[Chapter III Theorem 8.1]{Bredon72}.
	%In particular, $r \leq \tfrac{n+1}{2}$, and 
	%there exists a fixed-point component $N$ of a circle in $\gT^r$ with codimension two. 
	%Smith's theorem implies that $N$ is a homology sphere, 
	%and the 
	%The Connectedness Lemma implies that $N$ is simply connected. 
	%Therefore $M$ is diffeomorphic to $S^n$ by 
	%Theorems \ref{thm:CohomologyToHomotopy} and \ref{thm:homotopysphere}.
		
		{
		Third, assume that $k = \tfrac n 2$ and that the fixed-point set of $\gT^r$ is non-empty.
		}
		For any fixed-point component $F^f$ and any $p \in F^f$, 
		the isotropy representation $\gT^r \to \SO(\nu_p F^f)$
		on the normal space to $F^f$ is faithful since the action on $M$ is effective. 
		{
		Hence $r \leq \tfrac{n-f}{2}$. 
		Since $f \geq 0$, the lower bound on $r$ implies $k = 3$ and $r = 3$. 
		}
		As in the proof of Proposition \ref{prop:dim6fixedpt}, 
		there exists a circle $\gS^1$ with a fixed-point component $N^4$ of codimension two. 
		{
		By Proposition \ref{prop:k-1connected},
		$M$ is diffeomorphic to $S^6$. 
		}
		
		{
		Finally, assume that $k = \tfrac n 2$ and that fixed-point set of $\gT^r$ is empty.
		Hence $\chi(M) = \chi(M^{\gT^r}) = 0$.
		}
		On the other hand, $\chi(M) = 2 + (-1)^k b_k(M)$ since $M^{2k}$ is $(k-1)$-connected.
		So it follows that $k$ is odd and $b_k(M) = 2$. 
		{
		If $k \geq 5$, then 
		}
		Lemma \ref{lem:codim2Rick} applies since $n = 2k \geq k + 5$, and we get 
		a circle $\gS^1$ whose fixed-point set has a component $N^{2k-2}$ of codimension two.
		{
		But then Proposition \ref{prop:k-1connected} implies that $M$ is a sphere,
		which contradicts the fact that $b_k(M) = 2$, so we must have $k = 3$.
		}
		Corollary 2.6 in \cite{EscherSearle19} now implies that $M$ is diffeomorphic to $S^3 \times S^3$.				
	\end{proof}
		
	Next we apply Lemma \ref{lem:codim2Rick} to prove our third main result.
	Recall that $M$ being an integral cohomology $\CP$ up to degree $k + 2$ means that $H^1(M;\bZ) \cong 0$, $H^2(M;\bZ)\cong \bZ$, and the map $H^i(M;\bZ) \to H^{i+2}(M;\bZ)$ induced by multiplication by a generator $x\in H^2(M;\bZ)$ is surjective for $0 \leq i < k$ and injective for $0 < i \leq k$.
	
	\begin{theorem}[Theorem \ref{result:k>2CP}]\label{thm:k>2CP}
		Fix $k \geq 3$, and let $M^n$ be a simply connected, closed Riemannian manifold.
		Assume further that $M^n$ is an integral cohomology $\CP$ up to degree $k + 2$. 
		If $M$ has $\Ric_k > 0$ and admits an effective, isometric $\gT^r$-action with $r = \floor{\frac{n+k}{2}} - 1$, then $n$ is even, $M^n$ is homeomorphic to $\CP^{\frac n 2}$, and $k = 3$.
	\end{theorem}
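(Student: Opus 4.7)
The plan is to mirror the proof of Theorem \ref{result:k>2sphere}, adapted to the complex projective model. The strategy is to use Lemma \ref{lem:codim2Rick} to produce a circle $\gS^1 \subset \gT^r$ with a codimension-two fixed-point component $N^{n-2}$, and then to combine the Connectedness and Periodicity Lemmas with the integer cohomology $\CP$ hypothesis to pin down the full integer cohomology of $M$ as that of $\CP^{n/2}$, after which Theorems \ref{thm:CohomologyToHomotopy} and \ref{thm:homotopyCPn} finish the argument.

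In detail, I would first apply Lemma \ref{lem:codim2Rick}(a) (which requires $3 \leq k \leq n - 5$) to obtain a circle $\gS^1 \subset \gT^r$ whose fixed-point set in $M$ contains a component $N^{n-2}$ of codimension two. By the Connectedness Lemma, the inclusion $N \inj M$ is $(n - k - 1)$-connected. The Periodicity Lemma with $d = 2$ and $l = k - 1$ then produces a class $e \in H^2(M;\bZ)$ such that $\cup e \colon H^i(M;\bZ) \to H^{i+2}(M;\bZ)$ is surjective for $k - 1 \leq i < n - k - 1$ and injective for $k - 1 < i \leq n - k - 1$. Writing $e = c\, x$ for the generator $x$ of $H^2(M;\bZ) \cong \bZ$, I would pin down $c = \pm 1$ by invoking Periodicity surjectivity on a pair of $\bZ$-groups identified by the $\CP$-hypothesis (at $i = k$ when $k$ is even, at $i = k - 1$ when $k$ is odd).

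Combining the $\CP$-hypothesis with periodicity by $\cup x$, the cup product with $x$ becomes an isomorphism $H^i(M;\bZ) \to H^{i+2}(M;\bZ)$ for $0 < i < n - k - 1$. This propagates the vanishing of odd-degree cohomology and the $\bZ$-structure on even-degree cohomology from the $\CP$-range throughout $[0, n - k - 1]$. Combined with Poincar\'e duality on the simply connected, hence orientable, manifold $M$, consistency between the lower and upper halves of the cohomology forces $n$ to be even and forces $H^*(M;\bZ) \cong H^*(\CP^{n/2};\bZ)$. Theorem \ref{thm:CohomologyToHomotopy}(2) then promotes this to a homotopy equivalence $M \simeq \CP^{n/2}$. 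To upgrade to a homeomorphism via Theorem \ref{thm:homotopyCPn}, I use that $N \inj M$ is $(n - k - 1)$-connected (which is at least $3$ in the applicable range) and that $N^{n-2}$ is a simply connected, totally geodesic submanifold with $\pr k$ that inherits an effective $\gT^{r-1} = \gT^r/\gS^1$-action of the correct rank $\floor{\tfrac{(n-2)+k}{2}} - 1$ along with an inherited $\CP$-cohomology structure; induction on $n$ then shows $N$ is homeomorphic to $\CP^{n/2 - 1}$ and Theorem \ref{thm:homotopyCPn} gives the desired homeomorphism $M \approx \CP^{n/2}$. Finally, the conclusion $k = 3$ follows by comparing the hypothesis $r = \floor{\tfrac{n+k}{2}} - 1$ with the maximal torus rank $n/2$ of $\CP^{n/2}$.

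The main obstacle will be handling the low-dimensional range $n \leq k + 4$, where Lemma \ref{lem:codim2Rick}(a) does not apply directly and the induction base case must be established. In these cases, the $\CP$-hypothesis together with Poincar\'e duality already nearly determines $H^*(M)$, and the symmetry rank bound $r \leq n/2$ on a cohomology $\CP^{n/2}$ gives an immediate contradiction when $k \geq 4$ or when $n$ is odd; the remaining sub-cases reduce to low-dimensional classification arguments in the spirit of Section \ref{sec:dim6}.
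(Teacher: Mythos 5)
Your cohomology computation matches the paper's nearly step for step: apply Lemma \ref{lem:codim2Rick}(a) for a codimension-two fixed-point component $N$, combine the Connectedness and Periodicity Lemmas to get a periodicity class $e \in H^2(M;\bZ)$, and use the $\CP$-hypothesis at degree $i = 2\lfloor k/2\rfloor$ to force $e = \pm x$. The paper's case split uses $n \leq 2k+1$ (rather than your $n \leq k+4$) as the range where Poincar\'e duality alone already finishes the cohomology, which is a somewhat wider and cleaner cut, but the underlying idea is the same.

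The genuine divergence is in the homeomorphism upgrade. You propose to induct on $n$, applying the theorem to $N^{n-2}$; this can be made to work but needs more care than you give it. The $(n-k-1)$-connectedness alone does not transfer the $\CP$-cohomology hypothesis to $N$ up to degree $k+2$ unless $n \geq 2k+4$ (you instead need the full cohomology of $M$, already established at this point, combined with Poincar\'e duality on $N$ or Su's theorem on circle actions on cohomology $\CP$'s, to see that $N$ is a cohomology $\CP^{n/2-1}$). The induced $\gT^{r-1}$-action on $N$ is a priori only almost effective, so you should pass to the effective quotient. And the base cases $n=4$ (Freedman) and $n=6$ (the six-manifold classification, as in Proposition \ref{prop:dim6fixedpt}) need to be spelled out. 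The paper sidesteps all of this: once $M$ is a full cohomology $\CP^{n/2}$, hence $k=3$ and $r = n/2$, it takes a $\gT^r$-fixed point (which exists since $\chi(M) > 0$), reads off the diagonal isotropy representation $\gT^r \to \gU(r)$, and forms a nested chain $F^4 \subset F^6 \subset \cdots \subset M$ of intersections of the codimension-two circle fixed-point sets $N_j$. Su's theorem \cite{Su63} makes each $F^{2i}$ a cohomology $\CP^i$ with the correct restriction of generators, so each inclusion in the chain is $3$-connected; Freedman pins down $F^4 \approx \CP^2$, and Theorems \ref{thm:CohomologyToHomotopy} and \ref{thm:homotopyCPn} then propagate the homeomorphism up the chain without re-invoking the theorem's full hypotheses at any stage. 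Both routes reach the same conclusion, but the paper's chain argument is more explicit and avoids the bookkeeping your induction would require.
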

	
	\begin{proof}
		First, we claim it suffices to prove that $M$ has the cohomology of $\CP^{\frac n 2}$ in all degrees by Theorems \ref{thm:CohomologyToHomotopy} and \ref{thm:homotopyCPn}. 
		Indeed, if $M$ is a cohomology $\CP^{\frac n 2}$, it follows that $\chi(M) > 0$, that $\gT^r$ has a fixed point, and hence that $r \leq \tfrac{n}{2}$. 
		From the equation $r = \floor{\frac{n+k}{2}}-1$, since $n$ is even and $k\geq 3$, we have $k = 3$ and $r = \frac n 2$. 
		As in the proof of Proposition \ref{prop:dim6fixedpt}, the isotropy representation $\gT^r \to \gU(r)$ at a fixed point of the $\gT^r$-action is of the form $(z_1,\dots,z_r) \mapsto \diag(z_1,\dots,z_r)$ for a certain choice of coordinates.
		For each $j\in\{1,\dots,r\}$, let $\gS^1_j \subset \gT^r$ denote the circle subgroup parametrized by $z_j$, and let $N_j$ denote the fixed-point set of $\gS^1_j$, which is $(n-2)$-dimensional.
		Defining $F^{2i} = \bigcap_{j=1}^{r-i} N_j$ for each $i\in\{2,\dots,r-1\}$, we have a chain of inclusions 
		\[
			F^4 \subset F^6 \subset \ldots \subset F^{n-2} \subset M^n.
		\]
		Because each space $F^{2i}$ is {a fixed-point component of } a circle action on the subsequent space in the chain, it follows from \cite{Su63} that each $F^{2i}$ is a cohomology $\CP^{2i}$, and the generator of $H^2(F^{2i};\bZ)$ restricts to a generator of $H^2(F^{2i-2};\bZ)$ for all $i$. 
		In particular, each inclusion induces isomorphisms on cohomology in all degrees less than the dimension of the submanifold. 
		By the universal coefficients theorem and Hurewicz's theorem, it follows that each inclusion is $3$-connected. 
		Now $F^4$ is then homeomorphic to $\CP^2$ by Freedman's classification in dimension four, so we can apply Theorems \ref{thm:CohomologyToHomotopy} and \ref{thm:homotopyCPn} to conclude that $M^n$ is homeomorphic to $\CP^{\frac n 2}$. 
		
		We now proceed to the proof that $M$ has the cohomology of $\CP^{\frac n 2}$ in all degrees.
		Since we already know the integral cohomology is correct in degrees up to $k + 2$, the rest follows by Poincar\'e duality if $k + 2 \geq \tfrac{n+3}{2}$ (for proof of a similar fact in rational cohomology, see \cite[Lemma 4.8.(1)]{KennardWiemelerWilking22preprint}).
		We may therefore assume $k \leq \tfrac{n-2}{2}$.
		In particular, we have $n \geq 2k + 2 \geq 8$, and hence $k \leq \tfrac{n-2}{2} < n - 4$. 
		Lemma \ref{lem:codim2Rick} therefore implies the existence of a circle $\gS^1$ containing a fixed-point component $N^{n-2}$ of codimension two. 
		By Theorem \ref{thm:connprinc}, the inclusion $N \inj M$ is $(n-k-1)$-connected,
		and by Lemma \ref{lem:periodic}, there exists $e\in H^2(M)$ such that the homomorphism $\cup e:H^i(M) \to H^{i+2}(M)$ is surjective for $k-1 \leq i < n-k-1$ and injective for $k-1 < i \leq n-k-1$.
		Because $M$ is an integral cohomology $\CP$ up to degree $k+2 \geq 5$, if $x\in H^2(M) \cong \bZ$ denotes a generator, then $e = \lambda x$ for some $\lambda \in \bZ$.
		We will show that $\lambda = \pm 1$.
		
		Define $l \defeq \lfloor \frac{k}{2} \rfloor$.
		Then taking $i = 2l$ above, we have that $\cup (\lambda x):H^{2l}(M) \to H^{2l+2}(M)$ is an isomorphism if $k$ is even and an epimorphism if $k$ is odd.
		However, because $M$ is an integral cohomology $\CP$ up to degree $k+2$, it follows that this map is in fact an isomorphism $\bZ \to \bZ$ in either case.
		Thus, since $x^l$ and $x^{l+1}$ are a generators of $H^{2l}(M)$ and $H^{2l+2}(M)$, respectively, it follows that $\lambda x^{l+1} = \pm x^{l+1}$, and hence $\lambda = \pm 1$.
		Therefore, $M$ has the cohomology of $\CP^{n/2}$, and the result follows. 
	\end{proof}

\section{Fundamental groups for maximal symmetry rank}\label{sec:pi1}

	In this section, we prove our last main result, which deals with fundamental groups of manifolds with $\pr{2}$ and maximal symmetry rank:
	
	\begin{theorem}[Theorem \ref{result:pi1}]
		Let $M^n$ be a closed, connected Riemannian manifold with $\pr{2}$ and $\gT^r$ symmetry with $r = \left\lfloor \frac{n+1}{2} \right\rfloor$. If $\pi_1(M)$ is non-trivial, then one of the following occurs:
		\begin{enumerate}
			\item $M$ is homotopy equivalent to $\RP^n$ or a lens space, or
			\item $M$ has dimension six and, if additionally the universal cover is $S^3 \times S^3$, then $\pi_1(M) \cong \bZ_l \times \bZ_m$ for some $l,m\geq 1$.
		\end{enumerate}
	\end{theorem}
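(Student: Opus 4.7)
The plan is to pass to the universal cover $\tilde M$ and reduce to the simply-connected classifications already established. Since $\pr{2}$ implies positive Ricci curvature for $n\geq 3$, Myers' theorem forces $\pi_1(M)$ to be finite. Moreover, because $\pi_1(\gT^r)\cong \bZ^r$ is torsion-free while $\pi_1(M)$ is finite, the connecting map $\pi_1(\gT^r)\to \pi_1(M)$ induced by an orbit map is trivial, so the $\gT^r$-action lifts to an effective, isometric $\gT^r$-action on $\tilde M$ that commutes with the free deck action of $\pi_1(M)$. Applying Theorem \ref{thm:odd-dim} (for odd $n$) and Theorem \ref{result:k=2} (for even $n$) to $\tilde M$ then leaves four mutually overlapping cases: (i) $\tilde M$ is diffeomorphic to $S^n$; (ii) $\tilde M$ is homeomorphic to $\CP^{n/2}$ with $n\geq 6$ even; (iii) $n=6$ and $\chi(\tilde M)=0$; or (iv) $n=4$ and $\tilde M$ is equivariantly diffeomorphic to an equivariant connected sum $\#_b\CP^2$ with $b\geq 1$.

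In the homotopy sphere cases (i) and the $S^4$ possibility in (iv), I would invoke the standard group-cohomology arguments for free finite group actions on cohomology spheres commuting with a maximal torus, in the spirit of the positive sectional curvature setting (as cited in Theorem \ref{thm:CohomologyToHomotopy}), to deduce that $\pi_1(M)$ must be cyclic. The Cohomology-to-Homotopy Lemma then gives that $M$ is homotopy equivalent to $S^n$, $\RP^n$, or a lens space, and since $\pi_1(M)$ is non-trivial, conclusion (1) follows.

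In case (ii) and in the connected-sum subcase of (iv), the plan is to rule out non-trivial $\pi_1(M)$ altogether via a fixed-point analysis. By effectiveness and maximality of $r$, the $\gT^r$-action on $\tilde M$ has isolated fixed points whose count equals $\chi(\tilde M)>0$, and in suitable coordinates each isotropy representation is the faithful diagonal representation $(z_1,\ldots,z_r)\mapsto \diag(z_1,\ldots,z_r)$ (as in the proofs in Sections \ref{sec:dim6} and \ref{sec:dim8+}, and via Orlik--Raymond data in dimension four). Any $\gamma\in\pi_1(M)$ commuting with $\gT^r$ acts freely on the finite set $\tilde M^{\gT^r}$ while sending each fixed point to one with $\gT^r$-equivariantly isomorphic isotropy representation. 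The key claim, to be verified using the constraints coming from the Connectedness Lemma and the structure theorems cited, is that the weight multisets at distinct fixed points are pairwise inequivalent as multisets of characters of $\gT^r$. This forces $\gamma$ to fix every $\gT^r$-fixed point, contradicting freeness unless $\pi_1(M)$ is trivial.

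The remaining case (iii) automatically yields the dimension-six conclusion in (2), except for the specific identification $\pi_1(M)\cong\bZ_l\times\bZ_m$ when $\tilde M\cong S^3\times S^3$. For this subcase, the plan is to use Proposition \ref{prop:dim6nofixedpt} and Corollary \ref{cor:fixedptsS3xS3}: the orbit space $\tilde M/\gT^3$ is homeomorphic to $S^3$, all non-trivial isotropies are circles, and each such circle fixes a single torus $\gT^2$. The commuting $\pi_1$-action permutes the finite collection of singular orbits while preserving their circle-isotropy labels; combined with the free-action hypothesis, one aims to embed $\pi_1(M)$ into a free $\gT^2$-subaction of $\gT^3$ (arising from the Hopf actions on the two $S^3$-factors) to obtain $\pi_1(M)\cong\bZ_l\times\bZ_m$. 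This last identification, bridging the combinatorial orbit-space data with the geometric freeness condition, is the step I expect to be the main obstacle.
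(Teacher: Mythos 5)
Your high-level framework matches the paper: pass to the universal cover $\tilde M$, lift the torus action, use Myers to get $\pi_1(M)$ finite, then split according to the simply connected classification. However, there are genuine gaps in each of your three cases.

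\textbf{Sphere case.} You claim that ``standard group-cohomology arguments for free finite group actions on cohomology spheres'' force $\pi_1(M)$ to be cyclic. This is false as stated: the quaternion group $Q_8$ acts freely on $S^3$, so a free action on a single cohomology sphere does not yield cyclicity (it only gives periodic cohomology). The paper's argument instead uses that the circle fixed-point component $N^{n-2}\subset\tilde M$ is \emph{also} a cohomology sphere on which $\Gamma$ acts freely, and a finite group acting freely on both $S^n$ and $S^{n-2}$ (spheres of opposite parity) must be cyclic, by \cite[Lemma 1.8]{FrankRongWang13}. Your sketch never introduces $N$, and without it the argument does not close.

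\textbf{$\CP^{n/2}$ and connected-sum cases.} You propose a weight-multiset argument: show that the isotropy weight multisets at distinct $\gT^r$-fixed points are pairwise inequivalent, hence $\gamma$ must fix each of them, contradicting freeness. You flag this as needing verification, and indeed it is a real gap: there is no a priori reason the weights must be distinct at different fixed points for an arbitrary $\pr 2$ metric, and this is substantially harder than what is needed. The paper's route is much shorter. For a cohomology $\CP^{n/2}$, $\Gamma$ acts freely on both $\tilde M$ (Euler characteristic $\tfrac n 2 + 1$) and the codimension-two component $N$, which by Su's theorem is a cohomology $\CP^{n/2-1}$ (Euler characteristic $\tfrac n 2$); the order of $\Gamma$ divides both, which are coprime, so $\Gamma$ is trivial. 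For $\tilde M\cong\CP^2\#\cdots\#\CP^2$ with $\chi\geq 3$, one takes two adjacent circle isotropies $\gS^1_1,\gS^1_2$ whose unique $S^2$ components contain $\gT^2$-fixed pairs $\{f_0,f_1\}$ and $\{f_1,f_2\}$; a non-trivial $\gamma$ acting freely on $\{f_0,f_1\}$ sends $f_1\mapsto f_0$, while acting freely on $\{f_1,f_2\}$ it sends $f_1\mapsto f_2$, a contradiction.

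\textbf{$S^3\times S^3$ case.} You correctly identify this as the hardest step and outline embedding $\Gamma$ into a free $\gT^2$-subaction, but the embedding is the whole point and your sketch does not construct it. The paper does so explicitly: using Corollary \ref{cor:fixedptsS3xS3}, pick a circle isotropy $\gS^1\subset\gT^3$ whose fixed-point set is a single torus $F$; the quotient $\gT^2=\gT^3/\gS^1$ acts simply transitively on $F$, so fixing $x\in F$ gives an identification $\gT^2\cong F$. Since $\Gamma$ commutes with $\gT^3$ it preserves $F$, and writing $\gamma\cdot x = g_\gamma\cdot x$ defines a map $\Gamma\to\gT^2$; one checks directly (using commutativity of both actions and that $\gT^2$ is abelian) that $\gamma\mapsto g_\gamma$ is a homomorphism, and freeness of the deck action makes it injective. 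Hence $\Gamma$ is a finite subgroup of $\gT^2$, i.e.\ $\bZ_l\times\bZ_m$.

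In short, your case analysis is the right scaffolding, but each branch needs the specific device — the second cohomology sphere $N$, the Euler-characteristic coprimality, the overlapping fixed-point pairs, and the explicit homomorphism $\Gamma\to\gT^2$ — that the paper supplies.
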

	
	\begin{proof}
		We may assume $n \geq 3$, since otherwise the condition $\pr{2}$ is vacuous. 
		We set $\Gamma = \pi_1(M)$ and pullback the metric and the torus action to the universal cover $\tilde M$. We get a $\gT^r$-action on $\tilde M$ that commutes with the free action of $\Gamma$ on $\tilde M$ by deck transformations. 
		Note that $\Gamma$ is finite by Myers' theorem.
		
		Following the proof{s of Theorem 1.1 in \cite{Mouille22b} and Theorem \ref{result:k=2} } in the simply connected case, we arrive at one of the following situations:
		\begin{enumerate}[(i)]
			\item There exists $\gS^1 \subseteq \gT^r$ such that the fixed-point set $\tilde M^{\gS^1}$ has a unique component $N^{n-2}$ with codimension two. Moreover, $\tilde M$ is a cohomology $S^n$ or $\CP^{\frac n 2}$. \label{item:SnCPn}
			
			\item The universal cover is $S^3 \times S^3$, and the torus $\gT^3$ contains a circle $\gS^1$ whose fixed-point set is connected and diffeomorphic to $\gT^2$. \label{item:S3xS3}
			
			\item The universal cover is $\CP^2 \# \ldots \# \CP^2$, and the torus $\gT^2$ contains $t = \chi(M)$ distinct isotropy groups $\gS^1_1,\ldots, \gS^1_t$ whose fixed-point sets have a unique $S^2$ component.\label{item:CP2connsum}
		\end{enumerate}
		
		Indeed, {Lemma 6.2 in \cite{Mouille22b} implies that (i) holds in the odd-dimensional case. For the even-dimensional case, } if the $\gT^r$ action has no fixed point, then (\ref*{item:S3xS3}) holds by Corollary  \ref{cor:fixedptsS3xS3}. 
		If instead the $\gT^r$ action has a fixed point, then the existence statements of (\ref*{item:SnCPn}) and (\ref*{item:CP2connsum}) were established in the proofs of the simply connected cases and the uniqueness statements follow from the generalizations of Frankel's theorem provided by Part 2 of Theorem \ref{thm:connprinc} and Lemma \ref{lem:disjointspheres}, respectively.
		
		Suppose first we are in Case (\ref*{item:SnCPn}) and that $\tilde M$ is a cohomology $S^n$, then $N$ is a cohomology $S^{n-2}$ by the sum of Betti numbers estimate (Lemma \ref{lem:Conner}). 
		Since the actions by $\Gamma$ and $\gT^r$ commute, $\Gamma$ acts freely on both $S^n$ and $S^{n-2}$. It follows that $\Gamma$ is cyclic (see \cite[Lemma 1.8]{FrankRongWang13}) in general and moreover $\bZ_2$ if $n$ is even. By \cite[Theorem 3.4]{KennardSamaniSearle21preprint}, it follows that $M$ is homotopy equivalent to real projective space or a lens space, as required.
		
		Next suppose we are in Case (\ref*{item:SnCPn}) and that $\tilde M$ is a cohomology $\CP^{\frac n 2}$. By \cite[Theorem 7.2]{Su63}, $N$ is a cohomology $\CP^{\frac{n}{2} - 1}$. Once again, $\Gamma$ acts freely on both of these manifolds. In particular, the order of $\Gamma$ divides both of their Euler characteristics. Since these differ by one, $\Gamma$ is trivial, a contradiction. %Hence this case does not occur.
		
		Next suppose we are in Case (\ref*{item:S3xS3}). 
		Let $\gS^1$ be a non-trivial isotropy group in $\gT^3$ whose fixed-point set $F \defeq M^{\gS^1}$ consists of a unique $2$-dimensional torus. 
		Fix any $x \in F$, and consider the diffeomorphism $\gT^2 \defeq \gT^3/\gS^1 \to F$ given by $g \mapsto g \cdot x$. 
		Using the inverse of this map, we obtain another a map 
		$\Gamma \to \gT^2$ denoted by $\gamma \mapsto g_\gamma$ and determined by the property that $\gamma\cdot x = g_\gamma \cdot x$.
		We claim that $\Gamma \to \gT^2$ is a group homomorphism.
		Given $\alpha, \beta \in \Gamma$, we find that
		\[
			g_{\alpha\beta} \cdot x = (\alpha\beta)\cdot x = \alpha\cdot(\beta\cdot x) = \alpha \cdot (g_\beta \cdot x) = g_\beta \cdot (\alpha \cdot x) = g_\beta\cdot(g_\alpha\cdot x) = (g_\alpha g_\beta)\cdot x,
		\]
		where we have used that $\gT^2$ is abelian and that the $\gT^2$- and $\Gamma$-actions commute. 
		By the injectivity of the map $\gT^2 \to F$, we find that $g_{\alpha \beta} = g_\alpha g_\beta$, and hence the map $\Gamma \to \gT^2$ is indeed a group homomorphism. 
		Since $\Gamma$ acts freely, this map is an injection. 
		Hence $\Gamma$ may be regarded as a subgroup of $\gT^2$, and it follows then that $\Gamma$ is either cyclic or a two-fold product of cyclic groups.
		
		Finally, suppose we are in Case (\ref*{item:CP2connsum}). 
		It suffices to prove that $\chi(M) = 2$, since then $M = S^4$ and we are in the situation of (\ref*{item:SnCPn}). 
		We suppose then that $\chi(M) \geq 3$ and seek a contradiction. 
		After possibly relabeling, we may assume that the first two circles, $\gS^1_1$ and $\gS^1_2$, have the property that their respective $S^2$ fixed-point components contain $\{f_0,f_1\}$ and $\{f_1,f_2\}$, respectively, where $f_0$, $f_1$, and $f_2$ are distinct isolated fixed points of the $\gT^2$-action (see Section \ref{sec:dim4}). 
		Since the free $\Gamma$-action commutes with the $\gT^2$-action, $\Gamma$ acts freely on both of the sets $\{f_0,f_1\}$ and $\{f_1,f_2\}$, which contradicts the assumption that $\Gamma$ is non-trivial.
	\end{proof}

\bibliographystyle{alpha}
\bibliography{bibfile}

\begin{thebibliography}{GWY19}

\bibitem[AQZ]{AmannQuastZarei20preprint}
Manuel Amann, Peter Quast, and Masoumeh Zarei.
\newblock The flavour of intermediate Ricci and homotopy when studying submanifolds of symmetric spaces.
\newblock preprint, arXiv:2010.15742.

\bibitem[Ber66]{Berger66}
Marcel Berger.
\newblock Trois remarques sur les vari\'et\'es riemanniennes \`a courbure
  positive.
\newblock {\em C. R. Acad. Sci. Paris S\'er. A-B}, 263:A76--A78, 1966.

\bibitem[Bre72]{Bredon72}
Glen~E. Bredon.
\newblock {\em Introduction to Compact Transformation Groups}, volume~46.
\newblock Academic Press, 1972.

\bibitem[Bur19]{Burdick19}
Bradley~Lewis Burdick.
\newblock Ricci-positive metrics on connected sums of projective spaces.
\newblock {\em Differential Geom. Appl.}, 62:212--233, 2019.

\bibitem[Bur20]{Burdick20}
Bradley~Lewis Burdick.
\newblock Metrics of positive {Ricci} curvature on the connected sums of
  products with arbitrarily many spheres.
\newblock {\em Ann. Global Anal. Geom.}, 58(4):433--476, 2020.

\bibitem[Cha19]{Chahine19}
Yousef~K. Chahine.
\newblock Volume estimates for tubes around submanifolds using integral
  curvature bounds.
\newblock {\em J. Geom. Anal.}, 2019.

\bibitem[Con57]{Conner57}
P.~E. Conner.
\newblock On the action of the circle group.
\newblock {\em Michigan Math. J.}, 4(3):241--247, 1957.

\bibitem[CW]{CrowleyWraith20preprint}
Diarmuid Crowley and David Wraith.
\newblock Intermediate curvatures and highly connected manifolds.
\newblock preprint, arXiv:1704.07057.

\bibitem[Dav]{Davis98preprint}
James~F. Davis.
\newblock ${S}_3 \times {S}_3$ acts freely on ${S}^3 \times {S}^3$.
\newblock preprint, arXiv:math/9806027.

\bibitem[Dea11]{Dearricott11}
Owen Dearricott.
\newblock A 7-manifold with positive curvature.
\newblock {\em Duke Math. J.}, 158(2), 2011.

\bibitem[DGM22]{DVGAM22}
Miguel {Dom\'inguez-V\'azquez}, David {Gonz\'alez-\'Alvaro}, and Lawrence
  Mouill\'e.
\newblock Infinite families of manifolds of positive $k^{\rm th}$-intermediate
  {Ricci} curvature with $k$ small.
\newblock {\em Math. Ann.}, 2022.

\bibitem[DGR]{DVGARVinprep}
Miguel {Dom\'inguez-V\'azquez}, David {Gonz\'alez-\'Alvaro}, and Alberto
  {Rodr\'iguez-V\'azquez}.
\newblock Normal homogeneous spaces with positive second intermediate {Ricci}
  curvature.
\newblock in preparation.

\bibitem[ES19]{EscherSearle19}
Christine Escher and Catherine Searle.
\newblock Non-negatively curved 6-manifolds with almost maximal symmetry rank.
\newblock {\em J. Geom. Anal.}, 29(1):1002--1017, 2019.

\bibitem[ES21]{EscherSearle21}
Christine Escher and Catherine Searle.
\newblock Torus actions, maximality, and non-negative curvature.
\newblock {\em J. Reine Angew. Math.}, 2021(780):221--264, 2021.

\bibitem[FG16]{FangGrove16}
Fuquan Fang and Karsten Grove.
\newblock Reflection groups in non-negative curvature.
\newblock {\em J. Differential Geom.}, 102(2):179--205, 16.

\bibitem[FGT17]{FangGroveThorbergsson17}
Fuquan Fang, Karsten Grove, and Gudlaugur Thorbergsson.
\newblock Tits geometry and positive curvature.
\newblock {\em Acta Mathematica}, 218(1):1--53, 2017.

\bibitem[FR04]{FangRong04}
Fuquan Fang and Xiaochun Rong.
\newblock Positively curved manifolds with maximal discrete symmetry rank.
\newblock {\em Amer. J. Math.}, 126(2):227--245, 2004.

\bibitem[FR05]{FangRong05}
Fuquan Fang and Xiaochun Rong.
\newblock Homeomorphism classification of positively curved manifolds with
  almost maximal symmetry rank.
\newblock {\em Math. Ann.}, 332(1):81--101, 2005.

\bibitem[FRW13]{FrankRongWang13}
Philipp Frank, Xiaochun Rong, and Yusheng Wang.
\newblock Fundamental groups of positively curved manifolds with symmetry.
\newblock {\em Math. Ann.}, 355(4):1425--1441, 2013.

\bibitem[{Gal}14]{GalazGarcia14}
Fernando {Galaz-Garc\'ia}.
\newblock {\em {\rm A Note on Maximal Symmetry Rank, Quasipositive Curvature,
  and Low Dimensional Manifolds. In:} Geometry of Manifolds with Non-negative
  Sectional Curvature}, volume 2110 of {\em Lecture Notes in Mathematics},
  pages 45--55.
\newblock Springer International Publishing, 2014.

\bibitem[GH82]{GroveHalperin82}
Karsten Grove and Stephen Halperin.
\newblock Contributions of rational homotopy theory to global problems in
  geometry.
\newblock {\em Publ. Math. Inst. Hautes Etudes Sci.}, 56(1):171--177, 1982.

\bibitem[GK14]{Galaz-GarciaKerin14}
Fernando {Galaz-Garc\'ia} and Martin Kerin.
\newblock Cohomogeneity-two torus actions on non-negatively curved manifolds of
  low dimension.
\newblock {\em Math. Z.}, 276(1-2):133--152, 2014.

\bibitem[GKR20]{GGKR20}
Fernando {Galaz-Garc\'ia}, Martin Kerin, and Marco Radeschi.
\newblock Torus actions on rationally elliptic manifolds.
\newblock {\em Math. Z.}, 2020.

\bibitem[GKS20]{GoetteKerinShankar20}
Sebastian Goette, Martin Kerin, and Krishnan Shankar.
\newblock Highly connected 7-manifolds and non-negative sectional curvature.
\newblock {\em Ann. of Math. (2)}, 191(3):829, 2020.

\bibitem[GKS21]{GoetteKerinShankar21}
Sebastian Goette, Martin Kerin, and Krishnan Shankar.
\newblock Fake lens spaces and non-negative sectional curvature.
\newblock In {\em Differential geometry in the large}, volume 463 of {\em
  London Math. Soc. Lecture Note Ser.}, pages 285--290. Cambridge Univ. Press,
  Cambridge, 2021.

\bibitem[GKW]{GorodskiKollrossWilking21preprint}
Claudio Gorodski, Andreas Kollross, and Burkhard Wilking.
\newblock Actions on positively curved manifolds and boundary in the orbit
  space.
\newblock preprint, arXiv:2112.00513.

\bibitem[Gro02]{Grove02}
Karsten Grove.
\newblock Geometry of, and via, symmetries.
\newblock In {\em Conformal, Riemannian and Lagrangian Geometry}, pages 31--53.
  American Mathematical Society, 2002.

\bibitem[GS94]{GroveSearle94}
Karsten Grove and Catherine Searle.
\newblock Positively curved manifolds with maximal symmetry-rank.
\newblock {\em J. Pure Appl. Algebra}, 91:137--142, 1994.

\bibitem[GS97]{GroveSearle97}
K.~Grove and C.~Searle.
\newblock {Differential topological restrictions curvature and symmetry}.
\newblock {\em J. Differential Geom.}, 47(3):530--559, 1997.

\bibitem[GS11]{GalazGarciaSearle11}
Fernando {Galaz-Garc\'ia} and Catherine Searle.
\newblock Low-dimensional manifolds with non-negative curvature and maximal
  symmetry rank.
\newblock {\em Proc. Amer. Math. Soc.}, 139(7):2559--2564, 2011.

\bibitem[GS14]{GalazGarciaSearle14}
Fernando {Galaz-Garc\'ia} and Catherine Searle.
\newblock Nonnegatively curved 5-manifolds with almost maximal symmetry rank.
\newblock {\em Geom. Topol.}, 18(3):1397--1435, 2014.

\bibitem[GVZ11]{GroveVerdianiZiller11}
Karsten Grove, Luigi Verdiani, and Wolfgang Ziller.
\newblock An exotic ${T}_1\mathbb{S}^4$ with positive curvature.
\newblock {\em Geom. Funct. Anal.}, 21(3):499--524, 2011.

\bibitem[GW14]{GroveWilking14}
Karsten Grove and Burkhard Wilking.
\newblock A knot characterization and 1-connected nonnegatively curved
  4-manifolds with circle symmetry.
\newblock {\em Geom. Topol.}, 18(5):3091--3110, 2014.

\bibitem[GW18]{GuijarroWilhelm18}
Luis Guijarro and Frederick Wilhelm.
\newblock Focal radius, rigidity, and lower curvature bounds.
\newblock {\em Proc. Lond. Math. Soc.}, 116(6):1519--1552, 2018.

\bibitem[GW20]{GuijarroWilhelm20}
Luis Guijarro and Frederick Wilhelm.
\newblock Restrictions on submanifolds via focal radius bounds.
\newblock {\em Math. Res. Lett.}, 27(1):115--139, 2020.

\bibitem[GW22]{GuijarroWilhelm22}
Luis Guijarro and Frederick Wilhelm.
\newblock A softer connectivity principle.
\newblock {\em Comm. Anal. Geom.}, 30(5):1093--1119, 2022.

\bibitem[GWY19]{GroveWilkingYeager19}
Karsten Grove, Burkhard Wilking, and Joseph Yeager.
\newblock Almost non-negative curvature and rational ellipticity in
  cohomogeneity two.
\newblock {\em Ann. Inst. Fourier (Grenoble)}, 69(7):2921--2939, 2019.

\bibitem[GWZ08]{GroveWilkingZiller08}
Karsten Grove, Burkhard Wilking, and Wolfgang Ziller.
\newblock Positively curved cohomogeneity one manifolds and {3-Sasakian}
  geometry.
\newblock {\em J. Differential Geom.}, 78(1), 2008.

\bibitem[Ham06]{Hambleton06}
Ian Hambleton.
\newblock Some examples of free actions on products of spheres.
\newblock {\em Topology}, 45(4):735--749, 2006.

\bibitem[HK89]{HsiangKleiner89}
Wu-Yi Hsiang and Bruce Kleiner.
\newblock On the topology of positively curved 4-manifolds with symmetry.
\newblock {\em J. Differential Geom.}, 29(3):615--621, 1989.

\bibitem[KSS]{KennardSamaniSearle21preprint}
Lee Kennard, Elahe~Khalili Samani, and Catherine Searle.
\newblock Positive curvature and discrete abelian symmetry.
\newblock preprint, arXiv:2110.13345.

\bibitem[KW17]{KennardWylie17}
Lee Kennard and William Wylie.
\newblock Positive weighted sectional curvature.
\newblock {\em Indiana Univ. Math. J.}, 66(2):419--462, 2017.

\bibitem[KWWa]{KennardWiemelerWilking22preprint}
Lee Kennard, Michael Wiemeler, and Burkhard Wilking.
\newblock Positive curvature, torus symmetry, and matroids.
\newblock {\em preprint}, arXiv:2212.08152.

\bibitem[KWWb]{KennardWiemelerWilking21preprint}
Lee Kennard, Michael Wiemeler, and Burkhard Wilking.
\newblock Splitting of torus representations and applications in the {Grove}
  symmetry program.
\newblock {\em preprint}, arXiv:2106.14723.

\bibitem[Mou]{MouilleWebpage}
Lawrence Mouill\'e.
\newblock Intermediate {Ricci} curvature,
  \url{https://sites.google.com/site/lgmouille/research/intermediate-ricci-curvature}.

\bibitem[Mou22a]{Mouille22a}
Lawrence Mouill\'e.
\newblock Local symmetry rank bound for positive intermediate {Ricci}
  curvatures.
\newblock {\em Geom. Dedicata}, 216(23), 2022.

\bibitem[Mou22b]{Mouille22b}
Lawrence Mouill\'e.
\newblock Torus actions on manifolds with positive intermediate {Ricci}
  curvature.
\newblock {\em J. Lond. Math. Soc.}, 106(4):3792--3821, 2022.

\bibitem[MY67]{MontgomeryYang67}
D.~Montgomery and C.~T. Yang.
\newblock Differentiable transformation groups on homotopy spheres.
\newblock {\em Michigan Math. J.}, 14(1), 1967.

\bibitem[OR70]{OrlikRaymond70}
Peter Orlik and Frank Raymond.
\newblock Actions of the torus on 4-manifolds. i.
\newblock {\em Trans. Amer. Math. Soc.}, 152(2):531--559, 1970.

\bibitem[Pera]{Perelman02preprint}
Grisha Perelman.
\newblock The entropy formula for the {Ricci} flow and its geometric
  applications.
\newblock preprint, arXiv:math/0211159.

\bibitem[Perb]{Perelman03preprint1}
Grisha Perelman.
\newblock Finite extinction time for the solutions to the {Ricci} flow on
  certain three-manifolds.
\newblock preprint, arXiv:math/0307245.

\bibitem[Perc]{Perelman03preprint2}
Grisha Perelman.
\newblock Ricci flow with surgery on three-manifolds.
\newblock preprint, arXiv:math/0303109.

\bibitem[Pet16]{Petersen16}
Peter Petersen.
\newblock {\em Riemannian Geometry}, volume 171 of {\em Graduate Texts in
  Mathematics}.
\newblock Springer International Publishing, $3^\mathrm{rd}$ edition, 2016.

\bibitem[PW]{PetersenWilhelm08preprint}
Peter Petersen and Frederick Wilhelm.
\newblock An exotic sphere with positive sectional curvature.
\newblock preprint, arXiv:0805.0812.

\bibitem[Rei]{Reiser22preprint}
Philipp Reiser.
\newblock Metrics of positive {Ricci} curvature on simply-connected manifolds
  of dimension $6k$.
\newblock preprint, arXiv:2210.15610.

\bibitem[Rei23]{Reiser23}
Philipp Reiser.
\newblock Generalized surgery on {R}iemannian manifolds of positive {R}icci
  curvature.
\newblock {\em Trans. Amer. Math. Soc.}, 376(5):3397--3418, 2023.

\bibitem[Ron02]{Rong02}
Xiaochun Rong.
\newblock Positively curved manifolds with almost maximal symmetry rank.
\newblock {\em Geom. Dedicata}, 95(1):157--182, 2002.

\bibitem[RWa]{ReiserWraith23preprint1}
Philipp Reiser and David Wraith.
\newblock A generalization of the {Perelman} gluing theorem and applications.
\newblock {\em preprint}, arXiv:2308.06996.

\bibitem[RWb]{ReiserWraith23preprint2}
Philipp Reiser and David Wraith.
\newblock Positive intermediate {Ricci} curvature on connected sums.
\newblock {\em preprint}, arXiv:2310.02746.

\bibitem[RWc]{ReiserWraith22preprint2}
Philipp Reiser and David Wraith.
\newblock Positive intermediate {Ricci} curvature on fibre bundles.
\newblock {\em preprint}, arXiv:2211.14610.

\bibitem[RW23]{ReiserWraith23}
Philipp Reiser and David~J. Wraith.
\newblock Intermediate {R}icci curvatures and {G}romov's {B}etti number bound.
\newblock {\em J. Geom. Anal.}, 33(12):Paper No. 364, 20, 2023.

\bibitem[Sha98]{Shankar98}
Krishnan Shankar.
\newblock On the fundamental groups of positively curved manifolds.
\newblock {\em J. Differential Geom.}, 49(1), 1998.

\bibitem[She90]{Shen90}
Zhongmin Shen.
\newblock {\em Finite topological type and vanishing theorems for {Riemannian}
  manifolds}.
\newblock PhD thesis, State University of New York at Stony Brook, 1990.

\bibitem[She93]{Shen93}
Zhongmin Shen.
\newblock On complete manifolds of nonnegative kth-{Ricci} curvature.
\newblock {\em Trans. Amer. Math. Soc.}, 338(1):289--310, 1993.

\bibitem[Smi38]{Smith38}
P.A. Smith.
\newblock {Transformations of a finite period}.
\newblock {\em Ann. of Math.}, 39:127--164, 1938.

\bibitem[Smi44]{Smith44}
P.A. Smith.
\newblock {Permutable periodic transformations}.
\newblock {\em Proc. Nat. Acad. Sci. U.S.A.}, 30:105--108, 1944.

\bibitem[Su63]{Su63}
J.~C. Su.
\newblock Transformation groups on cohomology projective spaces.
\newblock {\em Trans. Amer. Math. Soc.}, 106(2):305--318, 1963.

\bibitem[Sug82]{Sugahara82}
K.~Sugahara.
\newblock The isometry group and the diameter of a {Riemannian} manifold with
  positive curvature.
\newblock {\em Math. Japon.}, 27:631--634, 1982.

\bibitem[SY91]{ShaYang91}
Ji-Ping Sha and DaGang Yang.
\newblock Positive {Ricci} curvature on the connected sums of ${S}^n\times
  {S}^m$.
\newblock {\em J. Differential Geom.}, 33(1):127--137, 1991.

\bibitem[VZ18]{VerdianiZiller18}
Luigi Verdiani and Wolfgang Ziller.
\newblock Seven dimensional cohomogeneity one manifolds with nonnegative
  curvature.
\newblock {\em Math. Ann.}, 371(1-2):655--662, 2018.

\bibitem[Wil]{Wilking08preprint}
Burkhard Wilking.
\newblock Torus actions on manifolds with quasi-positive curvature.
\newblock unpublished manuscript, prepared by M. Kerin.

\bibitem[Wil97]{Wilhelm97}
Frederick Wilhelm.
\newblock On intermediate {Ricci} curvature and fundamental groups.
\newblock {\em Illinois J. Math.}, 41(3):488--494, 1997.

\bibitem[Wil03]{Wilking03}
Burkhard Wilking.
\newblock Torus actions on manifolds of positive sectional curvature.
\newblock {\em Acta Math.}, 191(2):259--297, 2003.

\bibitem[Wil06]{Wilking06}
Burkhard Wilking.
\newblock Positively curved manifolds with symmetry.
\newblock {\em Ann. of Math.}, 163(2):607--668, 2006.

\bibitem[Wil07]{Wilking07}
Burkhard Wilking.
\newblock A duality theorem for {Riemannian} foliations in nonnegative
  sectional curvature.
\newblock {\em Geom. Funct. Anal.}, 17(4):1297--1320, 2007.

\bibitem[Wol11]{Wolfson11}
Jon Wolfson.
\newblock Manifolds with $k$-positive {Ricci} curvature.
\newblock In {\em Variational Problems in Differential Geometry}, pages
  182--201. Cambridge University Press, 2011.

\bibitem[Wra07]{Wraith07}
David~J. Wraith.
\newblock New connected sums with positive {Ricci} curvature.
\newblock {\em Ann. Global Anal. Geom.}, 32(4):343--360, 2007.

\bibitem[WW22]{WalshWraith22}
Mark Walsh and David~J. Wraith.
\newblock H-space and loop space structures for intermediate curvatures.
\newblock {\em Commun. Contemp. Math.}, 2022.

\bibitem[Xia97]{Xia97}
Changyu Xia.
\newblock A generalization of the classical sphere theorem.
\newblock {\em Proc. Amer. Math. Soc.}, 125(1):255--258, 1997.

\bibitem[Zil06]{Ziller06}
Wolfgang Ziller.
\newblock Examples of manifolds with non-negative sectional curvature.
\newblock {\em Surv. Differ. Geom.}, 11(1):63--102, 2006.

\end{thebibliography}

\end{document}